\documentclass[a4paper,11pt]{article}
\usepackage[margin=1in]{geometry}

\usepackage{graphicx,url,subfig}
\RequirePackage[OT1]{fontenc}
\RequirePackage{amsthm,amsmath,amssymb,amscd}
\RequirePackage[numbers]{natbib}
\RequirePackage[colorlinks,citecolor=blue,urlcolor=blue]{hyperref}
\usepackage{graphics}
\usepackage{enumerate}
\usepackage{datetime}
\usepackage{etex}
\usepackage[normalem]{ulem} 
\usepackage{soul} 
\makeatother
\numberwithin{equation}{section}
\allowdisplaybreaks
\usepackage{mathrsfs}

\usepackage{amssymb, latexsym, amsmath, graphics, fullpage, epsfig, amsthm, relsize, pgf, tikz, amsfonts, makeidx, latexsym, ifthen, calc}
\usepackage[colorlinks,citecolor=blue,urlcolor=blue]{hyperref}
\usetikzlibrary{arrows}
\usepackage{mathrsfs}
\usepackage[shortlabels]{enumitem}
\usepackage{tikz}
\usepackage{amsmath}
\usetikzlibrary{arrows}
\usetikzlibrary{shadows.blur}
\usepackage{pgfplots}
\usepackage{mwe} 
\usepackage{multirow}
\usepackage{dcolumn}
\newcolumntype{2}{D{.}{}{2.0}}
\usepackage{multicol}
\numberwithin{equation}{section}


\theoremstyle{plain}
\newtheorem{theorem}{Theorem}[section]
\newtheorem{corollary}[theorem]{Corollary}
\newtheorem{lemma}[theorem]{Lemma}
\newtheorem{proposition}[theorem]{Proposition}

\theoremstyle{definition}
\newtheorem{definition}[theorem]{Definition}

\newtheorem{assumption}[theorem]{Assumption}

\newtheorem{remark}[theorem]{Remark}

\newtheorem{example}[theorem]{Example}

\newcommand{\E}{\mathbb{E}}
\newcommand{\W}{\dot{W}}

\newcommand{\ud}{\ensuremath{\mathrm{d} }}
\newcommand{\Ceil}[1]{\left\lceil #1 \right\rceil}

\newcommand{\Norm}[1]{\left\|  #1   \right\|}

\newcommand{\R}{\mathbb{R}}


\begin{document}

\title{Interpolating the Stochastic Heat and Wave Equations\\ with Time-independent Noise:\\
Solvability and Exact Asymptotics\footnote{Department of Mathematics and
Statistics, Auburn University, Auburn, Alabama.}}
\author{Le Chen\footnote{Email: \url{le.chen@auburn.edu}.}~
	and
Nick Eisenberg\footnote{Emails: \url{nze0019@auburn.edu};~\url{nickeisenberg@gmail.com}.}}
\date{\today}

\maketitle

\begin{abstract}
	\noindent In this article, we study a class of stochastic partial differential equations with
	fractional differential operators subject to some time-independent multiplicative Gaussian noise.
	We derive sharp conditions, under which a unique global $L^p(\Omega)$-solution exits for all $p\ge
	2$. In this case, we derive exact moment asymptotics following the same strategy in a recent work
	by Balan {\it et al} \cite{BCC21}. In the case when there exits only a local solution, we
	determine the precise deterministic time, $T_2$, before which a unique $L^2(\Omega)$-solution
	exits, but after which the series corresponding to the $L^2(\Omega)$ moment of the solution blows
	up. By properly choosing the parameters, results in this paper interpolate the known results for
	both stochastic heat and wave equations.  \end{abstract}

\noindent {\em MSC 2010:} Primary 60H15; Secondary 60H07, 37H15

\vspace{1mm}

\noindent {\em Keywords:}
Stochastic partial differential equations; {\it Caputo} derivatives; {\it Riemann-Liouville}
fractional integral; fractional Laplacian; {\it Malliavin} calculus; {\it Skorohod} integral; exact
moment asymptotics; time-independent Gaussian noise; white noise; global and local solutions.

\tableofcontents
\section{Introduction}
In this paper we study the following {\it stochastic partial differential equation} (SPDE) with
fractional differential operators:
\begin{equation}
	\label{E:SPDE}
	\begin{cases}
		\left(\partial^b_t + \frac{\nu}{2}(-\Delta)^{a/2}\right)\: u(t,x) = I^r_t \left[\sqrt{\theta}\: u(t,x)\: \dot W(x) \right] & \text{$x\in \R^d$, $t>0$}, \\
		u(0,\cdot) = 1                                                                                                             & b \in (0,1],               \\
		u(0,\cdot) = 1, \quad \partial_t u(0,\cdot) = 0                                                                            & b \in (1,2),
	\end{cases}
\end{equation}
where $a \in (0,2)$, $b \in (0,2)$, $r\ge 0$, $\nu > 0$ and $\theta>0$. Here the noise $W =
\left\{ W(\phi) : \phi \in \mathcal{D}(\R^d)  \right\}$ is a centered and time-independent
Gaussian process, defined on a complete probability space $(\Omega, \mathcal{F}, P)$, with mean zero
and covariance
\begin{align*}
		\mathbb{E}\left[ W(\phi) W(\psi) \right] = \int_{\R^d} \mathcal{F}\phi(\xi)\overline{\mathcal{F}\psi(\xi)} \mu(\ud \xi) =: \langle \phi, \psi \rangle_{\mathcal{H}},
\end{align*}
where $\mu$ refers to the {\it spectral measure}, which is assumed to be a nonnegative and
nonnegative definite tempered measure on $\R^d$. Let $\gamma$ be the Fourier transform of $\mu$ (see Section \ref{SS:Malliavin}),
which is also a nonnegative and nonnegative measure on $\R^d$ thanks to Bochner's theorem.
Throughout the paper, we use $\mathcal{F}\phi(\xi) = \int_{\R^d} \exp(-ix \xi) \phi(x) \ud
x$ to denote the Fourier transform of a test function $\phi$.

In \eqref{E:SPDE}, $(-\Delta)^{a/2}$ refers to the {\it fractional Laplacian} of order $a$,
$\partial^b_t$ denotes the {\it Caputo fractional differential operator}
\begin{align*}
	\partial^b_tf(t) :=
	\begin{cases}
		\dfrac{1}{\Gamma(m-b)} \int_0^t \ud \tau \dfrac{f^{m}(\tau)}{(t-\tau)^{b+1-m}} & \text{if } m-1 < b < m, \bigskip \\
		\dfrac{\ud^m}{\ud t^m}f(t)                                                     & \text{if } b=m,
	\end{cases}
\end{align*}
where $m$ is an integer, and $I^r_t$ refers to the {\it Riemann-Liouville fractional integral} of order $r>0$
\begin{align*}
	I^r_tf(t) := \frac{1}{\Gamma(r)} \int_0^t (t-s)^{r-1}f(s)\ud s, \quad \text{for } t>0,
\end{align*}
with the convention that when $r=0$, $I^0_t = \text{Id}$ reduces to the identity operator. The
fundamental solutions to \eqref{E:SPDE} consist of a triplet:
\begin{align*}
	\left\{ Z_{a,b,r,\nu,d}(t,x),\:  Z^*_{a,b,r,\nu,d}(t,x), \: Y_{a,b,r,\nu,d}(t,x) : t >0, x \in \R^d \right\}.
\end{align*}
Each member of the triplet can be expressed explicitly in terms of the {\it Fox H-function} which is
much more complicated than the fundamental function for either the heat or wave equation; See
Theorem 4.1 of \cite{CHN19}. Since we are interested in the constant one initial condition (and zero
initial velocity when $b>1$), Theorem 4.1 ({\it ibid.}) implies that the corresponding solution to
the homogeneous equation (i.e. the solution when there is no driving source) is equal to the
constant one. Hence, by setting $G(t,x) := Y_{a,b,r,\nu,d}(t,x)$, \eqref{E:SPDE} can be written as
the following stochastic integral equation:
\begin{equation}
	\label{E:spdeint}
	u(t,x) = 1 + \sqrt{\theta}\int_0^t \left( \int_{\R^d}G(t-s,x-y)u(s,y) W(\delta y) \right) \ud s,
\end{equation}
where the stochastic integral is in the {\it Skorohod} sense; see Definition \ref{D:Sol} below.  In
the following, the fundamental solution will exclusively refer to $G(t,x)$, which is indeed a smooth
function for $x\ne 0$. Our results rely on the following assumption for the nonnegativity of
$G(t,x)$:

\begin{assumption}[Nonnegativity]
	\label{A:Nonnegative}
	Assume that the fundamental solution $G(t,x)$ is nonnegative for all $t>0$ and $x\in\R^d$.
\end{assumption}

\begin{remark}
	\label{R:Nonnegative}
	Thanks to Theorem 4.6 of \cite{CHN19} (see also Theorem 3.1 of \cite{CHHH17} for the case when $r=0$),
	we have the following four groups of sufficient conditions \footnote{Note that when $d\ge 1$,
		$b=1$ and $a\in (0,2]$, part (1) of \cite[Theorem 4.6]{CHN19} says that the fundamental solution
		$Y$, which is the fundamental solution $G$ in this paper, is nonnegative provided $r=0$ or
	$r>1$. Indeed, because in this case $Z$ is always nonnegative, for $r>0$, $Y$ as a fractional
	integral of $Z$ (see (4.5), {\it ibid.}),  $Y$, or our $G$, should also be nonnegative. We thank
	Guannan Hu who pointed out to us this observation.}, under either group of which $G(t,\cdot)$ is
	nonnegative (see Figure \ref{F:Nonnegative} for an illustration) :
	\begin{enumerate}
		\item $d \ge 1$, $b\in(0,1]$, $a\in(0,2]$, $r\ge0$;
		\item $1\le d \le 3$, $1<b<a\le 2$, $r>0$;
		\item $1 \le d \le 3$, $1 < b=a < 2$, $r > \dfrac{d+3}{2}-b$.
	\end{enumerate}
\end{remark}

\begin{figure}[htpb]
	\centering
	\begin{tikzpicture}[scale=2.2]
	\draw [->] (-0.3,0) -- (2.3,0) node [right] {$a$};
	\draw [->] (0,-0.3) -- (0,2.2) node [above] {$b$};
	\draw [densely dashed,gray] (1,0) -- (1,2) (2,0) -- (2,2) (-0.4,1) -- (2.5,1) (0,2) -- (2,2);
	\draw (1,0.05) -- (1,-0.05) node [below] {$1$};
	\draw (2,0.05) -- (2,-0.05) node [below] {$2$};
	\draw (0.05,1) -- (-0.05,1) node [left] {$1$};
	\draw (0.05,2) -- (-0.05,2) node [left] {$2$};
	\filldraw [fill=red!30, opacity=0.5] (0,0) rectangle (2,1);
	\filldraw [fill=blue!30, opacity=0.5] (1,1) -- (2,1) -- (2,2);
	\draw [thick] (1.03,1.03) -- (1.97,1.97);
	\draw [thick] ([shift=(0:0.05)]1,1) arc (0:90:0.05);
	\draw [thick] ([shift=(180:0.05)]2,2) arc (180:270:0.05);
	\node at (2.3,1.5) [rotate=90, anchor=center] {$1\le d\le 3$};
	\node at (2.3,0.45)[rotate=90, anchor=center] {$d\ge 1$};
	\draw [thick] (0.03,1) -- (2,1);
	\draw [thick] ([shift=(-90:0.05)]0,1) arc (-90:90:0.05);
	\node at (1.73,1.35) [rotate=45, anchor=center] {$r\ge 0$};
	\node at (1,0.45) {$r\ge 0$};
	\node at (1.5,1.55) [rotate=45,anchor=south] {$r>\frac{d+3}{2}-b$};
	\end{tikzpicture}
	\caption{Illustration of the sufficient conditions (Remark \ref{R:Nonnegative}) for
		$G(t,\cdot)$ to be nonnegative.}
	\label{F:Nonnegative}
\end{figure}
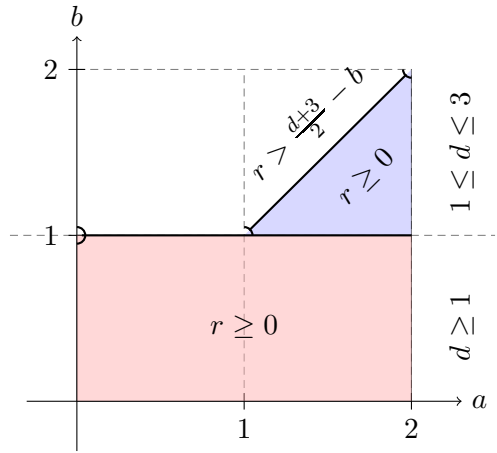

Regarding the noise, we formulate the following assumption in order to cover the Riesz kernel case,
the fractional noise and a mixture of them:
\begin{assumption}[Noise]
	\label{A:Noise}
	Let $k \in \{1, \cdots , d\}$ and partition the $d$-coordinates of $x=(x_1, \cdots, x_d)$ into $k$
	distinct groups of size $d_i$ so that $d_1 + \cdots + d_k = d$. Denote $x_{(i)} = (x_{i_1} ,
	\cdots , x_{i_{d_i}})$ to be the coordinates in the $i^{th}$ partition. Assume that the
	correlation function of the Gaussian noise is given by
	\begin{equation}
		\label{E:corfunction}
		\gamma(x) = \prod_{i=1}^k |x_{(i)}|^{-\alpha_i} \qquad \text{with $\alpha_i \in (0, d_i)$.}
	\end{equation}
	Define $\alpha := \sum_{i=1}^k \alpha_i$.
\end{assumption}

\begin{remark}[Spectral density and decomposition]
	\label{R:Noise}
	Recall that the {\it spectral density} of $\gamma$ from \eqref{E:corfunction}, which by definition is
	$\mathcal{F}\gamma$, takes the following form:
	\begin{equation}
	\label{E:specfunction}
	\mu (\ud \xi) = \varphi(\xi) \ud \xi \quad \text{with} \quad
	\varphi(\xi) = \prod_{i=1}^k C_{\alpha_i,d_i} |\xi_{(i)}|^{-(d_i-\alpha_i)}.
	\end{equation}
	Moreover, in the derivations below, we need to find a nonnegative and nonnegative definite $K$
	such that $\gamma = K*K$ where `$*$' denotes the spatial convolution. Indeed, one can choose
	\begin{align}
	\label{E:K}
	K(x) =\prod_{i=1}^k \beta_{\alpha_i,d_i} \left|x_{(i)}\right|^{-(d_i+\alpha_i)/2}.
	\end{align}
	The two constants in both \eqref{E:specfunction} and \eqref{E:K} are defined as
	\begin{equation}
	\label{E:noiseconstant}
	C_{\alpha,d}     = \pi^{-d/2}2^{-\alpha} \frac{\Gamma((d-\alpha)/2)}{\Gamma(\alpha/2)} \quad \text{and}\quad
	\beta_{\alpha,d} = \pi^{-d/4}\frac{\Gamma((d+\alpha)/4)}{\Gamma((d-\alpha)/4)}\sqrt{ \frac{\Gamma((d-\alpha)/2)}{\Gamma(\alpha/2)}}.
	\end{equation}
\end{remark}

\begin{example}[Noises]
	\label{Eg:NoiseEg}
	We have the following special cases: (1) Setting $k=1$ in \eqref{E:corfunction} and
	\eqref{E:specfunction} recovers the Riesz kernel case. In this case,
	\begin{equation}\label{E:riesznoise}
	\gamma(x)  = |x|^{-\alpha}, \quad
	\varphi(x) = C_{\alpha,d}|x|^{-(d-\alpha)} \quad \text{and} \quad
	K(x)       = \beta_{\alpha,d}|x|^{-(d+\alpha)/2}.
	\end{equation}
	\noindent (2) Setting $k=d$ in \eqref{E:corfunction} and \eqref{E:specfunction} recovers the
	time-independent fractional noise. The corresponding SHE with such noise was earlier studied by Hu
	\cite{Hu01Heat}. For this noise, we have that
	\begin{equation}\label{E:fracnoise}
	\gamma(x)    = \prod_{i=1}^d |x_i|^{-\alpha_i}, \quad
	\varphi(\xi) = \prod_{i=1}^d C_{\alpha_i,1} |\xi_i|^{-(1-\alpha_i)}\quad\text{and}\quad
	K(x)         = \prod_{i=1}^d \beta_{\alpha_i,1}|x_i|^{-(1+\alpha_i)/2}.
	\end{equation}
\end{example}
\bigskip

In a recent work by Balan {\it et al} \cite{BCC21}, the same equation as \eqref{E:SPDE}, but
exclusively for the {\it stochastic wave equation} (SWE), namely, the case when $a=b=\nu=2$ and
$r=0$, has been studied, where both the well-posedness and the exact moment asymptotics have been
obtained.  The corresponding {\it stochastic heat equation} (SHE), namely, the case when $a=2$,
$b=\nu=1$ and $r=0$, has been earlier studied by Hu \cite{Hu01Heat}, but only for the well-posedness
and exclusively for the fractional noise \eqref{E:fracnoise}.  The corresponding moment asymptotics
have been obtained by X. Chen \cite{Chen17AIHP} as a special case by setting $\alpha_0=0$. One may
check Remark 1.9 of Balan {\it et al} \cite{BCC21} for the explicit expressions in terms of notation
of the current paper. In this paper, by working on a more general class of SPDEs, we are able to
interpolate the asymptotics for both SWE and SHE; see Section \ref{SS:EgAsym} below for more
details. Moreover, we give the sharp conditions under which there exists only a local $L^2(\Omega)$
solution.

The moment asymptotics obtained by X. Chen, such as those in \cite{Chen17AIHP,Chen19AIHP}, rely
crucially on the Feynman-Kac representation of the moments of the solution. However, whenever $b\ne
1$, especially for the case when $b\in(1,2)$, we are not aware of any such Feynman-Kac formula for
the moments. Instead, in the recent work by Balan {\it et al} \cite{BCC21}, this difficulty has been
overcome by studying the Weiner chaos expansion of the solution. In this paper, we follow the same
strategy laid out by Balan {\it et al} ({\it ibid.}). The challenge comes from the much more
involved parametric form of the fundamental solution.  \bigskip

Now let us state the main results of this paper. The first main result deals with the well-posedness
of the SPDE \eqref{E:SPDE} (or \eqref{E:spdeint}) as stated in the following theorem. For this, we
need to introduce the following variational constant (see Section \ref{SS:Variation} for more
details):
\begin{align}
	\label{E:M1}
	\mathcal{M}_{a,d}\left(f\right) := \sup_{g \in \mathcal{F}_a} \left\{ \left\langle g^2 * g^2, f \right\rangle^{1/2}_{L^2(\R^d)} - \frac{1}{2}\: \mathcal{E}_a(g,g) \right\}.
\end{align}
We use the convention that $\mathcal{M}_{a}\left(f\right) := \mathcal{M}_{a,d}\left(f\right)$ when
the dimension is clear from the context, and $\mathcal{M}_{a}:=\mathcal{M}_{a}(\gamma)$, where
$\gamma$ is defined in \eqref{E:corfunction}.

\begin{theorem}[Solvability]
	\label{T:solvable}
	Assume that both Assumptions \ref{A:Nonnegative} and \ref{A:Noise} hold.

	\noindent (1) \eqref{E:SPDE} has a unique (global) solution $u(t,x)$ in $L^p(\Omega)$ for all $p \ge 2$,
	$t>0$, and $x \in \R^d$ provided that
	\begin{equation}
		\label{E:alpha}
		0< \alpha < \min\left( \frac{a}{b}\left[2(b+r)-1\right], 2a, d \right).
	\end{equation}

	\noindent (2) Otherwise, if
	\begin{align}
		\label{E:critical}
		r\in \left[0,1/2\right] \qquad \text{and} \qquad
		0<\alpha = \frac{a}{b}[2(b+r)-1] \le d,
	\end{align}
	then \eqref{E:SPDE} has a local solution in the sense that
	\begin{enumerate}
		\item[(2-i)] For any $p\ge 2$,\eqref{E:SPDE} has a unique solution $u(t,x)$ in $L^p(\Omega)$ for
			all $p \ge 2$ and $x \in \R^d$, but only for $t\in (0, T_p)$ where
			\begin{align}
				\label{E:Tp}
				T_p := \dfrac{\nu^{\alpha/a}}{2 \theta (p-1) \mathcal{M}_a^{(2a-\alpha)/a}}.
			\end{align}
		\item[(2-ii)] For any $t>T_2$, the series \eqref{E:2mom} below diverges, that is, the
			$L^2(\Omega)$-solution $u(t,x)$ to \eqref{E:SPDE} does not exist whenever $t>T_2$.
	\end{enumerate}
\end{theorem}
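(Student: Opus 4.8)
The plan is to carry out the Wiener chaos program of \cite{BCC21}, adapted to the present fundamental solution $G$. First I would record that, in the sense of Definition~\ref{D:Sol}, a process $u$ solves \eqref{E:spdeint} in $L^2(\Omega)$ if and only if it admits a chaos expansion $u(t,x)=\sum_{n\ge0}I_n\big(f_n(\cdot;t,x)\big)$ whose kernels are forced by the equation: iterating \eqref{E:spdeint}, the kernel $f_n(\cdot;t,x)$ is the symmetrization in $y_1,\dots,y_n$ of
\begin{equation*}
\widetilde f_n(y_1,\dots,y_n;t,x)=\theta^{n/2}\int_{\{t>s_1>\cdots>s_n>0\}}G(t-s_1,x-y_1)\prod_{j=2}^{n}G(s_{j-1}-s_j,\,y_{j-1}-y_j)\,\mathrm ds_1\cdots\mathrm ds_n.
\end{equation*}
In particular uniqueness is automatic; existence in $L^2(\Omega)$ is equivalent to $\sum_{n\ge0}n!\,\|f_n(\cdot;t,x)\|_{\mathcal H^{\otimes n}}^2<\infty$; and, by Nelson's hypercontractivity on a fixed chaos (so that $\|I_n(f_n)\|_{L^p(\Omega)}\le(p-1)^{n/2}(n!)^{1/2}\|f_n\|_{\mathcal H^{\otimes n}}$), existence in $L^p(\Omega)$ holds once $\sum_{n\ge0}(p-1)^{n/2}\big(n!\,\|f_n(\cdot;t,x)\|_{\mathcal H^{\otimes n}}^2\big)^{1/2}<\infty$. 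So the whole theorem reduces to a sharp two-sided estimate for the sequence $n!\,\|f_n(\cdot;t,x)\|_{\mathcal H^{\otimes n}}^2$, noting that each $\widetilde f_n$ is a nonnegative function, since it is built from the nonnegative kernel $G$ (Assumption~\ref{A:Nonnegative}).

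For that estimate I would pass to the spatial Fourier transform. With $g_t(\xi):=\mathcal F G(t,\cdot)(\xi)=t^{\,b+r-1}E_{b,b+r}\!\big(-\tfrac{\nu}{2}|\xi|^a t^b\big)$ (a Mittag--Leffler function), the convolution structure of $\widetilde f_n$ telescopes under the substitution $\eta_j:=\xi_j+\cdots+\xi_n$, so that $\|\widetilde f_n(\cdot;t,x)\|_{\mathcal H^{\otimes n}}^2$ equals, up to an explicit constant, an integral of $\big(\int_{\{t>s_1>\cdots>s_n>0\}}\prod_{j=1}^{n}g_{s_{j-1}-s_j}(\eta_j)\,\mathrm ds_1\cdots\mathrm ds_n\big)^2$ against $\prod_{j=1}^{n}\varphi(\eta_j-\eta_{j+1})$, with $\varphi$ the spectral density \eqref{E:specfunction} and $\eta_{n+1}:=0$. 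Using the scaling form of $g_t$ above together with the homogeneities $\gamma(c\,\cdot)=c^{-\alpha}\gamma$, $\varphi(c\,\cdot)=c^{-(d-\alpha)}\varphi$, after rescaling each $s_j$ by $t$ and each $\eta_j$ by $(2/(\nu t^b))^{1/a}$ all dependence on $t$ and $\nu$ is pulled out and one is left with a purely numerical $n$-fold integral. The outcome I expect is a bound
\begin{equation*}
n!\,\|f_n(\cdot;t,x)\|_{\mathcal H^{\otimes n}}^2\ \le\ C\,\frac{\big(K\,t^{\,\beta}\big)^{n}}{\Gamma(\kappa n+1)},\qquad \beta:=2(b+r)-\tfrac{b\alpha}{a},\qquad K:=\frac{2\theta\,\mathcal M_a^{(2a-\alpha)/a}}{\nu^{\alpha/a}},
\end{equation*}
where $\kappa:=2(b+r)-1-\tfrac{b\alpha}{a}\;(=\beta-1)$ is positive precisely on the subcritical range $\alpha<\tfrac ab[2(b+r)-1]$ and vanishes in the critical case $\alpha=\tfrac ab[2(b+r)-1]$ (which then forces $r\le\tfrac12$ and $\beta=1$, so the bound is geometric). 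The crucial, and hardest, point is that the exponential rate of $n!\,\|f_n\|^2$ is \emph{exactly} $K t^{\,\beta}$, with the variational constant $\mathcal M_a=\mathcal M_a(\gamma)$ of \eqref{E:M1} appearing raised to the power $(2a-\alpha)/a$ dictated by the homogeneities of $\mathcal E_a$ and $\gamma$; in the critical case this is a genuine two-sided statement $n!\|f_n(\cdot;t,x)\|^2\asymp(Kt)^{n}$, the lower bound being available because $\widetilde f_n\ge0$ and $\gamma\ge0$ leave no cancellation. The two remaining constraints in \eqref{E:alpha} enter more cheaply: $\alpha<d$ is what makes $\gamma$ (equivalently its spectral density) a genuine locally integrable kernel admitting the factorization \eqref{E:K}, and $\alpha<2a$ is the Dalang-type condition that keeps the first chaos finite, since a direct estimate gives $\int_0^{t}g_s(\xi)\,\mathrm ds\asymp|\xi|^{-a}$ at high frequency and hence $\|f_1(\cdot;t,x)\|_{\mathcal H}^2=\theta\int_{\R^d}\big(\int_0^tg_s(\xi)\,\mathrm ds\big)^2\mu(\mathrm d\xi)<\infty$ exactly when $\alpha<2a$.

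With the key estimate in hand, the three assertions follow. For (1): in the subcritical range all three constraints in \eqref{E:alpha} hold, so $\kappa>0$ and $\Gamma(\kappa n+1)$ dominates $(Kt^{\beta})^{n}$ for every fixed $t$, whence $\sum_n(p-1)^{n/2}(n!\|f_n\|^2)^{1/2}<\infty$ for all $t>0$ and $p\ge2$; a standard Picard/Banach fixed-point argument as in \cite{BCC21} then yields the unique global $L^p(\Omega)$-solution. For (2-i): in the critical case the bound is geometric, so $\sum_n(p-1)^{n/2}(n!\|f_n\|^2)^{1/2}$ has root $\le\sqrt{(p-1)Kt}$ and hence converges for $t<\big((p-1)K\big)^{-1}=T_p$ (one checks $((p-1)K)^{-1}$ is exactly \eqref{E:Tp}), while a refined lower bound on the $p$-th moment, using the nonnegativity of the kernels, shows $u(t,x)\notin L^p(\Omega)$ for $t\ge T_p$. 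For (2-ii): take $p=2$ and $t>T_2=1/K$; the two-sided estimate gives $n!\,\|f_n(\cdot;t,x)\|^2\ge c\,(Kt)^{n}$, which does not tend to $0$ since $Kt>1$, so the series \eqref{E:2mom} diverges and no $L^2(\Omega)$-solution exists.

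The main obstacle is exactly the sharp two-sided control of $\|\widetilde f_n\|_{\mathcal H^{\otimes n}}^2$ and, with it, of $n!\,\|f_n\|_{\mathcal H^{\otimes n}}^2$: one must handle the $n$-fold time-simplex integral of Mittag--Leffler factors simultaneously with the $nd$-dimensional frequency integral against $\varphi$ (the two are genuinely coupled, since it is the polynomial high-frequency decay of $g_t$ that makes the frequency integral converge), show that the exponential rate is precisely $\big(2\theta\,\mathcal M_a^{(2a-\alpha)/a}\nu^{-\alpha/a}\,t^{\beta}\big)^{n}$ and nothing coarser, and verify that passing from $\widetilde f_n$ to its symmetrization $f_n$ costs only a sub-exponential factor. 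This precision is what pins down the exact value of $T_p$ rather than merely some finite survival time. Compared with \cite{BCC21}, the only genuinely new difficulty is computational, as $G$ is a Fox $H$-function instead of the elementary wave kernel; the scaling structure, and hence both the dichotomy (the sign of $\kappa$, i.e.\ whether $\alpha$ is strictly below or equal to $\tfrac ab[2(b+r)-1]$) and the formula for $T_p$, is governed by the single exponent $\beta=2(b+r)-b\alpha/a$.
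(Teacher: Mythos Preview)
Your overall framework---chaos expansion, hypercontractivity, Fourier representation, scaling to extract the exponent $\beta=2(b+r)-b\alpha/a$, and the dichotomy on $\kappa=\beta-1$---matches the paper exactly, and your identification of the critical constant $K=2\theta\nu^{-\alpha/a}\mathcal M_a^{(2a-\alpha)/a}$ and of $T_p=\big((p-1)K\big)^{-1}$ is correct.

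The gap is in the ``crucial, hardest point'' you flag but do not resolve: how to prove the sharp exponential rate $\tfrac{1}{n}\log\big(n!\,\|\widetilde f_n(\cdot;1,x)\|_{\mathcal H^{\otimes n}}^2\big)\to\log K$ in the critical case. Scaling and nonnegativity alone do not pin down the constant; what the paper actually does is pass through a \emph{Laplace transform in time}. Because each $t\mapsto H_n(t,\vec x)$ (the $K$-smoothed version of $n!\,\widetilde f_n$) is nondecreasing under Assumption~\ref{A:Nonnegative}, the elementary inequality $2\int_0^\infty e^{-2t}H^2(t)\,\mathrm dt\le\big(\int_0^\infty e^{-t}H(t)\,\mathrm dt\big)^2$ (Lemma~\ref{L:doubleExp}) converts $\int_0^\infty e^{-t}\|\widetilde f_n(\cdot;t)\|^2\,\mathrm dt$ into the explicit frequency integral
\[
T_n(\nu,a)=\int_{(\R^d)^n}\Big(\sum_{\sigma}\prod_{k}\big(1+\tfrac\nu2\big|\textstyle\sum_{j\ge k}\xi_{\sigma(j)}\big|^a\big)^{-1}\Big)^2\mu(\mathrm d\vec\xi),
\]
whose logarithmic rate $\tfrac1n\log\big(T_n/(n!)^2\big)\to\log\rho_{\nu,a}(\gamma)$ with $\rho_{\nu,a}=\nu^{-\alpha/a}\mathcal M_a^{(2a-\alpha)/a}$ is a Bass--Chen--Rosen type result (Theorem~\ref{T:Variation}), not something that falls out of scaling. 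The matching lower bound comes not merely from ``no cancellation'' but from computing $\mathbb E[J_n(\tau,\widetilde\tau)]$ for two independent mean-one exponentials and applying Cauchy--Schwarz, which yields $T_n/(n!)^2\le\Gamma(\beta n/2+1)^2\|\widetilde f_n(\cdot;1)\|^2$. Together with scaling this gives the exact limit (Lemma~\ref{L:2ineq}), and Cauchy--Hadamard then delivers both (2-i) and (2-ii). Without these two devices your outline has no mechanism that produces $\mathcal M_a$ rather than some cruder constant.

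One small overclaim: in (2-i) you assert that a lower bound shows $u(t,x)\notin L^p(\Omega)$ for $t\ge T_p$. The paper (and the method) only establishes divergence of the $L^2$ series \eqref{E:2mom} for $t>T_2$; for $p>2$ hypercontractivity is a one-sided tool and gives no blow-up information past $T_p$.
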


%


The second main result of the paper is about the moment asymptotics. We use $\Norm{\cdot}_p$ to
denote the $L^p(\Omega)$ moments.

\begin{theorem}
	\label{T:asym}
	Under Assumptions \ref{A:Nonnegative} and \ref{A:Noise}, if condition \eqref{E:alpha} holds, then we have that
	\begin{equation}
		\label{E:momentasym}
		\begin{aligned}
			\lim_{t_p \to \infty} t_p^{-\beta} \log \Norm{u(t,x)}_p = & \left(\frac{1}{2}\right)\left(\frac{2a}{2a(b+r)- b\alpha} \right)^\beta \\
                                                                & \times \left(\theta\nu^{-\alpha/a} \mathcal{M}_a^{\frac{2a-\alpha}{a}}\right)^{\frac{a}{2a(b+r)-b\alpha-a}}\left(2(b+r)-\frac{b\alpha}{a}-1\right),
		\end{aligned}
	\end{equation}
	where
	\begin{align}
		\label{E:beta-tp}
		\beta := \dfrac{2(b+r)-\dfrac{b\alpha}{a}}{ 2(b+r)-\dfrac{b\alpha}{a} -1 } \qquad \text{and} \qquad
		t_p   := (p-1)^{1-1/\beta} \: t.
	\end{align}
\end{theorem}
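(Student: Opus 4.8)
The plan is to follow the strategy of Balan \emph{et al.} \cite{BCC21}, deriving matching upper and lower bounds on the Wiener chaos expansion of $u(t,x)$ and then optimizing the resulting exponents. Write $u(t,x) = \sum_{n\ge 0} I_n(f_n(\cdot,t,x))$ for the chaos decomposition, where $f_n$ is the symmetrization of the kernel built from $n$ copies of the fundamental solution $G$. Then $\Norm{u(t,x)}_p^2$ is comparable (up to $p$-dependent combinatorial factors absorbed through hypercontractivity, which is exactly why $t_p$ carries the factor $(p-1)^{1-1/\beta}$) to $\sum_n n!\, \Norm{\widetilde f_n}_{\mathcal H^{\otimes n}}^2$. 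The first step is therefore to obtain sharp two-sided estimates on $\alpha_n(t) := n!\,\Norm{\widetilde f_n(\cdot,t,x)}_{\mathcal H^{\otimes n}}^2$. The upper bound comes from writing the norm as an $n$-fold time integral over the simplex $0<s_1<\cdots<s_n<t$ of a product of $\mathcal H$-inner products of shifted $G$'s, using the factorization $\gamma = K*K$ from \eqref{E:K}, and then invoking the variational bound encoded in $\mathcal{M}_{a,d}$: each spatial convolution integral is controlled by $\langle g^2*g^2,\gamma\rangle^{1/2} \le \mathcal{M}_a\,\mathcal{E}_a(g,g)^{1/2} + \cdots$, applied with $g$ related to the Fourier profile of $G$. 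Tracking the scaling of $G$ in $t$ (the self-similarity $G(t,x) = t^{-\beta_0} G(1, t^{-\gamma_0}x)$ for the appropriate exponents read off from the Fox $H$-function representation) converts the simplex integral into something of the form $\left(\frac{(\theta\,\nu^{-\alpha/a}\,\mathcal M_a^{(2a-\alpha)/a})^{?}\, t^{\,2(b+r)-b\alpha/a}}{\Gamma(n(2(b+r)-b\alpha/a)+1)}\right)^n$ up to constants; the lower bound is obtained by restricting the simplex to a sub-region and choosing near-optimizers $g$ in the variational problem, giving the same leading behavior.

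The second step is the asymptotic analysis of the series $\sum_n \alpha_n(t)$ as $t\to\infty$. Once $\alpha_n(t) \asymp \bigl(C\, t^{\kappa}\bigr)^n / \Gamma(\kappa n + 1)^{?}$ with $\kappa = 2(b+r) - b\alpha/a$ and $C$ built from $\theta, \nu, \mathcal M_a$, one applies a Laplace/saddle-point estimate (or the known asymptotics of Mittag-Leffler-type series $\sum_n z^n/\Gamma(an+1) \sim$ exponential in $z^{1/a}$) to get $\log \sum_n \alpha_n(t) \sim c\, (C t^\kappa)^{1/(\kappa-1)}$. Matching $\kappa/(\kappa-1) = \beta$ from \eqref{E:beta-tp} fixes the power of $t$, and carefully collecting the constant $c$ — which is where the factor $\tfrac12 \left(\tfrac{2a}{2a(b+r)-b\alpha}\right)^\beta \left(2(b+r)-\tfrac{b\alpha}{a}-1\right)$ and the exponent $\tfrac{a}{2a(b+r)-b\alpha-a}$ on $\theta\nu^{-\alpha/a}\mathcal M_a^{(2a-\alpha)/a}$ come from — yields \eqref{E:momentasym}. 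The final step is to upgrade from the $n!\Norm{\widetilde f_n}^2$ series to the genuine $L^p(\Omega)$ moment: the lower bound uses that $\Norm{I_n(\widetilde f_n)}_p \ge \Norm{I_n(\widetilde f_n)}_2$, while the upper bound uses hypercontractivity $\Norm{I_n(\widetilde f_n)}_p \le (p-1)^{n/2}\Norm{I_n(\widetilde f_n)}_2$, which replaces $t^\kappa$ by $(p-1)\,t^\kappa$ and hence, after the $1/(\kappa-1)$ power, produces precisely the rescaled time $t_p = (p-1)^{1-1/\beta} t$; one then checks the two bounds coincide in the limit.

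The main obstacle I expect is the first step: establishing the \emph{sharp} constant in the two-sided bound on $\alpha_n(t)$, which requires (a) a precise scaling analysis of the Fox $H$-function fundamental solution $G$ — extracting the correct self-similarity exponents and the integrability of $G(1,\cdot)$ against $\gamma$ and in the energy $\mathcal E_a$ — and (b) showing that the variational constant $\mathcal M_{a,d}(\gamma)$ really is the optimal asymptotic rate, i.e. that the sub-simplex lower bound can be pushed to saturate the upper bound. Subsidiary technical points are justifying that the tail of the chaos series does not contribute (which is where condition \eqref{E:alpha}, ensuring $\kappa > 1$ and summability, is used), and controlling the dependence on the base point $x$ (the estimates should be uniform in $x$ by translation invariance of $\gamma$ and $G(t-s,x-y)$). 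Throughout, the condition $0<\alpha<\min(\tfrac ab[2(b+r)-1],2a,d)$ guarantees respectively: $\kappa-1>0$ so the exponent $\beta$ is finite and positive, $2a-\alpha>0$ so $\mathcal M_a$ is finite and the variational problem is well-posed, and $\alpha<d$ so $\gamma$ is locally integrable and the noise is genuinely a Riesz-type kernel.
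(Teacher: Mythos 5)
Your skeleton is the right one and matches the paper's overall strategy: chaos expansion, sharp asymptotics of $n!\,\Norm{\widetilde f_n}^2_{\mathcal H^{\otimes n}}$ governed by the variational constant $\mathcal M_a$, Mittag-Leffler-type series asymptotics to pass from the chaos rates to the moment growth, and a scaling that packages the $p$-dependence into $t_p$. One technical difference: the paper does not estimate $\Norm{\widetilde f_n(\cdot,0,t)}^2$ directly from the simplex integral. Instead it exploits the exact scaling $\Norm{\widetilde f_n(\cdot,0,t)}^2 = t^{\kappa n}\Norm{\widetilde f_n(\cdot,0,1)}^2$ (with $\kappa = 2(b+r)-b\alpha/a$) and the Laplace transform identity $\int_0^\infty e^{-t}\Norm{\widetilde f_n(\cdot,0,t)}^2\,\ud t = \Gamma(\kappa n+1)\Norm{\widetilde f_n(\cdot,0,1)}^2$, sandwiching the left-hand side between expressions involving $T_n(\nu,a) = \int |\mathcal F L_n|^2\,\ud\mu$ (via the $\gamma=K*K$ factorization and Lemma \ref{L:doubleExp}); the asymptotics $\frac1n\log(T_n/(n!)^2)\to\log\rho_{\nu,a}$ are then supplied by Theorem \ref{T:Variation}. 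That is a variant of your Step 1 but cleaner for a non-explicit $G$.

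However, there is a genuine gap in your lower bound. You propose to upgrade from the second moment to the $p$-th moment via the trivial inequality $\Norm{u(t,x)}_p \ge \Norm{u(t,x)}_2$, and assert that ``one then checks the two bounds coincide in the limit.'' They do not. For fixed $p>2$, the trivial inequality yields only
\begin{align*}
\liminf_{t\to\infty} t_p^{-\beta}\log\Norm{u(t,x)}_p
\ge (p-1)^{-(\beta-1)}\lim_{t\to\infty} t^{-\beta}\log\Norm{u(t,x)}_2 ,
\end{align*}
which is strictly smaller than the claimed limit because $\beta = \kappa/(\kappa-1) > 1$ under \eqref{E:alpha}. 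The hypercontractive upper bound $\Norm{u(t,x)}_p\le\Norm{u_{(p-1)\theta}(t,x)}_2$ gains a factor $(p-1)^{n/2}$ per chaos level, and the whole point of the lower bound is to show that this gain is asymptotically sharp --- something the trivial comparison $\Norm{\cdot}_p\ge\Norm{\cdot}_2$ cannot produce. The paper instead uses a Gaussian exponential-tilting (Cameron--Martin) inequality: for any $f\in\mathcal H$ and conjugate exponents $p,q$,
\begin{align*}
\Norm{u(t,0)}_p \ge \exp\!\left\{-\tfrac12(q-1)\Norm{f}^2_{\mathcal H}\right\}\Bigl|\sum_{n\ge0}\theta^{n/2}W_n(t,\mathcal F f)\Bigr|,
\end{align*}
(Proposition \ref{P:Wnprop1}), followed by a scaling $f\mapsto f_\tau$ with $\tau=(p-1)t$ (Proposition \ref{P:Wnprop2}) which converts the $(q-1)$ factor into exactly the $t_p^\beta$ scaling, and then a variational optimization over $f\in\mathcal H_+$ (Proposition \ref{P:LogU}, Lemma \ref{L:LogU}, Lemma \ref{L:logUnlb}) together with a truncation at $n_t=[ct]$ and an optimization over $\Norm{f}_{\mathcal H}=k$ and $c$. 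Assumption \ref{A:Nonnegative} enters here essentially, ensuring $W_n(t,\mathcal F f)\ge 0$ for $f\in\mathcal H_+$. Without this tilting mechanism your lower bound loses the $p$-dependence and cannot match the upper bound, so the proof does not close.
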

\begin{proof}
	We prove the matching upper bound \eqref{E:momentasym-upper} and the lower bound
	\eqref{E:momentasym-lower} of \eqref{E:momentasym} at the end of Sections \ref{S:upperbd} and
	\ref{S:lowerbd} below, respectively, which together prove \eqref{E:momentasym}.
\end{proof}

As a direct consequence of \eqref{E:momentasym}, one can send either $t$ or $p$ to infinity as follows:

\begin{corollary}
	\label{C:Lim}
	Under both Assumptions \ref{A:Nonnegative} and \ref{A:Noise}, if condition \eqref{E:alpha} holds,
	then
	\begin{itemize}
		\item[(1)] For all $p\ge 2$ fixed, it holds that
	\begin{align}
		\label{E:momentasym-t}
		\begin{aligned}
			\lim_{t \to \infty} t^{-\beta} \log \E\left(|u(t,x)|^p\right) = &  p(p-1)^{\frac{1}{2(b+r)-\frac{b\alpha}{a} -1}} \left(\frac{1}{2}\right)\left(\frac{2a}{2a(b+r)- b\alpha} \right)^\beta \\
                                                                      & \times \left(\theta\nu^{-\alpha/a} \mathcal{M}_a^{\frac{2a-\alpha}{a}}\right)^{\frac{a}{2a(b+r)-b\alpha-a}}\left(2(b+r)-\frac{b\alpha}{a}-1\right);
		\end{aligned}
	\end{align}
	\item[(2)] For all $t>0$ fixed, it holds that
	\begin{align}
		\label{E:momentasym-p}
		\begin{aligned}
			\lim_{p \to \infty} p^{-\beta} \log \E\left(|u(t,x)|^p\right) = & t^\beta \left(\frac{1}{2}\right)\left(\frac{2a}{2a(b+r)- b\alpha} \right)^\beta \\
                                                                        & \times \left(\theta\nu^{-\alpha/a} \mathcal{M}_a^{\frac{2a-\alpha}{a}}\right)^{\frac{a}{2a(b+r)-b\alpha-a}}\left(2(b+r)-\frac{b\alpha}{a}-1\right).
		\end{aligned}
	\end{align}
	\end{itemize}
\end{corollary}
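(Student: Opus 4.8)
The plan is to obtain both limits directly from the single identity \eqref{E:momentasym} in Theorem \ref{T:asym}, by unwinding the definitions of $t_p$ and of the $L^p(\Omega)$-norm. First I would record two elementary facts. Since $\Norm{u(t,x)}_p = \E\left(|u(t,x)|^p\right)^{1/p}$, we have $\log \Norm{u(t,x)}_p = \tfrac1p \log \E\left(|u(t,x)|^p\right)$; and from \eqref{E:beta-tp}, $t_p^{-\beta} = (p-1)^{-\beta(1-1/\beta)}\,t^{-\beta} = (p-1)^{1-\beta}\,t^{-\beta}$. Substituting both into \eqref{E:momentasym} rewrites that statement as
\begin{equation*}
	\lim_{t_p\to\infty}\ \frac{(p-1)^{1-\beta}}{p}\, t^{-\beta}\, \log \E\left(|u(t,x)|^p\right) = C,
\end{equation*}
where $C$ denotes the explicit constant on the right-hand side of \eqref{E:momentasym} (the product of the three factors displayed there).

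Next I would verify that $\beta > 1$, which is what makes the two limiting regimes transparent. Writing $X := 2(b+r) - \tfrac{b\alpha}{a}$, condition \eqref{E:alpha} gives $\tfrac{b\alpha}{a} < 2(b+r) - 1$, hence $X > 1$, so $\beta = X/(X-1) > 1$ and $1 - \tfrac1\beta = \tfrac1X \in (0,1)$. Consequently, for fixed $p \ge 2$ the factor $(p-1)^{1-1/\beta}$ is a fixed positive constant and $t_p\to\infty$ is equivalent to $t\to\infty$; while for fixed $t > 0$, since $1 - \tfrac1\beta > 0$, $t_p\to\infty$ is equivalent to $p\to\infty$. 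Both claims of the corollary are therefore special cases of the joint limit above.

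For part (1), I would fix $p \ge 2$ and let $t\to\infty$; multiplying the displayed identity by the positive constant $p\,(p-1)^{\beta-1}$ yields $\lim_{t\to\infty} t^{-\beta} \log\E\left(|u(t,x)|^p\right) = p\,(p-1)^{\beta-1}\,C$. Since $\beta - 1 = \tfrac1X = \bigl(2(b+r) - \tfrac{b\alpha}{a} - 1\bigr)^{-1}$, the prefactor $p\,(p-1)^{\beta-1}$ coincides with the one in \eqref{E:momentasym-t}, which gives \eqref{E:momentasym-t}. For part (2), I would fix $t > 0$ and let $p\to\infty$; the displayed identity gives $\log\E\left(|u(t,x)|^p\right) = p\,(p-1)^{\beta-1}\, t^\beta\,(C + o(1))$, hence
\begin{equation*}
	p^{-\beta} \log\E\left(|u(t,x)|^p\right) = \left(\frac{p-1}{p}\right)^{\beta-1} t^\beta\, (C + o(1)) \longrightarrow t^\beta\, C \qquad (p\to\infty),
\end{equation*}
because $(p-1)/p \to 1$; this is exactly \eqref{E:momentasym-p}.

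I do not expect any genuine obstacle here: the corollary is a purely algebraic consequence of Theorem \ref{T:asym}. The only point requiring (minor) care is the verification $\beta > 1$ from \eqref{E:alpha}, which is what guarantees that sending $t\to\infty$ with $p$ fixed — respectively $p\to\infty$ with $t$ fixed — does force $t_p\to\infty$, so that the two one-parameter limits are legitimate specializations of the limit along $t_p\to\infty$.
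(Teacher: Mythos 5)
Your proof is correct and takes exactly the route the paper implicitly intends: the paper offers no explicit argument for Corollary \ref{C:Lim} beyond calling it "a direct consequence" of \eqref{E:momentasym}, and your algebraic unwinding of $t_p$, the observation that $\log\Norm{u(t,x)}_p = \tfrac1p\log\E(|u(t,x)|^p)$, and the verification that $\beta>1$ (so that $t_p\to\infty$ is equivalent to $t\to\infty$ for fixed $p$, and to $p\to\infty$ for fixed $t$) is precisely the elaboration needed.
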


\bigskip

The paper is organized as follows. In Section \ref{S:SolvEx}, we first give some concrete examples,
where one can find many explicit formulas for either moment asymptotics in the case of global
solutions or the expressions for the critical time $T_p$ in the case of local solutions. Then in
Section \ref{S:Pre}, we present some preliminaries of the paper, including the {\it Skorohod}
integral, definition of the mild solution, and some asymptotics with corresponding variational
constants. We prove part (1) and part (2) of Theorem \ref{T:solvable} in Sections \ref{S:solvable}
and \ref{S:upperbd}, respectively.  The upper bound and lower bounds for \eqref{E:momentasym} are
established in Sections \ref{S:upperbd} and \ref{S:lowerbd}, respectively. Finally, in the appendix
--- Section \ref{S:Appendix}, we list a few proofs of results that are used in the paper.

\section{Examples on solvability and asymptotics}
\label{S:SolvEx}

In this section, we will give various examples to our main results. The cases with $b=1$ and $r=0$
are mostly known, which will be pointed out in the example below and will be used as test examples
for our results. To the best of our knowledge, all results in this paper for either $b\ne 1,2$ or
$r>0$ should be new.

\subsection{Examples on solvability}

In this part, we list some concrete examples regarding the solvability --- Theorem \ref{T:solvable}.

\begin{example}[SHE]
	\label{Eg:CriSHE}
	By setting $a=2$, $b=1$ and $r=0$ in \eqref{E:critical}, we obtain the following condition for SHE
	under which there only exits a local solution:
	\begin{align}
		\label{E:CrSHE}
		\alpha = 2 \le d .
	\end{align}
	Clearly, the fundamental solutions in this case are nonnegative for all $d\ge 1$.  Hence, the
	picture is slightly more complicated since we need to check all possible dimensions $d\ge 1$.  We
	illustrate possible cases in Figure \ref{F:SHE}. In particular, let us explain a few cases:
	\begin{enumerate}[(a)]
		\item When $d=2$, condition \eqref{E:CrSHE} says that the $2$-dimensional SHE driven by white
			noise has only a local $L^p(\Omega)$ solution. By applying \eqref{E:Tp} to this case, the critical time $T_p$ becomes
			\begin{align}
				\label{E:Tp-SHE2}
				T_p = \dfrac{\nu}{2\theta (p-1) \mathcal{M}_{2,2}(\delta_0)}, \qquad p\ge 2.
			\end{align}
			Note that in part 2) of Theorem 4.1 of Hu \cite{Hu02Chaos}, some lower and upper bounds for $
			T_2$ were obtained. More precisely, by setting additionally that $\theta=1$ and $\nu=1$, Hu
			({\it ibid.}) proved that when  $t<2$, an  $L^2(\Omega)$ solution exists but when $t>2\pi$,
			the second moment of the solution blows up. It is an interesting exercise to show that
			\begin{align*}
				2\le T_2 = \frac{1}{2 \mathcal{M}_{2,2}(\delta_0)} \le 2\pi, \qquad \text{where $d=2$.}
			\end{align*}
			This case is covered as a special time-independent case (i.e., $H_0=1$) by Chen {\it et al} \cite[Theorem 3.4
			and Remark 3.13]{CDOT20}.
		\item When $d\ge 3$, SHE driven by white noise no longer has any $L^2(\Omega)$-solution. Local
			solutions exist only when $\alpha=2$ and the noise is not white. The critical time $T_p$ takes
			the same expression as \eqref{E:Tp-SHE2} but one needs to replace $\delta_0$ by $\gamma$.
	\end{enumerate}
\end{example}

\begin{remark}
	\label{R:Hairer}
	Note that we use the Skorohod integral to interpret the multiplication of the solution with the
	noise in \eqref{E:SPDE}. Multiplication interpreted in this way is traditionally called the {\it
	Wick product} which is consistent to the {\it It\^o} or {\it Walsh} integral (see, e.g.,
	\cite{Minicourse09}) when the noise is white in time. One can also interpret this product as the
	usual product. In order to handle the singularities caused by this multiplication, one needs to
	carry out certain renormalization processes. In fact, for the standard SHE with white noise in
	$\R^d$ (i.e., $a=2$, $b=1$, $\alpha=d$ and $r=0$), Hairer and Labb\'e constructed pathwise
	solutions using the regularity structure for both cases $d=2,3$ in \cite{HairerLabbe15SHE2} and
	\cite{HairerLabbe18SHE3}, respectively. The relation between these two types of solution is left
	for the future work.
\end{remark}

\begin{figure}[htpb]
	\centering
	\begin{tikzpicture}[scale=0.9]
		\draw [thick,dashed] (2,0) -- (2,5.5) node [above] {$r=0$};
		\draw [->] (-0.3,0) -- (5.5,0) node [right] {$\alpha$};
		\draw [->] (0,-0.3) -- (0,5.5) node [above] {$d$};
		\foreach \x in {1,...,5}{
			\draw (\x,0.1)--++(0,-0.2) node [below] {$\x$};
			\draw (0.1,\x)--++(-0.2,0) node [left] {$\x$};
			\draw (\x,\x) + (-0.15,-0.15) rectangle +(0.15,0.15);
			\draw [thick] ([shift=(-90:0.1)]0,\x) arc (-90:90:0.1);
			\draw [thin, dotted] (\x,0) -- (\x, 5.4);
		}
		\filldraw (0, 1) + (0.1, -0.05) rectangle +(1,0.05);
		\filldraw (0, 2) + (0.1, -0.05) rectangle +(1.9,0.05);
		\filldraw (0, 3) + (0.1, -0.05) rectangle +(1.9,0.05);
		\filldraw (0, 4) + (0.1, -0.05) rectangle +(1.9,0.05);
		\filldraw (0, 5) + (0.1, -0.05) rectangle +(1.9,0.05);
		\draw (2,2) circle (0.11);
		\draw (2,3) circle (0.11);
		\draw (2,4) circle (0.11);
		\draw (2,5) circle (0.11);
		\draw (2, 3) + (0.1, -0.05) rectangle +(1,0.05);
		\draw (2, 4) + (0.1, -0.05) rectangle +(2,0.05);
		\draw (2, 5) + (0.1, -0.05) rectangle +(3,0.05);
	\end{tikzpicture}
	\hspace{2em}
	\begin{tikzpicture}[scale=0.9]
		\def\x{3}
		\draw (-0.4,-1.5+\x) rectangle +(6,3);
		\filldraw (0, 1+\x) + (0.1, -0.05) rectangle +(1,0.05) node [right] {Global $L^p(\Omega)$-solution};
		\draw (0.5,0+\x) circle (0.1); \node at (3,0+\x) {Local $L^p(\Omega)$-solution};
		\draw (0, -1+\x) + (0.1, -0.05) rectangle +(1,0.05) node [right] {No $L^2(\Omega)$-solution};
		\draw (0.5, 1) + (-0.15,-0.15) rectangle +(0.15,0.15); \node at (2.5,1) {White noise};
		\draw[white] (0,-1.5) circle (0.01);
	\end{tikzpicture}

	\caption{Solvability for the stochastic heat equation (i.e., $a=2$, $b=1$ and $r=0$) with $p\ge 2$.}
	\label{F:SHE}
\end{figure}
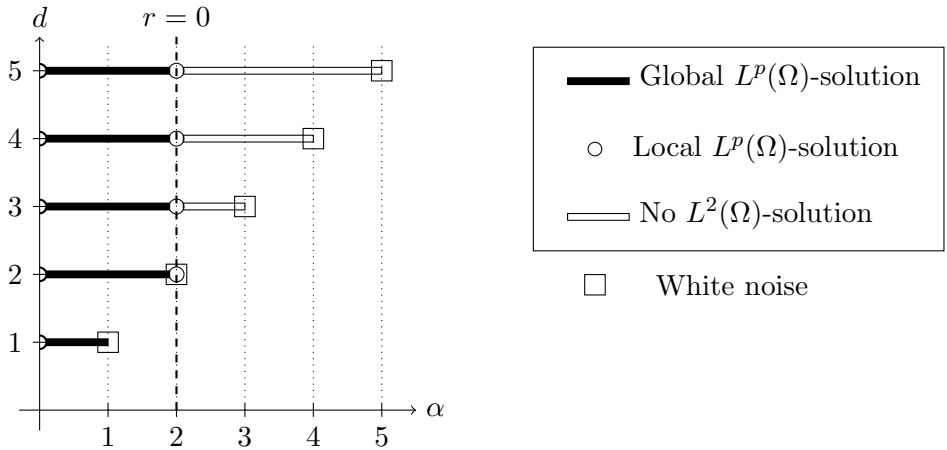

\begin{example}[SWE]
	\label{Eg:CriSWE}
	By setting $a=2$ and formally setting $b=2$ in \eqref{E:critical}, we obtain the following
	condition for the stochastic wave equation under which there only exits a local solution:
	\begin{align}
		\label{E:CrSWE}
		\alpha=3 + 2 r\le d \quad \text{and} \quad r\in[0,1/2].
	\end{align}
	First of all, results in Balan {\it et al} \cite{BCC21} require $d\le 3$. On the other hand,
	Assumption \ref{A:Nonnegative} and all known sufficient conditions for the nonnegativity of the
	fundamental solution (see Remark \ref{R:Nonnegative}) also require $d\le 3$ in case of $b\in
	(1,2)$. With this restriction, conditions \eqref{E:CrSWE} reduce to
	\begin{align*}
		\alpha=3=d \quad \text{and} \quad r=0,
	\end{align*}
	which says that at dimension $d=3$, when $\W$ is a white noise, there exits only a local
	$L^p(\Omega)$ solution for all $p\ge 2$. See Figure \ref{F:SWE} for an illustration. Moreover, one
	can check easily that the expression for the critical time $T_p$ in \eqref{E:Tp} in this case
	reduces to
	\begin{align}
		\label{E:TpCrSWE}
		T_p=	\frac{\nu^{3 / 2}}{2\theta (p-1) \sqrt{\mathcal{M}_{2,3}(\delta_0)}},  \quad p\ge 2,
	\end{align}
	which is identical to (1.12) ({\it ibid.}) when setting $\nu=2$. \bigskip
\end{example}

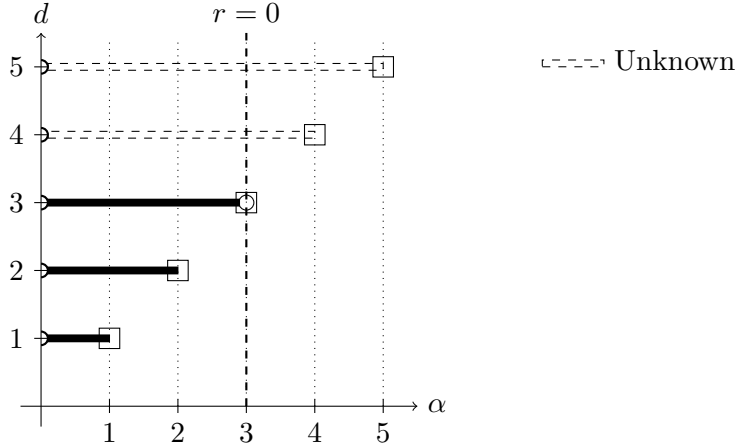
\begin{figure}[htb]
	\centering
	\begin{tikzpicture}[scale=0.9]
		\draw [thick,dashed] (3,0) -- (3,5.5) node [above] {$r=0$};
		\draw [->] (-0.3,0) -- (5.5,0) node [right] {$\alpha$};
		\draw [->] (0,-0.3) -- (0,5.5) node [above] {$d$};
		\foreach \x in {1,...,5}{
			\draw (\x,0.1)--++(0,-0.2) node [below] {$\x$};
			\draw (0.1,\x)--++(-0.2,0) node [left] {$\x$};
			\draw (\x,\x) + (-0.15,-0.15) rectangle +(0.15,0.15);
			\draw [thick] ([shift=(-90:0.1)]0,\x) arc (-90:90:0.1);
			\draw [thin, dotted] (\x,0) -- (\x, 5.4);
		}
		\filldraw (0, 1) + (0.1, -0.05) rectangle +(1,0.05);
		\filldraw (0, 2) + (0.1, -0.05) rectangle +(2,0.05);
		\filldraw (0, 3) + (0.1, -0.05) rectangle +(2.9,0.05);
		\draw (3,3) circle (0.11);
		\draw[dashed] (0, 5) + (0.1, -0.05) rectangle +(5,0.05);
		\draw[dashed] (0, 4) + (0.1, -0.05) rectangle +(4,0.05);
	\end{tikzpicture}
	\hspace{2em}
	\begin{tikzpicture}[scale=0.9]
		\def\x{3.2}
		\draw[dashed] (0, 1+\x) + (0.1, -0.05) rectangle +(1,0.05) node [right] {Unknown};
		\draw[white] (0,-1.5) circle (0.01);
	\end{tikzpicture}

	\caption{Solvability for the stochastic wave equation (i.e., $a=b=2$ and $r=0$). See Figure
	\ref{F:SHE} for an additional legend.}
	\label{F:SWE}
\end{figure}

\begin{example}[Fractional SPDEs with $r=\Ceil{b}-b$ and $a=2$]
	\label{Eg:FracSHE}
	For the fractional SDPEs with $b\ne 1$, many known works focus on the case when $r=\Ceil{b}-b$,
	where $\Ceil{b}$ is the ceiling function; see, e,g., \cite{Chen17Tran,MijenaNane15SPA}. To
	facilitate the discussions here, we will only focus on the case when $a=2$. In particular, by
	setting $r=\Ceil{b}-b$ and $a=2$, conditions in \eqref{E:critical} become
	\begin{align}
		\label{E:CrFrSHE}
		\begin{cases}
			\displaystyle \alpha =\frac{2}{b} \le d \quad  \text{and} \quad b\in [1/2,1], \bigskip\\
			\displaystyle \alpha =\frac{6}{b} \le d \quad  \text{and} \quad b\in [3/2,2).
		\end{cases}
	\end{align}
	When $b=1$, we have $r=0$ and the fundamental solution is the standard heat kernel. Hence,
	Assumption \ref{A:Nonnegative} is satisfied for all $d\ge 1$. When $b<1$, sufficient conditions in
	Remark \ref{R:Nonnegative} guarantees Assumption \ref{A:Nonnegative} for all $d\ge 1$. However,
	when $b>1$ and  $a=2$, from Remark  \ref{R:Nonnegative} we see that the fundamental solution is
	nonnegative only for $d\le 3$. The solvability for this case is illustrated in Figure
	\ref{F:FracSHE} and the critical time $T_p$ in case of local solution (hence, only for the case
	when $b\in[1/2,1]$) is equal to
	\begin{align}
		\label{E:TpFracSHE}
		T_p = \dfrac{\nu^{\alpha/2}}{2 \theta (p-1) \mathcal{M}_{2,d}^{2-\alpha/2}}.
	\end{align}
\end{example}

\begin{figure}[htpb]
	\centering
	\begin{tikzpicture}[scale=0.9]
		\draw [thick,dashed] (4,0) -- (4,5.5) node [above] {$r=1/2$};
		\draw [thick,dashed] (4,0) -- (4,6) node [above] {$b=3/2$};
		\draw [->] (-0.3,0) -- (5.5,0) node [right] {$\alpha$};
		\draw [->] (0,-0.3) -- (0,5.5) node [above] {$d$};
		\foreach \x in {1,...,5}{
			\draw (\x,0.1)--++(0,-0.2) node [below] {$\x$};
			\draw (0.1,\x)--++(-0.2,0) node [left] {$\x$};
			\draw (\x,\x) + (-0.15,-0.15) rectangle +(0.15,0.15);
			\draw [thick] ([shift=(-90:0.1)]0,\x) arc (-90:90:0.1);
			\draw [thin, dotted] (\x,0) -- (\x, 5.4);
		}
		\filldraw (0, 1) + (0.1, -0.05) rectangle +(1,0.05);
		\filldraw (0, 2) + (0.1, -0.05) rectangle +(2,0.05);
		\filldraw (0, 3) + (0.1, -0.05) rectangle +(3,0.05);
		\draw[thick,dashed] (0, 4) + (0.1, -0.05) rectangle +(4,0.05);
		\draw[thick,dashed] (0, 5) + (0.1, -0.05) rectangle +(5,0.05);
	\end{tikzpicture}
	\qquad \qquad
	\begin{tikzpicture}[scale=0.9]
		\draw [thick,dashed] (3.5,0) -- (3.5,6) node [above] {$b=12/7$};
		\draw [thick,dashed] (3.5,0) -- (3.5,5.5) node [above] {$r=2/7$};
		\draw [->] (-0.3,0) -- (5.5,0) node [right] {$\alpha$};
		\draw [->] (0,-0.3) -- (0,5.5) node [above] {$d$};
		\foreach \x in {1,...,5}{
			\draw (\x,0.1)--++(0,-0.2) node [below] {$\x$};
			\draw (0.1,\x)--++(-0.2,0) node [left] {$\x$};
			\draw (\x,\x) + (-0.15,-0.15) rectangle +(0.15,0.15);
			\draw [thick] ([shift=(-90:0.1)]0,\x) arc (-90:90:0.1);
			\draw [thin, dotted] (\x,0) -- (\x, 5.4);
		}
		\filldraw (0, 1) + (0.1, -0.05) rectangle +(1,0.05);
		\filldraw (0, 2) + (0.1, -0.05) rectangle +(2,0.05);
		\filldraw (0, 3) + (0.1, -0.05) rectangle +(3,0.05);
		\draw[thick,dashed] (0, 4) + (0.1, -0.05) rectangle +(4,0.05);
		\draw[thick,dashed] (0, 5) + (0.1, -0.05) rectangle +(5,0.05);
	\end{tikzpicture}
	\bigskip

	\begin{tikzpicture}[scale=0.9]
		\draw [thick,dashed] (2,0) -- (2,5.5) node [above] {$r=0$};
		\draw [thick,dashed] (2,0) -- (2,6) node [above] {$b=1$};
		\draw [->] (-0.3,0) -- (5.5,0) node [right] {$\alpha$};
		\draw [->] (0,-0.3) -- (0,5.5) node [above] {$d$};
		\foreach \x in {1,...,5}{
			\draw (\x,0.1)--++(0,-0.2) node [below] {$\x$};
			\draw (0.1,\x)--++(-0.2,0) node [left] {$\x$};
			\draw (\x,\x) + (-0.15,-0.15) rectangle +(0.15,0.15);
			\draw [thick] ([shift=(-90:0.1)]0,\x) arc (-90:90:0.1);
			\draw [thin, dotted] (\x,0) -- (\x, 5.4);
		}
		\filldraw (0, 1) + (0.1, -0.05) rectangle +(1,0.05);
		\filldraw (0, 2) + (0.1, -0.05) rectangle +(1.9,0.05);
		\filldraw (0, 3) + (0.1, -0.05) rectangle +(1.9,0.05);
		\filldraw (0, 4) + (0.1, -0.05) rectangle +(1.9,0.05);
		\filldraw (0, 5) + (0.1, -0.05) rectangle +(1.9,0.05);
		\draw (2,2) circle (0.11);
		\draw (2,3) circle (0.11);
		\draw (2,4) circle (0.11);
		\draw (2,5) circle (0.11);
		\draw (2, 3) + (0.1, -0.05) rectangle +(1,0.05);
		\draw (2, 4) + (0.1, -0.05) rectangle +(2,0.05);
		\draw (2, 5) + (0.1, -0.05) rectangle +(3,0.05);
	\end{tikzpicture}
	\qquad \qquad
	\begin{tikzpicture}[scale=0.9]
		\draw [thick,dashed] (2.5,0) -- (2.5,6) node [above] {$b=4/5$};
		\draw [thick,dashed] (2.5,0) -- (2.5,5.5) node [above] {$r=1/5$};
		\draw [->] (-0.3,0) -- (5.5,0) node [right] {$\alpha$};
		\draw [->] (0,-0.3) -- (0,5.5) node [above] {$d$};
		\foreach \x in {1,...,5}{
			\draw (\x,0.1)--++(0,-0.2) node [below] {$\x$};
			\draw (0.1,\x)--++(-0.2,0) node [left] {$\x$};
			\draw (\x,\x) + (-0.15,-0.15) rectangle +(0.15,0.15);
			\draw [thick] ([shift=(-90:0.1)]0,\x) arc (-90:90:0.1);
			\draw [thin, dotted] (\x,0) -- (\x, 5.4);
		}
		\filldraw (0, 1) + (0.1, -0.05) rectangle +(1,0.05);
		\filldraw (0, 2) + (0.1, -0.05) rectangle +(2,0.05);
		\filldraw (0, 3) + (0.1, -0.05) rectangle +(2.4,0.05);
		\filldraw (0, 4) + (0.1, -0.05) rectangle +(2.4,0.05);
		\filldraw (0, 5) + (0.1, -0.05) rectangle +(2.4,0.05);
		\draw (2.5,3) circle (0.11);
		\draw (2.5,4) circle (0.11);
		\draw (2.5,5) circle (0.11);
		\draw (2.5, 3) + (0.1, -0.05) rectangle +(0.5,0.05);
		\draw (2.5, 4) + (0.1, -0.05) rectangle +(1.5,0.05);
		\draw (2.5, 5) + (0.1, -0.05) rectangle +(2.5,0.05);
	\end{tikzpicture}
	\bigskip

	\begin{tikzpicture}[scale=0.9]
		\draw [thick,dashed] (3,0) -- (3,6) node [above] {$b=2/3$};
		\draw [thick,dashed] (3,0) -- (3,5.5) node [above] {$r=1/3$};
		\draw [->] (-0.3,0) -- (5.5,0) node [right] {$\alpha$};
		\draw [->] (0,-0.3) -- (0,5.5) node [above] {$d$};
		\foreach \x in {1,...,5}{
			\draw (\x,0.1)--++(0,-0.2) node [below] {$\x$};
			\draw (0.1,\x)--++(-0.2,0) node [left] {$\x$};
			\draw (\x,\x) + (-0.15,-0.15) rectangle +(0.15,0.15);
			\draw [thick] ([shift=(-90:0.1)]0,\x) arc (-90:90:0.1);
			\draw [thin, dotted] (\x,0) -- (\x, 5.4);
		}
		\filldraw (0, 1) + (0.1, -0.05) rectangle +(1,0.05);
		\filldraw (0, 2) + (0.1, -0.05) rectangle +(2,0.05);
		\filldraw (0, 3) + (0.1, -0.05) rectangle +(2.9,0.05);
		\filldraw (0, 4) + (0.1, -0.05) rectangle +(2.9,0.05);
		\filldraw (0, 5) + (0.1, -0.05) rectangle +(2.9,0.05);
		\draw (3,3) circle (0.11);
		\draw (3,4) circle (0.11);
		\draw (3,5) circle (0.11);
		\draw (3, 4) + (0.1, -0.05) rectangle +(1,0.05);
		\draw (3, 5) + (0.1, -0.05) rectangle +(2,0.05);
	\end{tikzpicture}
	\qquad \qquad
	\begin{tikzpicture}[scale=0.9]
		\draw [thick,dashed] (4,0) -- (4,6) node [above] {$b=1/2$};
		\draw [thick,dashed] (4,0) -- (4,5.5) node [above] {$r=1/2$};
		\draw [->] (-0.3,0) -- (5.5,0) node [right] {$\alpha$};
		\draw [->] (0,-0.3) -- (0,5.5) node [above] {$d$};
		\foreach \x in {1,...,5}{
			\draw (\x,0.1)--++(0,-0.2) node [below] {$\x$};
			\draw (0.1,\x)--++(-0.2,0) node [left] {$\x$};
			\draw (\x,\x) + (-0.15,-0.15) rectangle +(0.15,0.15);
			\draw [thick] ([shift=(-90:0.1)]0,\x) arc (-90:90:0.1);
			\draw [thin, dotted] (\x,0) -- (\x, 5.4);
		}
		\filldraw (0, 1) + (0.1, -0.05) rectangle +(1,0.05);
		\filldraw (0, 2) + (0.1, -0.05) rectangle +(2,0.05);
		\filldraw (0, 3) + (0.1, -0.05) rectangle +(3,0.05);
		\filldraw (0, 4) + (0.1, -0.05) rectangle +(3.9,0.05);
		\filldraw (0, 5) + (0.1, -0.05) rectangle +(3.9,0.05);
		\draw (4,4) circle (0.11);
		\draw (4,5) circle (0.11);
		\draw (4, 5) + (0.1, -0.05) rectangle +(1,0.05);
	\end{tikzpicture}

	\caption{Solvability for the fractional SPDEs in case of $a=2$ and $r=\Ceil{b}-b$. See Figures
	\ref{F:SHE} and \ref{F:SWE} for the legend.}
	\label{F:FracSHE}
\end{figure}

\begin{example}[Fractional SPDEs with $r=0$ and $a=2$]
	\label{Eg:FracSHE0}
	In this example, we study the special case of the fractional SPDEs when $r=0$.  The choice of
	$r=0$ has been used in, e.g., \cite{CHHH17}.  We will only consider the case $a=2$ for simplicity.
	Now by setting $r=0$ and $a=2$ and restricting $b\le 1$, conditions in \eqref{E:critical} become
	\begin{align}
		\label{E:CrFrSHE}
		\alpha =4- \frac{2}{b} \le d \quad  \text{and} \quad b\in (0,1].
	\end{align}
	As discussed in Example \ref{Eg:FracSHE}, Assumption \ref{A:Nonnegative} is satisfied for all
	$d\ge 1$ when $b\le 1$ but only for  $d\le 3$ when  $b>1$. The solvability for this case is
	illustrated in Figure \ref{F:FracSHE0} with $T_p$ given in \eqref{E:TpFracSHE}. In particular, for
	the example in the second figure in Figure \ref{F:FracSHE0}, namely, when $b=2/3$ and
	$\alpha=d=1$, the white noise driven SHE has a local solution with
	\begin{align}
		T_p= \frac{2^{5/2}\sqrt{\nu}}{3(p-1)\theta}, \qquad \text{for all $p\ge 2$,}
	\end{align}
	where we have applied \eqref{E:TpFracSHE} and the relation \eqref{E:Mdelta1d}.
\end{example}

More examples regarding the solvability can be studied in a similar way, which are left to the
interested readers.

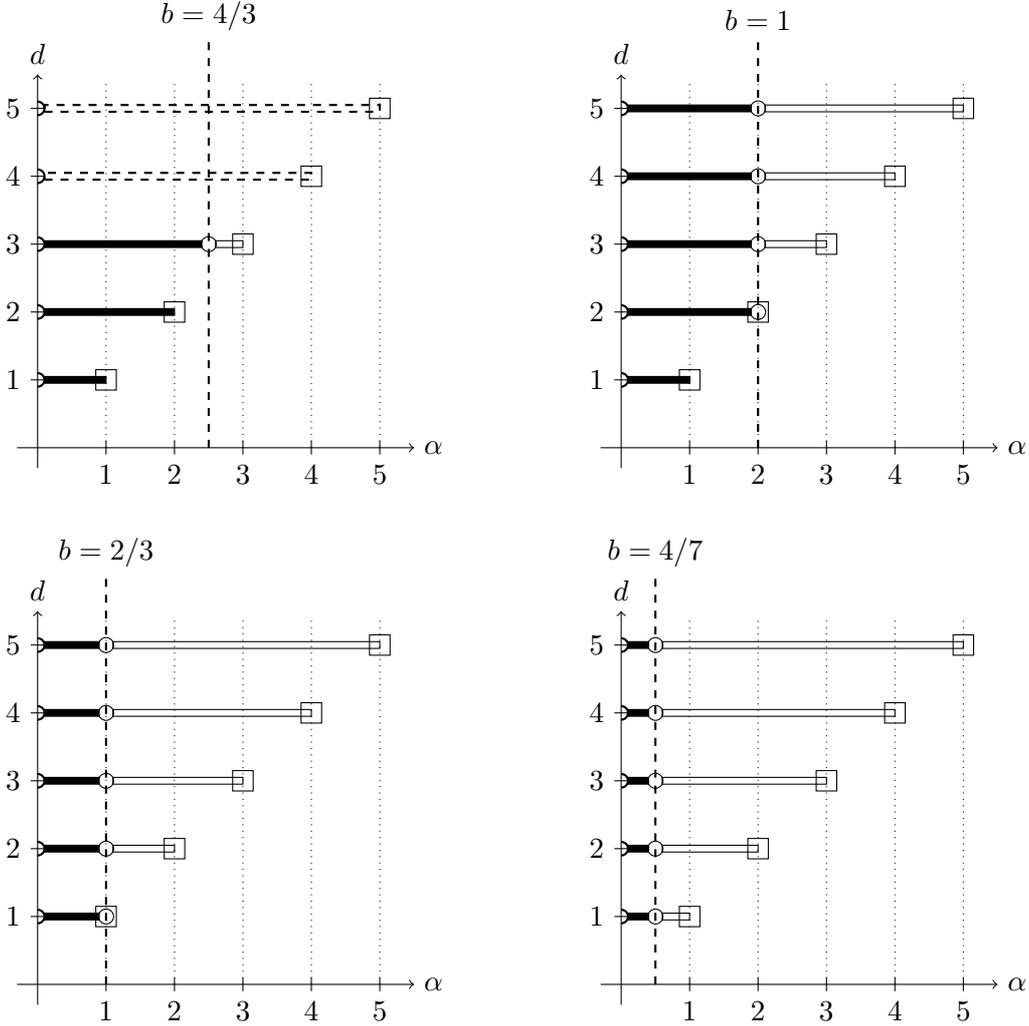
\begin{figure}[htpb]
	\centering
	\begin{tikzpicture}[scale=0.9]
		\draw [thick,dashed] (2.5,0) -- (2.5,6) node [above] {$b=4/3$};
		\draw [->] (-0.3,0) -- (5.5,0) node [right] {$\alpha$};
		\draw [->] (0,-0.3) -- (0,5.5) node [above] {$d$};
		\foreach \x in {1,...,5}{
			\draw (\x,0.1)--++(0,-0.2) node [below] {$\x$};
			\draw (0.1,\x)--++(-0.2,0) node [left] {$\x$};
			\draw (\x,\x) + (-0.15,-0.15) rectangle +(0.15,0.15);
			\draw [thick] ([shift=(-90:0.1)]0,\x) arc (-90:90:0.1);
			\draw [thin, dotted] (\x,0) -- (\x, 5.4);
		}
		\filldraw (0, 1) + (0.1, -0.05) rectangle +(1,0.05);
		\filldraw (0, 2) + (0.1, -0.05) rectangle +(2,0.05);
		\filldraw (0, 3) + (0.1, -0.05) rectangle +(2.4,0.05);
		\draw[thick,dashed] (0, 4) + (0.1, -0.05) rectangle +(4,0.05);
		\draw[thick,dashed] (0, 5) + (0.1, -0.05) rectangle +(5,0.05);
		\draw (2.5,3) circle (0.11);
		\draw (2.5,3) + (0.1, -0.05) rectangle +(0.5,0.05);
	\end{tikzpicture}
	\qquad \qquad
	\begin{tikzpicture}[scale=0.9]
		\draw [thick,dashed] (2,0) -- (2,6) node [above] {$b=1$};
		\draw [->] (-0.3,0) -- (5.5,0) node [right] {$\alpha$};
		\draw [->] (0,-0.3) -- (0,5.5) node [above] {$d$};
		\foreach \x in {1,...,5}{
			\draw (\x,0.1)--++(0,-0.2) node [below] {$\x$};
			\draw (0.1,\x)--++(-0.2,0) node [left] {$\x$};
			\draw (\x,\x) + (-0.15,-0.15) rectangle +(0.15,0.15);
			\draw [thick] ([shift=(-90:0.1)]0,\x) arc (-90:90:0.1);
			\draw [thin, dotted] (\x,0) -- (\x, 5.4);
		}
		\filldraw (0, 1) + (0.1, -0.05) rectangle +(1,0.05);
		\filldraw (0, 2) + (0.1, -0.05) rectangle +(1.9,0.05);
		\filldraw (0, 3) + (0.1, -0.05) rectangle +(1.9,0.05);
		\filldraw (0, 4) + (0.1, -0.05) rectangle +(1.9,0.05);
		\filldraw (0, 5) + (0.1, -0.05) rectangle +(1.9,0.05);
		\draw (2,2) circle (0.11);
		\draw (2,3) circle (0.11);
		\draw (2,4) circle (0.11);
		\draw (2,5) circle (0.11);
		\draw (2, 3) + (0.1, -0.05) rectangle +(1,0.05);
		\draw (2, 4) + (0.1, -0.05) rectangle +(2,0.05);
		\draw (2, 5) + (0.1, -0.05) rectangle +(3,0.05);
	\end{tikzpicture}

	\bigskip
	\begin{tikzpicture}[scale=0.9]
		\draw [thick,dashed] (1,0) -- (1,6) node [above] {$b=2/3$};
		\draw [->] (-0.3,0) -- (5.5,0) node [right] {$\alpha$};
		\draw [->] (0,-0.3) -- (0,5.5) node [above] {$d$};
		\foreach \x in {1,...,5}{
			\draw (\x,0.1)--++(0,-0.2) node [below] {$\x$};
			\draw (0.1,\x)--++(-0.2,0) node [left] {$\x$};
			\draw (\x,\x) + (-0.15,-0.15) rectangle +(0.15,0.15);
			\draw [thick] ([shift=(-90:0.1)]0,\x) arc (-90:90:0.1);
			\draw [thin, dotted] (\x,0) -- (\x, 5.4);
		}
		\filldraw (0, 1) + (0.1, -0.05) rectangle +(0.9,0.05);
		\filldraw (0, 2) + (0.1, -0.05) rectangle +(0.9,0.05);
		\filldraw (0, 3) + (0.1, -0.05) rectangle +(0.9,0.05);
		\filldraw (0, 4) + (0.1, -0.05) rectangle +(0.9,0.05);
		\filldraw (0, 5) + (0.1, -0.05) rectangle +(0.9,0.05);
		\draw (1,1) circle (0.11);
		\draw (1,2) circle (0.11);
		\draw (1,3) circle (0.11);
		\draw (1,4) circle (0.11);
		\draw (1,5) circle (0.11);
		\draw (1, 2) + (0.1, -0.05) rectangle +(1,0.05);
		\draw (1, 3) + (0.1, -0.05) rectangle +(2,0.05);
		\draw (1, 4) + (0.1, -0.05) rectangle +(3,0.05);
		\draw (1, 5) + (0.1, -0.05) rectangle +(4,0.05);
	\end{tikzpicture}
	\qquad \qquad
	\begin{tikzpicture}[scale=0.9]
		\draw [thick,dashed] (0.5,0) -- (0.5,6) node [above] {$b=4/7$};
		\draw [->] (-0.3,0) -- (5.5,0) node [right] {$\alpha$};
		\draw [->] (0,-0.3) -- (0,5.5) node [above] {$d$};
		\foreach \x in {1,...,5}{
			\draw (\x,0.1)--++(0,-0.2) node [below] {$\x$};
			\draw (0.1,\x)--++(-0.2,0) node [left] {$\x$};
			\draw (\x,\x) + (-0.15,-0.15) rectangle +(0.15,0.15);
			\draw [thick] ([shift=(-90:0.1)]0,\x) arc (-90:90:0.1);
			\draw [thin, dotted] (\x,0) -- (\x, 5.4);
		}
		\filldraw (0, 1) + (0.1, -0.05) rectangle +(0.4,0.05);
		\filldraw (0, 2) + (0.1, -0.05) rectangle +(0.4,0.05);
		\filldraw (0, 3) + (0.1, -0.05) rectangle +(0.4,0.05);
		\filldraw (0, 4) + (0.1, -0.05) rectangle +(0.4,0.05);
		\filldraw (0, 5) + (0.1, -0.05) rectangle +(0.4,0.05);
		\draw (0.5,1) circle (0.11);
		\draw (0.5,2) circle (0.11);
		\draw (0.5,3) circle (0.11);
		\draw (0.5,4) circle (0.11);
		\draw (0.5,5) circle (0.11);
		\draw (0.5, 1) + (0.1, -0.05) rectangle +(0.5,0.05);
		\draw (0.5, 2) + (0.1, -0.05) rectangle +(1.5,0.05);
		\draw (0.5, 3) + (0.1, -0.05) rectangle +(2.5,0.05);
		\draw (0.5, 4) + (0.1, -0.05) rectangle +(3.5,0.05);
		\draw (0.5, 5) + (0.1, -0.05) rectangle +(4.5,0.05);
	\end{tikzpicture}
	\caption{Solvability for the fractional SPDEs in case of $a=2$ and $r=0$. See Figures \ref{F:SHE} and \ref{F:SWE} for the legend.}
	\label{F:FracSHE0}
\end{figure}

\begin{example}[SHE with fractional Laplace]
	\label{Eg:StableSHE}
	The stochastic heat equation with fractional Laplace (i.e., the case when $b=1$,  $r=0$ and  $a\in
	(0,2]$) has been widely studied in the literature, but possibly with different noises. In this
	case, the fundamental solutions are transition densities for the alpha-stable jump processes,
	which are necessarily to be nonnegative. This is also consistent with the sufficient conditions
	for nonnegativity in Remark \ref{R:Nonnegative}. By setting $b=1$ and $r=0$ in \eqref{E:critical},
	we have the following condition:
	\begin{align*}
		\alpha = a \le d
	\end{align*}
	The solvability for this case is illustrated in Figure \ref{F:Stable}. In particular, when
	$\alpha=a=d=2$, there is only a local solution under white noise. In this case, using \eqref{E:Tp}
	and \eqref{E:Mdelta1d}, we see that
	\begin{align}
		\label{E:StableSHE-Tp}
		T_p = \frac{\nu}{2\theta (p-1)\mathcal{M}_{2,2}(\delta_0)}, \qquad \text{for $p\ge 2$}.
	\end{align}
\end{example}

\begin{figure}[htpb]
	\centering
	\begin{tikzpicture}[scale=0.9]
		\draw [thick,dashed] (0.5,0) -- (0.5,6) node [above] {$a=1/2$};
		\draw [->] (-0.3,0) -- (5.5,0) node [right] {$\alpha$};
		\draw [->] (0,-0.3) -- (0,5.5) node [above] {$d$};
		\foreach \x in {1,...,5}{
			\draw (\x,0.1)--++(0,-0.2) node [below] {$\x$};
			\draw (0.1,\x)--++(-0.2,0) node [left] {$\x$};
			\draw (\x,\x) + (-0.15,-0.15) rectangle +(0.15,0.15);
			\draw [thick] ([shift=(-90:0.1)]0,\x) arc (-90:90:0.1);
			\draw [thin, dotted] (\x,0) -- (\x, 5.4);
		}
		\filldraw (0, 1) + (0.1, -0.05) rectangle +(0.4,0.05);
		\filldraw (0, 2) + (0.1, -0.05) rectangle +(0.4,0.05);
		\filldraw (0, 3) + (0.1, -0.05) rectangle +(0.4,0.05);
		\filldraw (0, 4) + (0.1, -0.05) rectangle +(0.4,0.05);
		\filldraw (0, 5) + (0.1, -0.05) rectangle +(0.4,0.05);
		\draw (0.5,1) circle (0.11);
		\draw (0.5,2) circle (0.11);
		\draw (0.5,3) circle (0.11);
		\draw (0.5,4) circle (0.11);
		\draw (0.5,5) circle (0.11);
		\draw (0.5,1) + (0.1, -0.05) rectangle +(0.5,0.05);
		\draw (0.5,2) + (0.1, -0.05) rectangle +(1.5,0.05);
		\draw (0.5,3) + (0.1, -0.05) rectangle +(2.5,0.05);
		\draw (0.5,4) + (0.1, -0.05) rectangle +(3.5,0.05);
		\draw (0.5,5) + (0.1, -0.05) rectangle +(4.5,0.05);
	\end{tikzpicture}
	\qquad \qquad
	\begin{tikzpicture}[scale=0.9]
		\draw [thick,dashed] (1,0) -- (1,6) node [above] {$a=1$};
		\draw [->] (-0.3,0) -- (5.5,0) node [right] {$\alpha$};
		\draw [->] (0,-0.3) -- (0,5.5) node [above] {$d$};
		\foreach \x in {1,...,5}{
			\draw (\x,0.1)--++(0,-0.2) node [below] {$\x$};
			\draw (0.1,\x)--++(-0.2,0) node [left] {$\x$};
			\draw (\x,\x) + (-0.15,-0.15) rectangle +(0.15,0.15);
			\draw [thick] ([shift=(-90:0.1)]0,\x) arc (-90:90:0.1);
			\draw [thin, dotted] (\x,0) -- (\x, 5.4);
		}
		\filldraw (0, 1) + (0.1, -0.05) rectangle +(0.9,0.05);
		\filldraw (0, 2) + (0.1, -0.05) rectangle +(0.9,0.05);
		\filldraw (0, 3) + (0.1, -0.05) rectangle +(0.9,0.05);
		\filldraw (0, 4) + (0.1, -0.05) rectangle +(0.9,0.05);
		\filldraw (0, 5) + (0.1, -0.05) rectangle +(0.9,0.05);
		\draw (1,1) circle (0.11);
		\draw (1,2) circle (0.11);
		\draw (1,3) circle (0.11);
		\draw (1,4) circle (0.11);
		\draw (1,5) circle (0.11);
		\draw (1, 2) + (0.1, -0.05) rectangle +(1,0.05);
		\draw (1, 3) + (0.1, -0.05) rectangle +(2,0.05);
		\draw (1, 4) + (0.1, -0.05) rectangle +(3,0.05);
		\draw (1, 5) + (0.1, -0.05) rectangle +(4,0.05);
	\end{tikzpicture}

	\bigskip
	\begin{tikzpicture}[scale=0.9]
		\draw [thick,dashed] (1.5,0) -- (1.5,6) node [above] {$a=3/2$};
		\draw [->] (-0.3,0) -- (5.5,0) node [right] {$\alpha$};
		\draw [->] (0,-0.3) -- (0,5.5) node [above] {$d$};
		\foreach \x in {1,...,5}{
			\draw (\x,0.1)--++(0,-0.2) node [below] {$\x$};
			\draw (0.1,\x)--++(-0.2,0) node [left] {$\x$};
			\draw (\x,\x) + (-0.15,-0.15) rectangle +(0.15,0.15);
			\draw [thick] ([shift=(-90:0.1)]0,\x) arc (-90:90:0.1);
			\draw [thin, dotted] (\x,0) -- (\x, 5.4);
		}
		\filldraw (0, 1) + (0.1, -0.05) rectangle +(1,0.05);
		\filldraw (0, 2) + (0.1, -0.05) rectangle +(1.4,0.05);
		\filldraw (0, 3) + (0.1, -0.05) rectangle +(1.4,0.05);
		\filldraw (0, 4) + (0.1, -0.05) rectangle +(1.4,0.05);
		\filldraw (0, 5) + (0.1, -0.05) rectangle +(1.4,0.05);
		\draw (1.5,2) circle (0.11);
		\draw (1.5,3) circle (0.11);
		\draw (1.5,4) circle (0.11);
		\draw (1.5,5) circle (0.11);
		\draw (1.5, 2) + (0.1, -0.05) rectangle +(0.5,0.05);
		\draw (1.5, 3) + (0.1, -0.05) rectangle +(1.5,0.05);
		\draw (1.5, 4) + (0.1, -0.05) rectangle +(2.5,0.05);
		\draw (1.5, 5) + (0.1, -0.05) rectangle +(3.5,0.05);
	\end{tikzpicture}
	\qquad \qquad
	\begin{tikzpicture}[scale=0.9]
		\draw [thick,dashed] (2,0) -- (2,6) node [above] {$a=2$};
		\draw [->] (-0.3,0) -- (5.5,0) node [right] {$\alpha$};
		\draw [->] (0,-0.3) -- (0,5.5) node [above] {$d$};
		\foreach \x in {1,...,5}{
			\draw (\x,0.1)--++(0,-0.2) node [below] {$\x$};
			\draw (0.1,\x)--++(-0.2,0) node [left] {$\x$};
			\draw (\x,\x) + (-0.15,-0.15) rectangle +(0.15,0.15);
			\draw [thick] ([shift=(-90:0.1)]0,\x) arc (-90:90:0.1);
			\draw [thin, dotted] (\x,0) -- (\x, 5.4);
		}
		\filldraw (0, 1) + (0.1, -0.05) rectangle +(1,0.05);
		\filldraw (0, 2) + (0.1, -0.05) rectangle +(1.9,0.05);
		\filldraw (0, 3) + (0.1, -0.05) rectangle +(1.9,0.05);
		\filldraw (0, 4) + (0.1, -0.05) rectangle +(1.9,0.05);
		\filldraw (0, 5) + (0.1, -0.05) rectangle +(1.9,0.05);
		\draw (2,2) circle (0.11);
		\draw (2,3) circle (0.11);
		\draw (2,4) circle (0.11);
		\draw (2,5) circle (0.11);
		\draw (2, 3) + (0.1, -0.05) rectangle +(1,0.05);
		\draw (2, 4) + (0.1, -0.05) rectangle +(2,0.05);
		\draw (2, 5) + (0.1, -0.05) rectangle +(3,0.05);
	\end{tikzpicture}
	\caption{Solvability for the stochastic heat equation with fractional Laplace, i.e, the case when
		$b=1$ and $r=0$. See Figures \ref{F:SHE} and \ref{F:SWE} for the legend.}
	\label{F:Stable}
\end{figure}
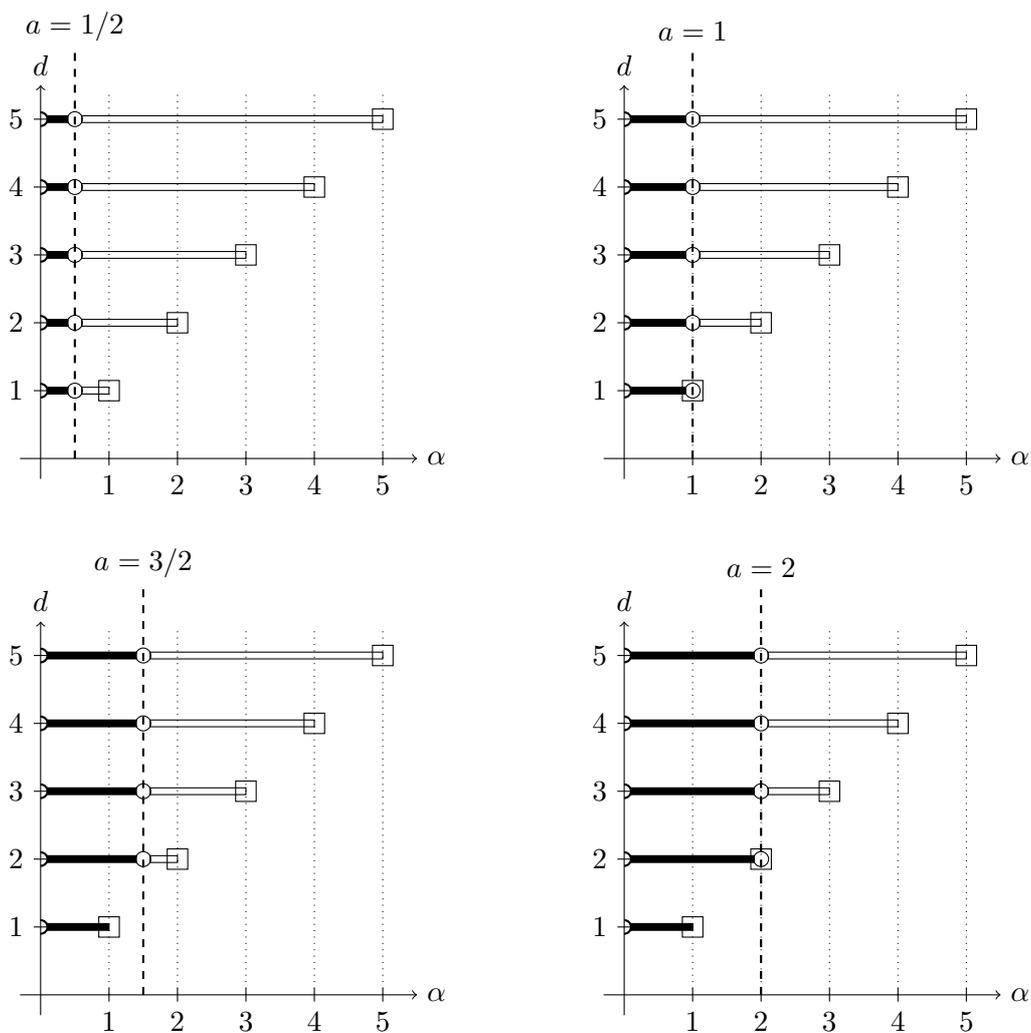

\subsection{Examples on asymptotics}
\label{SS:EgAsym}

In this part, we list several examples for the asymptotics when global solutions exist. In
particular, we will show that the asymptotics in \eqref{E:momentasym} interpolates the corresponding
results for both stochastic wave and heat equations.

\begin{example}[Asymptotics for SWE]
	\label{Eg:AsymSWE}
	Even though our results requires $b$ to be strictly less than $2$, but by
	formally setting
	\begin{align*}
		a = b = \nu = 2 \quad \text{and} \quad r = 0,
	\end{align*}
	we have that
	\begin{align*}
		\beta = \frac{4-\alpha}{3-\alpha} \quad \text{and} \quad
		t_p   = (p-1)^{1/(4-\alpha)} t,
	\end{align*}
	and results in \eqref{E:momentasym}, \eqref{E:momentasym-t}, and \eqref{E:momentasym-p} recover
	the corresponding results for the stochastic wave equation, namely,  (1.9), (1.10), and (1.11) of
	\cite{BCC21}, respectively.  Due to the importance of white noise and for the future references,
	here we list two special cases regarding white noise:\\

	\noindent(1) The SWE with white noise in $\R$: By further setting $d=\alpha=1$, we see that
	\begin{align}
		\label{E:AsymSWE-1d}
			\lim_{t \to \infty} \frac{\log \E\left[|u(t,x)|^p\right]}{t^{3/2}} =  \frac{p(p-1)^{1/2}\sqrt{\theta}}{3 (2\nu)^{1/4}}
			\quad \text{and} \quad
			\lim_{p \to \infty} \frac{\log \E\left[|u(t,x)|^p\right]}{p^{3/2}} = \frac{t^{3/2}\sqrt{\theta}}{3 (2\nu)^{1/4}},
	\end{align}
	where we have applied \eqref{E:Mdelta1d}.\\

	\noindent(2) The SWE with white noise in $\R^2$: Similarly, by setting $d=\alpha=2$, we see that
	\begin{align}
		\label{E:AsymSWE-2d}
			\lim_{t \to \infty} \frac{\log \E\left[|u(t,x)|^p\right]}{t^2} = \frac{p(p-1)\theta\mathcal{M}_{2,2}(\delta_0)}{2\nu}
			\quad \text{and} \quad
			\lim_{p \to \infty} \frac{\log \E\left[|u(t,x)|^p\right]}{p^2} = \frac{t^2\theta\mathcal{M}_{2,2}(\delta_0)}{2\nu}.
	\end{align}
\end{example}

\begin{example}[Asymptotics for SHE]
	\label{Eg:AsymSHE}
	As for the stochastic heat equation case, by setting
	\begin{align*}
		a = 2,\quad b= \nu = 1 \quad \text{and} \quad r = 0,
	\end{align*}
	we have that
	\begin{align*}
		\beta = \frac{4-\alpha}{2-\alpha} \quad \text{and} \quad
		t_p=(p-1)^{2/(4-\alpha)}t,
	\end{align*}
	and results in \eqref{E:momentasym} and \eqref{E:momentasym-t} recover the corresponding
	conjectured results for SHE, namely, (1.16) and (1.17) of Balan {\it et al} \cite{BCC21},
	respectively, which are equivalent to Theorem 1.1 and 1.2 of X. Chen \cite{Chen17AIHP} when
	setting $\alpha_0=0$ and using \cite[Lemma A.2]{CHSX15AIHP} to rewrite the constant
	$\mathcal{E}$ in \cite{Chen17AIHP} in terms of $\mathcal{M}_a$. Due to the importance of the white
	noise case, we list the corresponding asymptotics here. When $\alpha=d=1$,
	\begin{align}
		\label{E:AsymSHE1d}
		\lim_{t \to \infty} \frac{\log \E\left(|u(t,x)|^p\right)}{t^3} = \frac{p(p-1)^2\theta^2}{24\nu}
		\quad \text{and} \quad
		\lim_{p \to \infty} \frac{\log \E\left(|u(t,x)|^p\right)}{p^3} = \frac{t^3\theta^2}{24\nu},
	\end{align}
	where we have applied \eqref{E:Mdelta1d}. Note that some upper and lower bounds for the first
	limit in \eqref{E:AsymSHE1d} in case of $p=2$ were earlier obtained by Hu \cite[part 1) of
	Theorem 4.1]{Hu02Chaos}.
\end{example}

\begin{example}[Asymptotics for SHE with fractional Laplace]
	\label{Eg:AsymStable}
	In this example we restrict ourselves to the case when $b=1$, $a\in (0,2]$, $\alpha < d$, and $r=0$, which is the 1-dimensional SHE with fractional Laplace. With this set up we have
	\begin{align*}
	\beta = \frac{2a-\alpha}{a-\alpha} \qquad \text{and} \qquad t_p = (p-1)^{\frac{a}{2a-\alpha}} t,
	\end{align*}
	and by Corollary \ref{C:Lim},
	\begin{align}
		\label{E:AsymStable-t}
		\lim_{t \to \infty} \frac{\log \E\left(|u(t,x)|^p\right)}{t^{\frac{2a-\alpha}{a-\alpha} }} =
		p (p-1)^{\frac{a}{a-\alpha}} \left( \frac{1}{2} \right)\left( \frac{2a}{2a-\alpha} \right)^{\frac{2a-\alpha}{a-\alpha}} \left[ \theta \nu^{-\alpha/a} \mathcal{M}_{a,d}^{\frac{2a-\alpha}{a}} \right]^{ \frac{a}{a-\alpha} } \left(\frac{a-\alpha}{a}\right)
	\end{align}
	and
	\begin{align}
		\label{E:AsymStable-p}
		\lim_{p \to \infty} \frac{\log \E\left(|u(t,x)|^p\right)}{p^{\frac{2a-\alpha}{a-\alpha} }} =
		t^{\frac{2a-\alpha}{a-\alpha}}
		\left( \frac{1}{2} \right)\left( \frac{2a}{2a-\alpha} \right)^{\frac{2a-\alpha}{a-\alpha}} \left[ \theta \nu^{-\alpha/a} \mathcal{M}_{a,d}^{\frac{2a-\alpha}{a}} \right]^{ \frac{a}{a-\alpha} } \left(\frac{a-\alpha}{a}\right).
	\end{align}
	this setup has been studied in \cite{CHSS18} for the case of a time-dependent noise where the covariance function is given by
	\[
		\mathbb{E}[\dot{W}(r,x)\dot{W}(s,y)] = |r-s|^{-\alpha_0}\gamma(x-y)
	\]
	and $\gamma(x)$ is defined to be either of the following:
	\begin{align}
		\label{E:gammaHu}
		\gamma(x) := \begin{cases}
									|x|^{-\alpha}                  & \text{where } \alpha \in (0,d)\: \text{ or} \\
									\prod_{j=1}^d |x_j|^{\alpha_j} & \text{where } \alpha_j \in (0,1).
									\end{cases}
	\end{align}
	They proved that for $\alpha < d$ and let $p \ge 2$,
	\begin{equation}\label{E:AsymVar}
		\lim_{t\to\infty} t^{-\frac{2a - \alpha - \alpha\alpha_0}{a-\alpha}}\log\mathbb{E}[|u(t,x)|^p] = p(p-1)^{\frac{a}{a-\alpha}}\textbf{M}(a,\alpha_0,d,\gamma),
	\end{equation}
	where the variational constant is given by
	\begin{align*}
		\textbf{M}(a,\alpha_0,d,\gamma) = \sup_{g \in \mathcal{A}_a} \bigg\{ \frac{1}{2} \int_0^1\int_0^1 \int_{\R^{2d}} & \frac{\gamma(x)}{|r-s|^{\alpha_0}}g^2(s,x)g^2(r,y)  \ud x \ud y \ud r \ud s
		\\& - (2\pi)^{-d} \int_0^1 \int_{\R^d} |x|^a |\mathcal{F}g(s,\xi)|^2 \ud \xi \ud s \bigg\}
	\end{align*}
	with\footnote{Note that the Fourier transform is defined differently in \cite{CHSS18}.}
	\[
		\mathcal{A}_d := \left\{ g(s,x) : \int_{\R^d} g^2(s,x) \ud x = 1, \forall s \in [0,1] \text{ and } (2\pi)^{-d} \int_0^1 \int_{\R^d} |x|^a |\mathcal{F}g(s,\xi)|^2 \ud \xi \ud s < \infty \right\}.
	\]
	By setting $\alpha_0=0$ and letting $g(s,x)=g(x) \in \mathcal{F}_a$ be independent in $s$, which is the time-independent setup, then Equation \eqref{E:AsymStable-t} and Lemma \ref{L:EvsM} together recover \eqref{E:AsymVar}. Indeed, by observing \eqref{E:VarConstE} and \eqref{E:EvsM}, we see that
	\begin{align}
		\textbf{M}(a,\alpha_0,d,\gamma) & = \mathbf{E}_{a,d}\left(\frac{1}{2}\gamma,2\right) \nonumber \\
                                    & = 2^{\frac{-a}{a-\alpha}}\left(\frac{a-\alpha}{a}\right)\left( \frac{2a}{2a-\alpha} \right)^{\frac{2a-\alpha}{a-\alpha}}\mathcal{M}_{a,d}\left(\gamma,1\right)^{\frac{2a-\alpha}{a-\alpha}} \label{E:MbfAndMmathcalRelation}
	\end{align}
	and by rewriting \eqref{E:AsymVar} with \eqref{E:MbfAndMmathcalRelation} yields \eqref{E:AsymStable-t}.
	Finally, we note that condition \eqref{E:gammaHu} can be relaxed to allow white noise in one
	dimensional case, namely, $\alpha=d=1$. In this case, one can simply replace $\alpha$ and $d$ in both
	\eqref{E:AsymStable-t} and \eqref{E:AsymFracSHE-p} by $1$ and in addition replace $\mathcal{M}_{a,d}$ by
	$\mathcal{M}_{a ,1}(\delta_0)$.
\end{example}

\begin{example}[Asymptotics for SPDEs with $r=\Ceil{b}-b$ and white noise]
	\label{Eg:AsymFracSHE}
	In this example, we consider the case when $a=2$, $d=\alpha=1$ (white noise), and $r=\Ceil{b}-b$.
	As seen in Example \ref{Eg:FracSHE}, there exits a global solution. In this case,
	\begin{align*}
		\beta = \frac{4\Ceil{b}-b}{4\Ceil{b}-b-2} \qquad \text{and} \qquad
		t_p   = (p-1)^{\frac{2}{4\Ceil{b}-b}}\: t,
	\end{align*}
	and by \eqref{E:Mdelta1d} and Corollary \ref{C:Lim},
	\begin{align}
		\label{E:AsymFracSHE-t}
		\begin{aligned}
			\lim_{t \to \infty} \frac{\log \E\left(|u(t,x)|^p\right)}{t^{\frac{4\Ceil{b}-b}{4\Ceil{b}-b-2} }} =
			& p (p-1)^{\frac{2}{4\Ceil{b}-b-2}}\\
			& \times \left(\frac{9\theta^2}{8\nu}\right)^{\frac{1}{4 \Ceil{b}-b-2}} (4 \Ceil{b}-b-2) \left(4 \Ceil{b}-b\right)^{-\frac{4 \Ceil{b}-b}{4 \Ceil{b}-b-2}},
		\end{aligned}
	\end{align}
	and
	\begin{align}
		\label{E:AsymFracSHE-p}
		\lim_{p \to \infty} \frac{\log \E\left(|u(t,x)|^p\right)}{p^{\frac{4\Ceil{b}-b}{4\Ceil{b}-b-2} }} =
		t^{\frac{4\Ceil{b}-b}{4\Ceil{b}-b-2}}
			\left(\frac{9\theta^2}{8\nu}\right)^{\frac{1}{4 \Ceil{b}-b-2}} (4 \Ceil{b}-b-2) \left(4 \Ceil{b}-b\right)^{-\frac{4 \Ceil{b}-b}{4 \Ceil{b}-b-2}}.
	\end{align}
\end{example}

\begin{example}[Asymptotics for SPDEs with $r=0$ and white noise]
	\label{Eg:AymsFracSHE0}
	From Example \ref{Eg:AymsFracSHE0}, we see that when $a=2$, $r=0$, $d=\alpha=1$ (white noise), the
	global solution exists when  $b\in (2 /3, 2)$. In this case, we have that
	\begin{align*}
		\beta = \frac{3b}{3b-2} \qquad \text{and} \qquad
		t_p   = (p-1)^{2 / (3b)}\: t,
	\end{align*}
	and by \eqref{E:Mdelta1d} and Corollary \ref{C:Lim},
	\begin{gather}
		\label{E:AsymFracSHE-t}
			\lim_{t \to \infty} \frac{\log \E\left(|u(t,x)|^p\right)}{t^{3b/(3b-2)}} =
			p (p-1)^{\frac{2}{3b}} \left(b-\frac{2}{3}\right) b^{-\frac{3 b}{3 b-2}} \left(\frac{\theta^2}{8\nu}\right)^{\frac{1}{3 b-2}}
			\quad \text{and}\\
		\label{E:AsymFracSHE-p}
		\lim_{p \to \infty} \frac{\log \E\left(|u(t,x)|^p\right)}{p^{3b/(3b-2)}} =
		t^{\frac{3b}{3b-2}}
		\left(b-\frac{2}{3}\right) b^{-\frac{3 b}{3 b-2}} \left(\frac{\theta^2}{8\nu}\right)^{\frac{1}{3 b-2}}.
	\end{gather}
\end{example}

\section{Some preliminaries}
\label{S:Pre} \subsection{Skorohod integral and mild solution}
\label{SS:Malliavin}

We start with a nonnegative and nonnegative definite tempered measure $\Gamma$ with density $\gamma$
in the sense that $\Gamma(\ud x) = \gamma(x) \ud x$ and
\begin{align*}
	\int_{\R^d} \Gamma(\ud x) (\phi * \widetilde{\phi})(x) \ge 0 \quad \text{for all $\phi \in \mathscr{S}(\R^d)$}
\end{align*}
where $\widetilde{\phi}(x): = \phi(-x)$.
According to the Bochner theorem, there exists a nonnegative and nonnegative definite measure $\mu$,
often referred as the {\it spectral measure} on $\R^d$ whose Fourier transform (in the weak sense)
is $\Gamma$, namely, that for any $\phi \in \mathcal{D}(\R^d)$ (the space of test functions),
\begin{align*}
	\int_{\R^d}\Gamma(\ud x) \phi(x) = \int_{\R^d} \mu(\ud \xi) \mathcal{F}\phi (\xi).
\end{align*}
Since $\mu$ is nonnegative definite, the following functional
\begin{align}
	\label{E:Cov}
	C(\phi,\psi) = \int_{\R} \mathcal{F}\phi(\xi) \overline{ \mathcal{F}\psi(\xi) } \mu(\ud \xi),
	\qquad \text{for all $\phi,\psi\in\mathcal{D}(\R^d)$}
\end{align}
is nonnegative-definite and thus one can associate it with a zero-mean Gaussian processes, $W:= \left\{
W(\phi) : \phi \in \mathcal{D}(\R^d) \right\}$, with the covariance functional of $W$
given by \eqref{E:Cov}. In other words,
\begin{align*}
	\mathbb{E}\left(W(\phi)W(\psi)\right) = \int_{\R} \mathcal{F}\phi(\xi) \overline{ \mathcal{F}\psi(\xi) } \mu(\ud \xi) =: \langle \phi, \psi \rangle_{\mathcal{H}}.
\end{align*}
Let $\mathcal{H}$ be the completion of $\mathcal{D}(\R^d)$ with respect to $\langle \cdot, \cdot
\rangle_{\mathcal{H}}$ and thus we see $\phi \mapsto W(\phi)$ is an isometry from
$\mathcal{D}(\R^d)$ to $L^2(\Omega)$, that is, $\E\left(W(\phi)^2\right)	=
\Norm{\phi}_{\mathcal{H}}^2$ for $\phi\in \mathcal{D}(\R^d)$. One can extend this isometry from
$\mathcal{D}(\R^d)$ to $\mathcal{H}$. We refer the interested readers to \cite{Minicourse09} and
references therein.  \bigskip

We denote $\delta$ the {\it Skorohod integral} with respect to $W$ and denote its domain by
$\text{Dom}(\delta)$. $u$ is called {\it Skorohod integrable} if $u\in \text{Dom}(\delta)$, in which
case we write $\delta(u) = \int_{\R^d} u(x) W(\delta x)$ and by isometry,
$\E\left(\delta(u)^2\right)=\E(\Norm{u}_{\mathcal{H}}^2)$.

\begin{definition}[Mild, local and global solutions]
	\label{D:Sol}
	(1) For $T\in (0,\infty]$, a random field $u=\left\{u(t,x) : t \in (0,T),\: x\in\R^d\right\}$ is
	called {\it a mild solution} to the equation \eqref{E:SPDE} if for all $x\in\R^d$ and $s,t$ fixed
	with $0<s\le t <T$, $y\to G(t-s,x-y)u(s,y)$ is Skorohod integrable and the following stochastic
	integral equation holds almost surely
	\begin{equation}
		\label{E:IntEq}
		u(t,x) = 1 + \sqrt{\theta}\int_0^t \left( \int_{\R^d} G(t-s,x-y)u(s,y) W(\delta y) \right)\ud s.
	\end{equation}

	\noindent (2) Let $u(t,x)$ be a mild solution to \eqref{E:SPDE} (or \eqref{E:IntEq}) and fix $p\ge
	1$. We call $u(t,x)$ a {\it global $L^p(\Omega)$-solution}, or simply an {\it
	$L^p(\Omega)$-solution} if
	\begin{align}
		\label{E:Lp}
		\Norm{u(t,x)}_p < \infty \qquad \text{for all $t>0$ and $x \in \R^d$.}
	\end{align}

	\noindent (3) If there exist $0<T_1\le T_2<\infty$ such that $\Norm{u(t,x)}_p$ is finite for all
	$t\in(0,T_1)$ and $x\in\R^d$, but $\Norm{u(t,x)}_p$ does not exist whenever $t>T_2$, the mild
	solution $u(t,x)$ in this case is called a {\it local $L^p(\Omega)$-solution}.
\end{definition}

Note that through construction of the Skorohod integral $\delta$, a mild solution is necessarily to
be an $L^2(\Omega)$-solution.  For more details, one may check, e.g., Nualart \cite[Chapter
3]{Nualart18Book}.  \bigskip


Through the standard {\it Picard iteration} scheme, the solution can be expressed by the following
{\it Weiner chaos expansion}:
\begin{equation}
	\label{E:solwexp}
	u(t,x) = 1 + \sum_{k=1}^\infty \theta^{k/2}I_k(f_k(\cdot,x,t)),
\end{equation}
where $I_k : \mathcal{H}^{\otimes k}\left((\R^d)^k\right) \to \mathcal{H}_k$ is the $k^{th}$ order
Skorohod integral and $\mathcal{H}_k$ is the $k^{th}$ Wiener chaos space and the kernels
$f_k(\cdot,x,t)$, obtained through the iteration, are equal to
\begin{align*}
	f_n(x_1,...,x_k;x,t) & = \int_0^t \int_0^{t_n}\cdots \int_0^{t_2} G(t-t_n,x-x_n)\cdots G(t_2-t_1,x_2-x_1) \ud t_1\cdots \ud t_n \\
                       & = \int_0^t \int_0^{t_n}\cdots \int_0^{t_2} G(t_1,x-x_n)\cdots G(t_n-t_{n-1},x_2-x_1) \ud t_1\cdots \ud t_n. \nonumber
\end{align*}
For ease of notation, throughout this article, we may write the above integrals as
\begin{align*}
	f_n(x_1,...,x_n;x,t) & = \int_{[0,t]^n_<} G(t-t_n,x-x_n)\cdots G(t_2-t_1,x_2-x_1) \ud \vec{t} \\
                       & = \int_{[0,t]^n_<}G(t_1,x-x_n)\cdots G(t_n-t_{n-1},x_2-x_1) \ud \vec{t},
\end{align*}
where $[0,t]_<^n:=\{(t_1,\cdots,t_n)\in [0,t]^n:\: t_1<\cdots < t_n\}$. As usual, we use
$\widetilde{f_n}(\cdot, x, t)$ to denote the symmetrization of $f_n(\cdot,x,t)$:
	\begin{align*}
		\widetilde{f_n}(\cdot; x, t) &= \frac{1}{n!} \sum_{\rho \in \Sigma_n} f_n\left(x_{\rho(1)},\cdots, x_{\rho(n)}\right) \\
																 &= \frac{1}{n!} \sum_{\rho \in \Sigma_n} \int_{[0,t]^n_<} G\left(t-t_n,x-x_{\rho(n)}\right)\cdots G\left(t_2-t_1,x_{\rho(2)}-x_{\rho(1)}\right) \ud \vec{t},
	\end{align*}
where $\Sigma_n$ is the set of all permutations of $\{1,\cdots,n\} $.
By setting $t_{n+1}=t$, the Fourier transform of the kernels, $f_n$, is given by
\begin{align}
	\label{E:fweker}
	\mathcal{F}f_n(\cdot;x,t)(\xi_1,...,\xi_n) = e^{-i \left(\sum_{j=1}^n \xi_j \right) \cdot x} \int_{[0,t]^n_<} \prod_{k=1}^n \overline{\mathcal{F}G(t_{k+1}-t_k,\cdot)\left(\sum_{j=1}^k \xi_j\right) } 	\ud \vec{t}.
\end{align}

Recall the notation above that $G(t,x) = Y_{a,b,r,d}(t,x)$. The following scaling properties for both $\mathcal{F}G(t,\cdot)(\xi)$ and
$\Norm{\widetilde{f_n}(\cdot,t,x)}_{\mathcal{H}^{\otimes n}}$ play an important role in the paper.

\begin{lemma}
	\label{L:Scale}
	For any $c,t>0$, $n\ge 1$,  $\xi,\xi_1,\cdots,\xi_n\in\R^d$, the following scaling properties hold:
	\begin{gather}
		\label{E:greenscale}
		\mathcal{F}G(t,\cdot)(c\xi) = c^{-\frac{a}{b}(b+r -1)}\mathcal{F}G\left(c^{\frac{a}{b}}t,\cdot\right)( \xi)  \quad \text{and} \quad
		\mathcal{F}G(ct,\cdot)(\xi) = c^{b+r -1}\mathcal{F}G(t,\cdot)(c^{b/a} \xi),\\
		\label{E:Ffnscale}
		\mathcal{F}\widetilde{f}_n(\cdot,0,ct)(\xi_1,\cdots,\xi_n) = c^{n(b+r)} \mathcal{F}\widetilde{f}_n(\cdot,0,t)(c^{b/a}\xi_1, \cdots, c^{b/a}\xi_n), \\
		\label{E:normscale}
		\Norm{\widetilde{f_n}(\cdot,0,t)}_{\mathcal{H}^{\otimes n}}^2 = t^{[2(b + r ) -b \alpha / a]n} \Norm{\widetilde{f_n}(\cdot,0,1)}_{\mathcal{H}^{\otimes n}}^2.
	\end{gather}
\end{lemma}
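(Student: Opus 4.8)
The plan is to reduce all the stated scaling identities to the single fact that $\mathcal{F}G(t,\cdot)(\xi)$ is jointly homogeneous in $(t,\xi)$, then propagate this through the Fourier transform \eqref{E:fweker} of the Picard kernels, and finally combine it with the scale-homogeneity of the spectral density in \eqref{E:specfunction}.

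\textbf{Step 1 (the fundamental solution).} Since $(-\Delta)^{a/2}$ annihilates constants, the homogeneous solution with the prescribed initial data is $\equiv 1$, so the only fundamental solution entering \eqref{E:spdeint} is $G=Y_{a,b,r,\nu,d}$. By \cite[Theorem 4.1]{CHN19}, $\mathcal{F}G(t,\cdot)(\xi)$ is the order-$r$ Riemann--Liouville integral in $t$ of $t^{b-1}E_{b,b}(-\tfrac{\nu}{2}|\xi|^at^b)$, so the Mittag--Leffler identity $I_t^r[t^{b-1}E_{b,b}(\lambda t^b)]=t^{b+r-1}E_{b,b+r}(\lambda t^b)$ gives
\begin{align*}
\mathcal{F}G(t,\cdot)(\xi)=t^{b+r-1}E_{b,b+r}\!\left(-\tfrac{\nu}{2}|\xi|^at^b\right)=t^{b+r-1}\,\Phi\!\left(|\xi|^at^b\right),\qquad \Phi(w):=E_{b,b+r}\!\left(-\tfrac{\nu}{2}w\right);
\end{align*}
the exact shape of $\Phi$ is irrelevant, only this homogeneity matters. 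Both identities in \eqref{E:greenscale} now follow by inspection: $\mathcal{F}G(t,\cdot)(c\xi)=t^{b+r-1}\Phi(c^a|\xi|^at^b)$ and $c^{-\frac{a}{b}(b+r-1)}\mathcal{F}G(c^{a/b}t,\cdot)(\xi)=c^{-\frac{a}{b}(b+r-1)}(c^{a/b}t)^{b+r-1}\Phi(|\xi|^ac^at^b)$ coincide, while the second identity follows the same way from $\mathcal{F}G(ct,\cdot)(\xi)=(ct)^{b+r-1}\Phi(|\xi|^ac^bt^b)$. (One may also read \eqref{E:greenscale} off directly from the Laplace-transform representation of $\mathcal{F}G(t,\cdot)(\xi)$ via the dilation property of the Laplace transform, without invoking any special function.)

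\textbf{Step 2 (the Picard kernels).} Fix $x=0$, so the phase factor in \eqref{E:fweker} is $1$, and set $\eta_k=\xi_1+\cdots+\xi_k$; then, with $t_{n+1}=ct$,
\begin{align*}
\mathcal{F}f_n(\cdot;0,ct)(\xi_1,\dots,\xi_n)=\int_{[0,ct]^n_<}\prod_{k=1}^n\overline{\mathcal{F}G(t_{k+1}-t_k,\cdot)(\eta_k)}\,\ud\vec{t}.
\end{align*}
Substituting $t_k=c\,s_k$ ($k=1,\dots,n$) turns $[0,ct]^n_<$ into $[0,t]^n_<$, produces a Jacobian $c^n$, and replaces each increment by $t_{k+1}-t_k=c(s_{k+1}-s_k)$; applying the second relation of \eqref{E:greenscale} to every factor (each contributing $c^{b+r-1}$) and using $c^{b/a}\eta_k=(c^{b/a}\xi_1)+\cdots+(c^{b/a}\xi_k)$ gives
\begin{align*}
\mathcal{F}f_n(\cdot;0,ct)(\xi_1,\dots,\xi_n)&=c^{n}c^{n(b+r-1)}\int_{[0,t]^n_<}\prod_{k=1}^n\overline{\mathcal{F}G(s_{k+1}-s_k,\cdot)(c^{b/a}\eta_k)}\,\ud\vec{s}\\
&=c^{n(b+r)}\,\mathcal{F}f_n(\cdot;0,t)(c^{b/a}\xi_1,\dots,c^{b/a}\xi_n).
\end{align*}
Since $\xi_j\mapsto c^{b/a}\xi_j$ acts identically on every coordinate, it commutes with the symmetrization over $\Sigma_n$, so the same identity holds with $f_n$ replaced by $\widetilde{f}_n$, which is \eqref{E:Ffnscale}.

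\textbf{Step 3 (the $\mathcal{H}^{\otimes n}$-norm) and the main obstacle.} Writing the norm spectrally, $\Norm{\widetilde{f}_n(\cdot,0,t)}_{\mathcal{H}^{\otimes n}}^2=\int_{(\R^d)^n}\bigl|\mathcal{F}\widetilde{f}_n(\cdot,0,t)(\xi_1,\dots,\xi_n)\bigr|^2\prod_{j=1}^n\varphi(\xi_j)\,\ud\xi_j$, and noting from \eqref{E:specfunction} that $\varphi$ is homogeneous of degree $-(d-\alpha)$, i.e. $\varphi(\lambda\xi)=\lambda^{-(d-\alpha)}\varphi(\xi)$ with $\alpha=\sum_i\alpha_i$, we apply \eqref{E:Ffnscale} with $c=t$ and then substitute $\eta_j=t^{b/a}\xi_j$, which contributes $\prod_j\ud\xi_j=t^{-ndb/a}\prod_j\ud\eta_j$ and $\prod_j\varphi(\xi_j)=t^{nb(d-\alpha)/a}\prod_j\varphi(\eta_j)$; collecting powers of $t$,
\begin{align*}
\Norm{\widetilde{f}_n(\cdot,0,t)}_{\mathcal{H}^{\otimes n}}^2&=t^{\,2n(b+r)-ndb/a+nb(d-\alpha)/a}\,\Norm{\widetilde{f}_n(\cdot,0,1)}_{\mathcal{H}^{\otimes n}}^2\\
&=t^{[2(b+r)-b\alpha/a]n}\,\Norm{\widetilde{f}_n(\cdot,0,1)}_{\mathcal{H}^{\otimes n}}^2,
\end{align*}
which is \eqref{E:normscale}. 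The only genuinely non-routine point is Step 1: one must pin down the exact joint homogeneity $\mathcal{F}G(t,\cdot)(\xi)=t^{b+r-1}\Phi(|\xi|^at^b)$, in particular the exponent $b+r-1$ coming from the interplay of the Caputo derivative of order $b$ and the extra fractional integration of order $r$. Once that is in hand, Steps 2 and 3 are elementary changes of variables whose only external input is the scale-homogeneity of $\varphi$ guaranteed by Assumption \ref{A:Noise}.
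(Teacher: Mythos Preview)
Your proof is correct and follows essentially the same route as the paper: the paper's proof is only a two-line sketch that points to the explicit expression of $\mathcal{F}G(t,\cdot)(\xi)$ in \cite[(4.8)]{CHN19} for \eqref{E:greenscale}, calls \eqref{E:Ffnscale} a change-of-variables exercise on \eqref{E:fweker}, and attributes \eqref{E:normscale} to the scaling of $\mu$, leaving all details to the reader. Your Steps 1--3 simply fill in those details (and indeed your Step 2 computation matches almost verbatim the one the paper carries out later in the Appendix when proving \eqref{E:fHn}).
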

\begin{proof}
	The scaling properties in \eqref{E:greenscale} are direct consequences of the explicit expression
	of $\mathcal{F}G(t,\cdot)(\xi)$ as in \cite[(4.8)]{CHN19}. Property \eqref{E:Ffnscale} is an easy
	exercise of change of variables on \eqref{E:fweker}. Property \eqref{E:normscale} is a direct consequence of
	\eqref{E:greenscale}, \eqref{E:Ffnscale}, and the scaling property of the spectral measure $\mu$.
	We leave the details for the interested readers.
\end{proof}

Finally, let us recall the following standard result about the existence and uniqueness of the
solution to \eqref{E:SPDE} (or \eqref{E:IntEq}) when it can be written as the Weiner chaos expansion
\eqref{E:solwexp}.

\begin{theorem}\label{T:2momseries}
	Fix any $T \in (0,\infty]$. Suppose that $f_n(\cdot,x,t) \in \mathcal{H}^{\otimes n}$ for any $t
	\in (0,T)$, $x \in R^d$ and $n \ge 1$. Then \eqref{E:SPDE} (or \eqref{E:IntEq}) has a unique
	$L^2(\Omega)$-solution on $(0,T) \times \R^d$ if and only if the series \eqref{E:solwexp}
	converges in $L^2(\Omega)$ for any $(t,x) \in (0,T) \times \R^d$, which is equivalent to the
	convergence of the series \eqref{E:2mom}. In this case, the solution is given by \eqref{E:solwexp}
	with the second moment given by
	\begin{equation}
		\label{E:2mom}
		\mathbb{E}\left(u(t,x)^2\right) = \sum_{n \ge 0} \theta^n n!  \Norm{\widetilde{f_n}(\cdot,x,t)}_{\mathcal{H}^{\otimes n}}^2 \quad \text{for all $(t,x) \in (0,T) \times \R^d$}.
	\end{equation}
\end{theorem}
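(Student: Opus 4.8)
The plan is to construct the solution by Picard iteration, identify the $m$th iterate with the $m$th partial sum of the chaos expansion \eqref{E:solwexp}, and then read off all the claimed equivalences from the orthogonality of the Wiener chaoses together with the closedness of the Skorohod integral. Concretely, I would set $u^{(0)}(t,x)\equiv 1$ and define recursively
\[
u^{(m+1)}(t,x)=1+\sqrt{\theta}\int_0^t\int_{\R^d}G(t-s,x-y)\,u^{(m)}(s,y)\,W(\delta y)\,\ud s.
\]
An induction on $m$, using the linearity of $\delta$ and the product form of the kernels, shows that $u^{(m)}(t,x)=1+\sum_{k=1}^m\theta^{k/2}I_k(f_k(\cdot,x,t))$ with the $f_k$ exactly the kernels displayed above \eqref{E:fweker}; the hypothesis $f_n(\cdot,x,t)\in\mathcal{H}^{\otimes n}$ ensures each $I_k(f_k)$ is a well-defined element of the $k$th Wiener chaos.

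Because distinct Wiener chaoses are orthogonal in $L^2(\Omega)$ and $\E[I_k(f_k)^2]=k!\,\|\widetilde{f_k}(\cdot,x,t)\|_{\mathcal{H}^{\otimes k}}^2$, one obtains
\[
\E\big[u^{(m)}(t,x)^2\big]=1+\sum_{k=1}^m\theta^k k!\,\big\|\widetilde{f_k}(\cdot,x,t)\big\|_{\mathcal{H}^{\otimes k}}^2.
\]
Hence for each fixed $(t,x)$ the sequence $(u^{(m)}(t,x))_m$ is Cauchy in $L^2(\Omega)$ precisely when the nonnegative series in \eqref{E:2mom} converges, and this is in turn equivalent to convergence of the series \eqref{E:solwexp} in $L^2(\Omega)$; in that case the limit equals \eqref{E:solwexp} and its second moment is \eqref{E:2mom}. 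This already establishes the equivalence of the last two conditions in the statement.

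It then remains to show that, under this convergence, the limit $u(t,x)$ is the unique $L^2(\Omega)$-solution on $(0,T)\times\R^d$. For existence, I would fix $s<t$ and $x$, note that by orthogonality $y\mapsto G(t-s,x-y)u^{(m)}(s,y)$ converges to $y\mapsto G(t-s,x-y)u(s,y)$ in $L^2(\Omega;\mathcal{H})$, and then, after integrating in $s$, invoke the closedness of $\delta:\mathrm{Dom}(\delta)\subset L^2(\Omega;\mathcal{H})\to L^2(\Omega)$ to pass to the limit inside the stochastic integral in the recursion for $u^{(m+1)}$; letting $m\to\infty$ yields \eqref{E:IntEq}, so $u$ is a mild, hence $L^2(\Omega)$, solution. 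For uniqueness, I would take any $L^2(\Omega)$-solution $v$, write its chaos expansion $v(t,x)=\sum_{k\ge 0}I_k(g_k(\cdot,x,t))$, observe that taking expectations in \eqref{E:IntEq} forces $g_0\equiv 1$ (since $\E[\delta(\cdot)]=0$), and then project \eqref{E:IntEq} onto the $k$th chaos — using that $\delta$ raises the chaos order by exactly one — to conclude that the $g_k$ obey the same recursion as $\theta^{k/2}f_k$, whence $v=u$. Combined with the previous paragraph, this gives all three equivalences and the moment formula \eqref{E:2mom}; the argument is the standard one, cf. \cite[Chapter 3]{Nualart18Book} and \cite{Minicourse09}.

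The one genuinely delicate point will be the limit exchange inside the Skorohod integral: one needs $G(t-s,x-\cdot)u^{(m)}(s,\cdot)\to G(t-s,x-\cdot)u(s,\cdot)$ in $L^2(\Omega;\mathcal{H})$ with enough uniformity in $s\in(0,t)$ to survive the $\ud s$-integration, followed by an appeal to the closedness of $\delta$. This is precisely where the hypothesis $f_n(\cdot,x,t)\in\mathcal{H}^{\otimes n}$ and the finiteness of \eqref{E:2mom} are actually used, and it is the step I would write out in full detail.
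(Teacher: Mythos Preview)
Your proposal is correct and is precisely the standard argument; the paper itself does not prove this theorem at all---it is introduced with ``let us recall the following standard result'' and stated without proof, implicitly deferring to references such as \cite{Nualart18Book} and \cite{Minicourse09}. Your sketch (Picard iterates equal chaos partial sums, orthogonality gives \eqref{E:2mom}, closedness of $\delta$ for existence, chaos-by-chaos matching for uniqueness) is exactly what those references do, so there is nothing to compare.
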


\subsection{Some asymptotics and variational constants}
\label{SS:Variation}

Recall that the correlation function $\gamma$ satisfies Assumption \ref{A:Noise} and that the
corresponding spectral measure is $\mu$; see Remark \ref{R:Noise}. Define
\begin{gather}
	\label{E:rho}
	\rho_{\nu,a}(\gamma) = \sup_{\Norm{f}_{L^2(\R^d)} =1} \int_{\R^d} \left[ \int_{\R^d} \frac{f(x+y)f(y)}{\sqrt{1+\frac{\nu}{2}|x+y|^a } \sqrt{1+\frac{\nu}{2}|y|^a} } \ud y \right]^2 \mu(\ud x)
\end{gather}
and
\begin{gather}
	\label{E:Lambda}
	\begin{aligned}
		\mathcal{M}_{a,d}(\gamma,\theta) & := \sup_{g \in \mathcal{F}_a} \left\{ \left( \iint_{\R^{2d}} g^2(x)g^2(y) \gamma(x+y)\ud x\ud y \right)^{1/2} - \frac{\theta}{2}\: \mathcal{E}_a(g,g) \right\} \\
                                     & = \sup_{g \in \mathcal{F}_a} \left\{ \left\langle g^2 * g^2, \gamma \right\rangle_{L^2(\R^d)}^{1/2} - \frac{\theta}{2}\: \mathcal{E}_a(g,g) \right\},
	\end{aligned}
\end{gather}
where
\begin{gather}
	\label{E:Egg}
	\mathcal{E}_a(g,g) := (2\pi)^{-d} \int_{\R^d} |\xi|^a |\mathcal{F}g(\xi)|^2 \ud \xi \quad \text{and} \\
	\mathcal{F}_a      := \left\{f\in L^2(\R^d): \: \Norm{f}_{L^2(\R^d)}=1,\: \mathcal{E}_a(f,f)<\infty \right\}.
	\label{E:Fa}
\end{gather}
We often omit the dimension $d$ in  $ \mathcal{M}_{a,d}$ when it is clear from context.  We use the
convention that $\mathcal{M}_a(f):=\mathcal{M}_a(f,1)$ to be consistent with notation \eqref{E:M1}.
By a similar argument as the proof of \cite[Lemma 2.3]{BCC21}, one can show that
\begin{equation}
	\label{E:ScaleM}
	\mathcal{M}_a(\Theta \gamma , \theta) =
	\Theta^{\frac{a}{2a-\alpha}}\theta^{-\frac{\alpha}{2a-\alpha}} \mathcal{M}_a( \gamma , 1),
	\qquad \text{for all $\theta$ and $\Theta>0 $.}
\end{equation}

For the Riesz kernel case (see Example \ref{Eg:NoiseEg}), Bass, Chen and Rosen \cite{BCR09}
established that when $a \in (0,2]$ and $\nu = 2$,
\begin{equation}
	\label{E:oldlim}
	\lim_{n \to \infty} \frac{1}{n} \log \left[ \frac{1}{(n!)^2} \int_{(\R^d)^n} \left( \sum_{\sigma \in \Sigma_n} \prod_{k=1}^n \frac{1}{1+ |\sum_{j=k}^n \xi_{\sigma(j)}|^a}\right)^2 \mu (\ud \vec{\xi})\right]
	= \log\left( \rho_{2,a}\left(|\cdot|^{-\alpha}\right) \right),
\end{equation}
and \footnote{In Theorem 1.5 or eq. (1.20) of Bass {\it et al} \cite{BCR09}, the factor
$(2\pi)^{-d}$ should not be present.}
\begin{equation}
	\label{E:oldrhovarrep}
	\rho_{2,a} \left(|\cdot|^{-\alpha}\right) = \mathcal{M}_a^{2-(\alpha/a)}\left(|\cdot|^{-\alpha},2\right).
\end{equation}
We first apply some scaling arguments to accommodate the parameter $\nu$ in both \eqref{E:oldlim}
and \eqref{E:oldrhovarrep},
the proof of which can be found in Appendix:

\begin{lemma}[The Riesz kernel case]
	\label{L:rhovar}
	If $\gamma(x) = |x|^{-\alpha}$ for some $\alpha \in (0,d)$, then for any $\nu >0 $ and $a \in
	(0,2]$,
	\begin{equation}
	\label{E:rhoLam}
		\rho_{\nu, a}\left(|\cdot|^{-\alpha}\right) = \left(\frac{\nu}{2}\right)^{-\alpha/a} \mathcal{M}_a^{2-(\alpha/a)}\left(|\cdot|^{-\alpha},2\right)
		= \nu^{-\alpha / a}\mathcal{M}_a^{2-(\alpha / a)}(|\cdot|^{-\alpha}),
	\end{equation}
	and
	\begin{equation}
		\label{E:LogRhoRiez}
		\lim_{n \to \infty} \frac{1}{n} \log \left[ \frac{1}{(n!)^2} \int_{(\R^d)^n} \left( \sum_{\sigma \in \Sigma_n} \prod_{k=1}^n \frac{1}{1+ \frac{\nu}{2}|\sum_{j=k}^n \xi_{\sigma(j)}|^a}\right)^2 \mu (\ud \vec{\xi})\right]
		= \log\left( \rho_{\nu, a}\left(|\cdot|^{-\alpha}\right) \right).
	\end{equation}
\end{lemma}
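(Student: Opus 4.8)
The plan is to derive \eqref{E:rhoLam} and \eqref{E:LogRhoRiez} from the already-established facts \eqref{E:oldlim} and \eqref{E:oldrhovarrep} (the case $\nu=2$), together with the scaling relation \eqref{E:ScaleM} for $\mathcal{M}_a$ and the scaling behaviour of the spectral measure $\mu$ of the Riesz kernel. The key observation is that for $\gamma(x)=|x|^{-\alpha}$ the spectral measure $\mu(\ud\xi)=C_{\alpha,d}|\xi|^{-(d-\alpha)}\ud\xi$ is homogeneous of degree $\alpha-d$, so a dilation $\xi\mapsto c\xi$ in the integrals against $\mu$ produces a clean power of $c$.

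First I would treat $\rho_{\nu,a}$: in the definition \eqref{E:rho} make the substitutions $y\mapsto \lambda y$ in the inner integral and $x\mapsto\lambda x$ in the outer $\mu$-integral, choosing $\lambda$ so that $\tfrac{\nu}{2}|\lambda x|^a$ becomes $|x|^a$, i.e. $\lambda=(\nu/2)^{-1/a}$. Tracking the Jacobians together with the constraint $\Norm{f}_{L^2}=1$ (which forces a compensating rescaling $f\mapsto \lambda^{d/2}f(\lambda\,\cdot)$, preserving the supremum over the unit sphere) and the homogeneity of $|\xi|^{-(d-\alpha)}$, one finds $\rho_{\nu,a}(|\cdot|^{-\alpha}) = (\nu/2)^{-\alpha/a}\,\rho_{2,a}(|\cdot|^{-\alpha})$. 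Combining this with \eqref{E:oldrhovarrep} gives the first equality in \eqref{E:rhoLam}; the second equality then follows from \eqref{E:ScaleM} with $\Theta=1$ and the relabelling $\theta=2$, namely $\mathcal{M}_a^{2-\alpha/a}(|\cdot|^{-\alpha},2) = (2^{-\alpha/(2a-\alpha)}\mathcal{M}_a(|\cdot|^{-\alpha},1))^{2-\alpha/a} = 2^{-\alpha/a}\mathcal{M}_a^{2-\alpha/a}(|\cdot|^{-\alpha})$, so that the factor $(\nu/2)^{-\alpha/a}2^{-\alpha/a}=\nu^{-\alpha/a}$ matches.

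For \eqref{E:LogRhoRiez} I would apply exactly the same dilation $\vec\xi\mapsto\lambda\vec\xi$, $\lambda=(\nu/2)^{-1/a}$, to the $n$-fold integral: each factor $(1+\tfrac{\nu}{2}|\sum\xi|^a)^{-1}$ turns into $(1+|\sum\xi|^a)^{-1}$, the measure $\mu(\ud\vec\xi) = \prod C_{\alpha,d}|\xi_j|^{-(d-\alpha)}\ud\vec\xi$ picks up the factor $\lambda^{nd}\cdot\lambda^{-n(d-\alpha)}=\lambda^{n\alpha}$, and the normalizing $\tfrac{1}{(n!)^2}$ is untouched. Thus the bracketed quantity equals $\lambda^{n\alpha}$ times the corresponding $\nu=2$ expression, so after taking $\tfrac1n\log(\cdot)$ and sending $n\to\infty$ the limit is $\alpha\log\lambda + \log\rho_{2,a}(|\cdot|^{-\alpha}) = \log\!\big((\nu/2)^{-\alpha/a}\rho_{2,a}(|\cdot|^{-\alpha})\big) = \log\rho_{\nu,a}(|\cdot|^{-\alpha})$ by what was just proved, using \eqref{E:oldlim} for the $\nu=2$ limit.

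The only mild obstacle is bookkeeping: making the change of variables in \eqref{E:rho} rigorously while keeping the $L^2$-normalization intact (one must check the supremum over $\{\Norm f_{L^2}=1\}$ is invariant under $f\mapsto\lambda^{d/2}f(\lambda\cdot)$, which is immediate), and keeping careful track of the constant $C_{\alpha,d}$ and the exponents so that the powers of $2$ and $\nu$ combine correctly to reconcile the $(\nu/2)$-form with the $\nu$-form in \eqref{E:rhoLam}. Since \eqref{E:oldlim}–\eqref{E:oldrhovarrep} are quoted as known, no new analytic estimate is needed — the argument is entirely a scaling computation, which is why the paper defers it to the appendix.
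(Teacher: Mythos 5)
Your proposal is correct and follows essentially the same route as the paper's appendix proof: a dilation $x\mapsto(\nu/2)^{-1/a}x$ (equivalently $x'=(\nu/2)^{1/a}x$) in the definition of $\rho_{\nu,a}$ and in the $n$-fold integral, using the degree-$(\alpha-d)$ homogeneity of the Riesz spectral density to pull out $(\nu/2)^{-\alpha/a}$, then invoking \eqref{E:oldlim}--\eqref{E:oldrhovarrep} for $\nu=2$ and \eqref{E:ScaleM} with $\Theta=1$, $\theta=2$ to reconcile the $(\nu/2)$-form with the $\nu$-form. Your exponent bookkeeping ($\alpha(2-\alpha/a)/(2a-\alpha)=\alpha/a$) checks out, so no gaps.
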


More generally we have the following theorem:

\begin{theorem}
	\label{T:Variation}
	For the correlation function $\gamma$	that satisfies Assumption \ref{A:Noise}, both
	\eqref{E:rhoLam} and \eqref{E:LogRhoRiez} hold with $|\cdot|^{-\alpha}$ and $\mu$ replaced by $\gamma$
	and  $\mu$ as in \eqref{E:specfunction}, respectively. More precisely, it holds that
	\begin{equation}
		\label{E:rhoMrep}
		\rho_{\nu,a}(\gamma) = \nu^{-\alpha/a} \mathcal{M}_a^{2-(\alpha / a)}(\gamma)<\infty,
	\end{equation}
	and
	\begin{equation}
		\label{E:LogRho}
		\lim_{n \to \infty} \frac{1}{n} \log \left[ \frac{1}{(n!)^2} \int_{(\R^d)^n} \left( \sum_{\sigma \in \Sigma_n} \prod_{k=1}^n \frac{1}{1+ \frac{\nu}{2}|\sum_{j=k}^n \xi_{\sigma(j)}|^a}\right)^2 \mu (\ud \vec{\xi})\right]
		= \log\left( \rho_{\nu, a}\left(\gamma\right) \right).
	\end{equation}
\end{theorem}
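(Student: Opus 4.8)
The plan is to reduce the general product-kernel case of Assumption \ref{A:Noise} to the already-established Riesz kernel case of Lemma \ref{L:rhovar} by a tensorization-and-comparison argument. First I would observe that the correlation function $\gamma(x) = \prod_{i=1}^k |x_{(i)}|^{-\alpha_i}$ factorizes across the $k$ coordinate blocks, and correspondingly the spectral density $\varphi(\xi) = \prod_{i=1}^k C_{\alpha_i,d_i}|\xi_{(i)}|^{-(d_i-\alpha_i)}$ factorizes as well. The key structural input is the convolution factorization $\gamma = K * K$ with $K$ as in \eqref{E:K}, which is itself a product over blocks; this lets one rewrite the quadratic form $\langle g^2 * g^2, \gamma\rangle_{L^2(\R^d)}$ appearing in $\mathcal{M}_{a,d}(\gamma,\theta)$ as $\Norm{K*g^2}_{L^2(\R^d)}^2$, a manifestly nonnegative quantity, and similarly express the $n$-fold integral on the left-hand side of \eqref{E:LogRho} in terms of $\mathcal{H}$-norms of the symmetrized kernels. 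For the finiteness claim $\mathcal{M}_a(\gamma) < \infty$ in \eqref{E:rhoMrep}, I would invoke the sub-multiplicativity of the spectral density under the block decomposition together with the standard Gagliardo–Nirenberg / Sobolev-type interpolation bounding $\langle g^2*g^2,\gamma\rangle^{1/2}$ by a constant times $\mathcal{E}_a(g,g)^{\alpha/(2a)}\Norm{g}_{L^2}^{2-\alpha/a}$, which is exactly the scaling exponent forced by \eqref{E:ScaleM}; Young's inequality then absorbs the gradient term, and the condition $\alpha < 2a$ (implied by $\alpha < d$ together with the running hypotheses, or assumed directly) guarantees a finite supremum.

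The heart of the matter is the moment-asymptotic identity \eqref{E:LogRho}. Here I would follow the strategy behind \eqref{E:oldlim}–\eqref{E:oldrhovarrep} of Bass, Chen and Rosen \cite{BCR09}, proving matching upper and lower bounds for the $\limsup$ and $\liminf$ of $\frac1n \log[\cdots]$. For the upper bound, one symmetrizes, applies the Cauchy–Schwarz inequality in the time variables hidden inside the product $\prod_k (1+\frac\nu2|\sum_{j=k}^n\xi_{\sigma(j)}|^a)^{-1}$, and then recognizes the resulting expression as the $n$-th power (up to sub-exponential factors) of the operator norm of the self-adjoint operator on $L^2(\R^d)$ with kernel built from $(1+\frac\nu2|\xi|^a)^{-1/2}$ and $\varphi$; that operator norm is precisely $\rho_{\nu,a}(\gamma)$ by \eqref{E:rho}. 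The lower bound is obtained by restricting the supremum defining the operator norm to a near-optimal test function $g\in\mathcal{F}_a$ and propagating it through the $n$-fold integral, which is a routine but careful Feynman–Kac-free estimate. Finally the variational representation \eqref{E:rhoMrep} itself follows from \eqref{E:oldrhovarrep} in the Riesz case plus the scaling \eqref{E:ScaleM}, extended to product kernels: the supremum in $\rho_{\nu,a}(\gamma)$ is computed by a Lagrange-type optimization that, after the substitution $\xi \mapsto (\nu/2)^{-1/a}\xi$, separates into the $\mathcal{E}_a$ energy penalty and the $\langle g^2*g^2,\gamma\rangle$ gain, yielding exactly $\nu^{-\alpha/a}\mathcal{M}_a^{2-\alpha/a}(\gamma)$.

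The main obstacle I anticipate is the passage from the Riesz kernel case to the general product case in a way that keeps the variational constant exact rather than merely comparable. For a single Riesz factor the calculation of \cite{BCR09} applies verbatim; for a product of $k$ such factors one cannot simply take $k$-th powers, because the test function $g$ on $\R^d$ need not be a tensor product and the energy form $\mathcal{E}_a$ does not split across blocks. I expect to handle this by showing that the supremum defining $\mathcal{M}_{a,d}(\gamma)$ is nonetheless attained (or approached) in the right scaling class, using the homogeneity of all the ingredients — $|\xi|^a$, the block-homogeneous $\varphi$, and the $L^2$ constraint — so that a concentration-compactness or direct-method argument pins down the exponent $2-\alpha/a$ regardless of the non-tensorial structure of the optimizer. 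A secondary technical point is justifying the interchange of limit and supremum (i.e., that the $\limsup$ and $\liminf$ in \eqref{E:LogRho} genuinely coincide with $\log\rho_{\nu,a}(\gamma)$ and not merely bracket it), which I would settle by a standard super-/sub-additivity argument in $n$ applied to $\log$ of the bracketed quantity, exactly as in the proof of \eqref{E:oldlim}. I would relegate the more computational verifications — the change of variables producing \eqref{E:ScaleM}, the explicit constants $C_{\alpha,d}$, $\beta_{\alpha,d}$, and the Sobolev interpolation constant — to the appendix.
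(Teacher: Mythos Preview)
Your proposal is on the right track in that both you and the paper take the Bass--Chen--Rosen \cite{BCR09} machinery as the backbone for \eqref{E:LogRho} and \eqref{E:rhoMrep}. However, the mechanism you propose for extending from the pure Riesz kernel to the product kernel of Assumption~\ref{A:Noise} is different from the paper's, and the obstacle you yourself flag is real and not resolved by what you sketch.

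You frame the extension as a ``tensorization-and-comparison'' argument, hoping to reduce the product case to the single-Riesz case of Lemma~\ref{L:rhovar}. You then correctly observe that this cannot give the \emph{exact} constant, since the energy form $\mathcal{E}_a(g,g)$ uses the full Euclidean norm $|\xi|^a$ and does not split across the $k$ blocks, and the optimizing $g$ need not be a tensor product. Your proposed fix --- appealing to homogeneity and a concentration-compactness argument to ``pin down the exponent $2-\alpha/a$'' --- only recovers the scaling exponent, not the value of $\mathcal{M}_a(\gamma)$, so it would leave \eqref{E:rhoMrep} as a two-sided bound rather than an identity. In short, the tensorization route does not close.

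The paper avoids this entirely by \emph{not} reducing to the Riesz case. Instead it re-runs the proofs of \cite{BCR09} verbatim with the product spectral density $\varphi$ in place of the Riesz density, making two targeted modifications. First, the finiteness of $\rho_{\nu,a}(\gamma)$ (their Lemma~1.6) and the inequality underlying $\mathcal{M}_a(\gamma)<\infty$ (their (7.1)) relied on Sobolev's inequality, which is tied to a single Riesz kernel; the paper replaces this by the weak Young inequality, showing $\varphi\in L^{d/(d-\alpha)}_w(\R^d)$ directly from the block structure (Lemma~\ref{L:rhofinite}). Second, the upper-bound argument in Section~5 of \cite{BCR09} used a convolution-square-root decomposition of the Riesz spectral density; the paper supplies the analogous block-wise decomposition $\varphi=\mathcal{P}*\mathcal{P}$ with $\mathcal{P}(\xi)=\prod_i\sqrt{C_{\alpha_i,d_i}}\,\beta_{d_i-\alpha_i,d_i}|\xi_{(i)}|^{-(d_i-\alpha_i/2)}$ and the corresponding mollified family $\mathcal{P}_{\beta,\epsilon}$. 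With those two substitutions the entire \cite{BCR09} argument (Sections~3,~5--7) goes through unchanged, yielding the exact identity \eqref{E:rhoMrep} without ever needing to tensorize or compare to the single-Riesz constant. Your Sobolev-interpolation and operator-norm heuristics are morally the same as what \cite{BCR09} does, so once you swap in the weak Young inequality and the block-wise $\mathcal{P}$-decomposition, your outline would match the paper's.
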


\begin{remark}
	\label{R:Constant}
	It is often very difficult to obtain the exact value for the variational constant
	$\mathcal{M}_{a,d}(\gamma)$. To the best of our knowledge, only in case of $a=2$ and $\alpha=d=1$
	(white noise), one can compute explicitly that
	\begin{align}
		\label{E:Mdelta1d}
		\mathcal{M}_{2,1}(\delta_0)= (3/4)(1/6)^{1/3},
	\end{align}
	which is a consequence of Chen and Li \cite[Lemma 7.2]{ChenLi04} with $p=2$. When $d\ge 2$, the
	value of $\mathcal{M}_{2,d}(\delta_0)$ can be expressed using the best constant for the classical
	{\it Gagliardo-Nirenberg} inequality; see Remark 3.13 of Chen {\it et al} \cite{CDOT20} for more
	details.
\end{remark}

\begin{proof}[Sketch of the proof of Theorem \ref{T:Variation}]
	The proof of this theorem follows essentially the identical proof as Bass {\it et al}
	\cite{BCR09}, which is exclusively for the Riesz kernel. One simplification is that we only need
	to handle the case $p=2$ thanks to the hypercontractivity property. For our slight extension to
	the noise given in Assumption \ref{A:Noise}, there is no need to repeat their paper. Instead we
	will only point out the differences and necessary changes. For your convenience, the
	correspondence of parameters between Bass {\it et al} \cite{BCR09} and the current paper is listed
	in the following Table \ref{Tb:Notation}.

	\begin{table}[htpb]
		\centering
		\caption{Notation correspondence.}
		\label{Tb:Notation}
		\renewcommand{\arraystretch}{1.2}
		\begin{tabular}{c}
	\renewcommand{\arraystretch}{1.2}
	\begin{tabular}{|c|c|c|c|c|c|c|c|}
		\hline
                                  & \multicolumn{2}{c|}{Laplace} & \multicolumn{3}{c|}{Noise} & Moment   & Variational Const.  \\ \hline
		Bass {\it et al} \cite{BCR09} & $\beta$                      & $2$                        & $\sigma$ & $|\cdot|^{-\sigma}$  & $\varphi_{d-\sigma}$ & $p$ & $\Lambda_\sigma$                                \\
		Current paper                 & $a$                          & $\nu$                      & $\alpha$ & $\gamma(\cdot)$      & $\varphi$            & $2$ & $\mathcal{M}_a\left(|\cdot|^{-\alpha},2\right)$ \\ \hline
	\end{tabular}
		\end{tabular}
	\end{table}

	Theorem  \ref{T:Variation} is proven by showing the following claims: for $\gamma$ given in
	Assumption  \ref{A:Noise},
	\begin{enumerate}[(i)]
		\item $\rho_{\nu,a}(\gamma)<\infty$;
		\item $\liminf_{n \to \infty} \frac{1}{n} \log \left[ \frac{1}{(n!)^2} \int_{(\R^d)^n} \left( \sum_{\sigma \in \Sigma_n} \prod_{k=1}^n \frac{1}{1+ \frac{\nu}{2}|\sum_{j=k}^n \xi_{\sigma(j)}|^a}\right)^2 \mu (\ud \vec{\xi})\right] \ge \log\left( \rho_{\nu, a}\left(\gamma\right) \right)$;
		\item $\limsup_{n \to \infty} \frac{1}{n} \log \left[ \frac{1}{(n!)^2} \int_{(\R^d)^n} \left( \sum_{\sigma \in \Sigma_n} \prod_{k=1}^n \frac{1}{1+ \frac{\nu}{2}|\sum_{j=k}^n \xi_{\sigma(j)}|^a}\right)^2 \mu (\ud \vec{\xi})\right] \le \log\left( \rho_{\nu, a}\left(\gamma\right) \right)$;
		\item $\mathcal{M}_{a}(\gamma)<\infty$;
		\item $\rho_{\nu,a}(\gamma) = \nu^{-\alpha/a} \mathcal{M}_a^{2-(\alpha / a)}(\gamma)$.
	\end{enumerate}

	Part (i) which corresponds to Lemma 1.6 ({\it ibid.}) is established by Lemma \ref{L:rhofinite}
	below.

	Following exactly the same arguments as those in Section 3 ({\it ibid.}) with $\varphi_{d-\sigma}$
	({\it ibid.}) replaced by our $\varphi$ as in \eqref{E:specfunction}, one can prove part (ii) for
	$\nu=2$. Then an application of the scaling property as the proof of Lemma \ref{L:rhovar} shows
	the general case $\nu >0$.

	The proof of the upper bound, namely part (iii), is more challenging. This part corresponds to
	Sections 5 and 6 ({\it ibid.}). By examining these two sections carefully, we need to make some
	changes in Section 5 ({\it ibid.}), where as the arguments in Section 6 ({\it ibid.}) follow unchanged.
	For Section 5 ({\it ibid.}), we need to use the following decomposition of $\varphi$ as opposed to (5.4)
	({\it ibid.}):
	\begin{align*}
		\varphi(\xi) = \prod_{i=1}^k C_{\alpha_i,d_i} |\xi_{(i)}|^{-(d_i-\alpha_i)}
		= \prod_{i=1}^k C_{\alpha_i,d_i} (\mathcal{P}_i*\mathcal{P}_i) \left(\xi_{(i)}\right),
	\end{align*}
	with
	\begin{align*}
		\mathcal{P}_i\left(\xi_{(i)}\right)=\beta_{d_i-\alpha_i,d_i}\left|\xi_{(i)}\right|^{-(d_i-\alpha_i / 2)};
	\end{align*}
	see \eqref{E:noiseconstant} for the constants. Or equivalently,
	\begin{align*}
		\varphi(\xi)      = \left(\mathcal{P} * \mathcal{P}\right)(\xi) \quad \text{with} \quad
		\mathcal{P}(\xi) := \prod_{i=1 }^{k} \sqrt{C_{\alpha_i,d_i}}\: \beta_{d_i-\alpha_i,d_i} \left|\xi_{(i)}\right|^{-(d_i-\alpha_i / 2)}.
	\end{align*}
	Now (5.5) ({\it ibid.}) should be written as
	\begin{align*}
		\mathcal{P}_{\beta,\epsilon}(\xi) = \hat{h}(\epsilon \xi) \prod_{i=1 }^{k} \frac{\sqrt{C_{\alpha_i,d_i}}\: \beta_{d_i-\alpha_i,d_i} }{\beta+\left|\xi_{(i)}\right|^{-(d_i-\alpha_i / 2)}},
		\quad \text{for all $\beta,\epsilon \ge 0$,}
	\end{align*}
	where $h(\cdot)$ is defined in (5.2) ({\it ibid.}).  So $\mathcal{P}(\xi) =
	\mathcal{P}_{0,0}(\xi)$ and
	\begin{align*}
		\left(\mathcal{P}_{\beta,\epsilon}*\mathcal{P}_{\beta,\epsilon}\right)(\xi) \le
		\left(\mathcal{P}_{\beta,0}*\mathcal{P}_{\beta,0}\right)(\xi) \le
		\left(\mathcal{P}_{0,0}*\mathcal{P}_{0,0}\right)(\xi) = \varphi(\xi);
	\end{align*}
	see (5.6) ({\it ibid.}). With these changes, one can update accordingly the proof of Lemma 5.1
	({\it ibid.}) without any difficulty. Then the rest of Section 5 ({\it ibid.}) follows
	unchanged. In this way, we establish part (iii) for $\nu=2$. Finally, a scaling argument as in
	part (ii) proves part (iii) for all $\nu>0$.

	Parts (iv) and (v) correspond to Section 7 ({\it ibid.}). In particular, part (iv) corresponds to
	Lemma 7.1 ({\it ibid.}). Note that we only need to study the case $p=2$. By
	\eqref{E:fracrhofinite} with $\varphi(x-y)$ replaced by  $\gamma(x-y)$, we see that
	\begin{align}
		\label{E:(7.1)}
		\begin{aligned}
			\int_{(\R^d)^2} g^2(x)g^2(-y) \gamma(x-y) \ud x \ud y & \le  C \Norm{\tilde{g}^2}_{L^{2d/(2d-\alpha)}(\R^d)} \Norm{g^2}_{L^{2d/(2d-\alpha)}(\R^d)} \\
																														& =    C \Norm{g}_{L^{4d/(2d-\alpha)}(\R^d)}^4,
		\end{aligned}
	\end{align}
	where $\tilde{g}^2(x) = g(-x)$. Thus equation (7.1)  ({\it ibid.}) can be applied in our setting . The rest of the proof of Lemma 7.1 ({\it
	ibid.}) remains unchanged.

	It remains to update the proof of Theorem 1.5 in Section 7 ({\it ibid.}). For this, one needs only
	to update the four appearances of $1/(|\cdot|^\sigma)$ in (7.15), (7.22) and (7.23) ({\it ibid.}) to
	$\gamma(\cdot)$. Note that the factor $(2\pi)^{-d(p+1)}$ in the first equation  of (7.22) ({\it
	ibid.}) should be $(2\pi)^{-dp}$. With this, we complete the sketch proof of Theorem
	\ref{T:Variation}.
\end{proof}

Note that the proof of \cite[Lemma 1.6]{BCR09} relies on inequality (1.27) on p.  630 ({\it ibid.}),
which was a consequence of {\it Sobolev's inequality}. For the more general noises studied in this
paper, we can no longer apply this inequality. Instead, we prove the following lemma using the {\it
weak Young's inequality} (see, e.g., \cite[p.107]{LiebLoss01}) as a generalization of Lemma 1.6
({\it ibid.}).  Even though we only need the case $p=2$, the following lemma is proven for all $p\ge
2$.

\begin{lemma}
	\label{L:rhofinite}
	For any $f,g,h$ with $h\ge0$, for $\varphi$ given as in \eqref{E:specfunction} (see also Assumption
	\ref{A:Noise}), and for all $p\ge 2$, it holds that
	\begin{align*}
		\left( \int_{\R^d} \left[ \int_{\R^d} \dfrac{|f(x+y)g(y)|}{\sqrt{h(x+y)}\sqrt{h(y)}} \ud y \right]^p \varphi(x) \ud x \right)^{1/p}
		\le C \Norm{f}_{L^2(\R^d)} \Norm{g}_{L^2(\R^d)} \Norm{h^{-1}}_{L^{pd/\alpha}(\R^d)}.
	\end{align*}
\end{lemma}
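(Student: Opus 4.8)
The plan is to reduce the claimed inequality to a chain of two applications of standard integral inequalities: Young's convolution inequality (in its weak form) to handle the $\varphi(x)$-weighted outer integral, and Hölder's inequality to separate the three factors $f$, $g$, and $h^{-1}$. First I would rewrite the inner integral as a convolution: setting $F(x) = |f(x)|\,h^{-1/2}(x)$ and $G(y) = |g(y)|\,h^{-1/2}(y)$, the inner quantity is $\int_{\R^d} F(x+y)G(y)\,\ud y = (F \ast \widetilde G)(x)$ where $\widetilde G(y) = G(-y)$. So the left-hand side is $\big(\int_{\R^d} |(F\ast\widetilde G)(x)|^p \varphi(x)\,\ud x\big)^{1/p} = \Norm{(F\ast\widetilde G)\,\varphi^{1/p}}_{L^p(\R^d)}$.

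Next I would bound the $\varphi^{1/p}$-weighted $L^p$ norm. From \eqref{E:specfunction}, $\varphi(\xi) = \prod_i C_{\alpha_i,d_i}|\xi_{(i)}|^{-(d_i-\alpha_i)}$, so $\varphi^{1/p}(\xi) = c\prod_i |\xi_{(i)}|^{-(d_i-\alpha_i)/p}$, which is a product of functions each lying in the weak-$L^{q_i}$ space $L^{q_i,\infty}(\R^{d_i})$ with $q_i = p d_i/(d_i-\alpha_i)$; by the product structure $\varphi^{1/p}$ lies in the appropriate weak-$L^q$ space on $\R^d$ with $1/q = \sum_i (d_i-\alpha_i)/(pd_i)\cdot$(weights)... more precisely, since each coordinate block contributes $|\xi_{(i)}|^{-(d_i-\alpha_i)/p}$ and $(d_i-\alpha_i)/p < d_i$, one has $\varphi^{1/p} \in L^{d p/\alpha,\infty}(\R^d)$ because the total homogeneity degree is $-\sum_i(d_i-\alpha_i)/p = -(d-\alpha)/p$, matching the weak-$L^s$ exponent $s = d/((d-\alpha)/p) = dp/(d-\alpha)$. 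Hmm — let me instead apply the weak Young inequality directly: $\Norm{(F\ast \widetilde G)\varphi^{1/p}}_{L^p} \le \Norm{F\ast\widetilde G}_{L^{s'}}\Norm{\varphi^{1/p}}_{L^{r,\infty}}$ by Hölder with weak spaces, and then $\Norm{F\ast\widetilde G}_{L^{s'}} \le \Norm{F}_{L^2}\Norm{\widetilde G}_{L^{t}}$ by Young, choosing exponents so that everything matches; the exponents are forced and one checks $0 < \alpha < d$ guarantees they are admissible. I would carry the bookkeeping of exponents carefully but it is routine once the structure is fixed.

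Finally, each of $\Norm{F}_{L^2}$ and $\Norm{G}_{L^t}$ is split by Hölder's inequality: $\Norm{F}_{L^2} = \Norm{|f|h^{-1/2}}_{L^2} \le \Norm{f}_{L^2}\Norm{h^{-1/2}}_{L^\infty}^{?}$ — no, rather one keeps $\Norm{f}_{L^2}$ paired with a piece of $h^{-1}$ via Hölder with conjugate exponents tuned so the $h^{-1}$ pieces recombine to $\Norm{h^{-1}}_{L^{pd/\alpha}}$. Concretely, write $|f|h^{-1/2} = |f|\cdot h^{-1/2}$ and apply Hölder with exponents $(2\cdot\frac{pd/\alpha}{pd/\alpha - \text{something}}, \dots)$; the exponents are again determined by the requirement that the two half-powers of $h^{-1}$ coming from the $f$-factor and the $g$-factor add up to a single $L^{pd/\alpha}$ norm of $h^{-1}$. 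I expect the main obstacle to be purely the exponent arithmetic: verifying that the Young/Hölder triple of exponents is simultaneously admissible (all in $(1,\infty)$, reciprocals summing correctly) precisely under the standing hypothesis $\alpha = \sum\alpha_i \in (0,d)$ and $p \ge 2$, and confirming that the product (tensor) structure of $\varphi$ lets one treat it as a single weak-$L^{pd/\alpha}$ function rather than having to argue block-by-block. Once the exponents check out, assembling the three estimates gives the constant $C$ (depending on $p$, $d$, the $\alpha_i$, and the $C_{\alpha_i,d_i}$) and completes the proof.
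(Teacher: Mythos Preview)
Your proposal identifies the right two ingredients --- the weak Young (Hardy--Littlewood--Sobolev) inequality and the membership $\varphi\in L^{d/(d-\alpha),\infty}(\R^d)$ --- but the way you assemble them has a genuine gap.

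The step ``$\Norm{(F\ast\widetilde G)\,\varphi^{1/p}}_{L^p}\le \Norm{F\ast\widetilde G}_{L^{s'}}\Norm{\varphi^{1/p}}_{L^{r,\infty}}$ by H\"older with weak spaces'' is not valid in general: H\"older with one factor in a weak Lebesgue space only gives a weak-$L^p$ output (or requires the other factor in the Lorentz space $L^{s',1}$, which is strictly smaller than $L^{s'}$). So you cannot recover a strong $L^p$ bound this way. The subsequent splitting is also problematic: with $F=|f|\,h^{-1/2}$, applying H\"older to $\Norm{F}_{L^2}$ so as to extract $\Norm{f}_{L^2}$ forces the complementary factor $h^{-1}$ into $L^\infty$, which is not assumed.

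The paper avoids both issues by first reducing (quoting the argument of \cite[Lemma~1.6]{BCR09}) to the bilinear form
\[
\iint_{\R^{2d}} F(y)\,G(x)\,\varphi(x-y)\,\ud y\,\ud x
\quad\text{with}\quad F=\frac{|f|}{h^{p/2}},\ G=\frac{|g|}{h^{p/2}},
\]
and then applying the \emph{trilinear} weak Young inequality directly with exponents $p=r=2d/(d+\alpha)$, $q=d/(d-\alpha)$; this yields a strong bound with no Lorentz-space subtleties. Note the $h^{-p/2}$ (not $h^{-1/2}$) in $F,G$: the passage from the $p$th power of the inner integral to this bilinear form is exactly where the $p$ enters, and it is this reduction that your direct route is missing.

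Finally, you flag ``confirming that the product structure of $\varphi$ lets one treat it as a single weak-$L^{q}$ function'' as a bookkeeping detail, but in the paper this is the main technical content: one computes $|A_R|^{-\alpha/d}\int_{A_R}\varphi$ explicitly over products of balls $A_R=\prod_i B_{R,d_i}(0)$ and checks the result is bounded independently of $R$. (Also, watch the exponents: $\varphi\in L^{d/(d-\alpha),\infty}$, equivalently $\varphi^{1/p}\in L^{pd/(d-\alpha),\infty}$; the exponent $pd/\alpha$ appearing in the final statement is for $h^{-1}$, not for $\varphi$.)
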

\begin{proof}
	By observing the proof of Lemma 1.6 of \cite{BCR09}, we only need to prove that
	\begin{equation}
		\label{E:fracrhofinite}
		\int_{\R^d}\int_{\R^d} F(y)G(x)\varphi\left(x-y\right) \ud y \ud x
		\le C \Norm{F}_{L^{2d/(d+\alpha)}(\R^d)} \Norm{G}_{L^{2d/(d+\alpha)}(\R^d)},
	\end{equation}
	where
	\begin{align*}
		F(x) = \frac{|f(x)|}{(h(x))^{p/2}} \qquad \text{and} \qquad
		G(x) = \frac{|g(x)|}{(h(x))^{p/2}}.
	\end{align*}
	Note that when $\varphi(x)=C |x|^{-(d-\alpha)}$, \eqref{E:fracrhofinite} is nothing but (1.27)
	({\it ibid.}). Here we need to handle more general $\varphi$ as given in \eqref{E:specfunction}.
	To prove \eqref{E:fracrhofinite}, we need to apply the {\it weak version of Young's inequality}
	(see, e.g., \cite[eq. (7) on p. 107]{LiebLoss01}), which says that for all  $p,q,r >1$ with $1/p +
	1/q + 1/r =2$, it holds that
	\begin{align}
		\label{E:WeakYoung}
		\left|	\int_{\R^d}\int_{\R^d}a(x)b(x-y)c(y) \ud x \ud y \right|
		\le K_{p,q,r,d} \Norm{a}_{L^p(\R^d)}\Norm{b}_{q,w}\Norm{c}_{L^r(\R^d)},
	\end{align}
	where
	\begin{align*}
		\Norm{b}_{q,w} := \sup_{A} |A|^{-1/q'} \int_A |f(x)| \ud x, \qquad
		\text{with $1/q + 1/(q')=1$},
	\end{align*}
	and $A$ is an arbitrary Borel set of finite measure $|A|<\infty$.
	Now we apply \eqref{E:WeakYoung} with
	\begin{align*}
		a = F, \quad
		c = G, \quad
		b = \varphi, \quad
		p = r=2d/(d+\alpha), \quad \text{and} \quad
		q = \frac{d}{d-\sum_{j=1}^d\alpha_j }
      = d/(d-\alpha).
	\end{align*}
	By the proof of \cite[Lemma 1.6]{BCC21}, it suffices to prove that $\Norm{\varphi}_{q,w}$ is
	finite with $q = d/(d-\alpha)$ and $1/q' = 1 - 1/q = \alpha/d$.  \bigskip

	Recall that according to Assumption \ref{A:Noise}, the $d$ coordinates are partitioned into $k$
	groups.  Define $A_R := A_1 \times \cdots \times A_k$ where $A_i = B_{R,d_i}(0)$ is the ball in
	$\R^{d_i}$ centered at the origin with radius $R$.  With this we have that
	\begin{equation}
		\label{E:int}
		\int_{A_i} |x_{(i)}|^{-(d_i-\alpha_i)} \ud x_{(i)}
		= |\mathbb{S}^{d_i - 1}|\: \dfrac{R^{\alpha_i}}{\alpha_i},
	\end{equation}
	where we have used polar coordinated to calculate the integral and
	$\left|\mathbb{S}^{d_i-1}\right| = 2\pi^{d_i / 2}/\Gamma(d_i/2)$ is the surface area of the unit
	sphere in $\R^{d_i}$ (clearly, when $d_i=1$,  $|S^0|=2$). Moreover, by the formula for the volume
	of balls in $\R^{d_i}$, we see that
	\begin{equation}
		\label{E:leb}
		|A_i| = \frac{\pi^{d_i /2}}{\Gamma\left(1+ \frac{d_i}{2}\right)} R^{d_i}
          = |\mathbb{S}^{d_i - 1}|\dfrac{R^{d_i}}{d_i}.
	\end{equation}
	Recall that $1/q' = \alpha/d$. Then a combination of \eqref{E:int}, \eqref{E:leb} and
	\eqref{E:specfunction} shows that
	\begin{align*}
		|A_R|^{-1/q'}	\int_{A_R} \varphi(x) \ud x
    & = \prod_{i=1}^k C_{\alpha_i,d_i} |A_i|^{-1/q'} \int_{A_i} |x_{(i)}|^{-(d_i-\alpha_i)} \ud x_{(i)}                       \\
    & = \prod_{i=1}^k C_{\alpha_i,d_i} \alpha_i^{-1}|S^{d_i-1}|^{1-\alpha/d} R^{\alpha_i- \frac{d_i}{d}\alpha} d_i^{\alpha/d} \\
    & = \prod_{i=1}^k C_{\alpha_i,d_i} \alpha_i^{-1}|S^{d_i-1}|^{1-\alpha/d} d_i^{\alpha/d} =: K,
	\end{align*}
	where the constants $C_{\alpha_i,d_i}$ are defined in \eqref{E:noiseconstant} and the final
	constant $K$ does not depend on $R$. Finally, by symmetry of $\varphi$, we have that
	\begin{align}
		\label{E:WeakNorm}
		\Norm{\varphi}_{q,w} = \sup_{R>0}	|A_R|^{-1/q'}	\int_{A_R} \varphi(x) \ud x = K < \infty.
	\end{align}
	Hence, $\varphi \in L_{q,w}(\R^d)$ with $q = d/(d-\alpha)$. This completes the proof of Lemma \ref{L:rhofinite}.
\end{proof}

\begin{remark}
	Note that when there is only one partition (i.e., $k=1$), or equivalently when $\gamma$ itself is
	the Riesz kernel, by \cite[(6) on p.  107]{LiebLoss01}, we see that
	\begin{align*}
		\Norm{|\cdot|^{-(d-\alpha)}}_{\frac{d}{d-\alpha},w} = \alpha^{-1} |S^{d-1}|^{1-\alpha/d} d^{\alpha/d},
	\end{align*}
	which is consistent with the norm we find in \eqref{E:WeakNorm} up to a constant $C_{\alpha,d}$.
\end{remark}

In order to compare our results with known results, let us introduce another commonly used
variational constant
\begin{align}
	\label{E:VarConstE}
	\begin{aligned}
		\mathbf{E}_{a,d}(\gamma,\theta) & := \sup_{g \in \mathcal{F}_a} \left\{ \iint_{\R^{2d}} g^2(x)g^2(y) \gamma(x+y)\ud x\ud y - \frac{\theta}{2}\: \mathcal{E}_a(g,g) \right\} \\
                                    & = \sup_{g \in \mathcal{F}_a} \left\{ \left\langle g^2 * g^2, \gamma \right\rangle_{L^2(\R^d)} - \frac{\theta}{2}\: \mathcal{E}_a(g,g) \right\}.
	\end{aligned}
\end{align}
By using the same techniques used to derive \eqref{E:ScaleM}, one can show that for any $\Theta >0$ and $\theta>0$ that
\begin{equation}\label{E:ScaleE}
		\mathbf{E}_{a,d}(\Theta\gamma,\theta) =\Theta^{\frac{a}{a-\alpha}} \theta^{-\frac{\alpha}{a-\alpha}}  \mathbf{E}_{a,d}(\gamma,1).
\end{equation}
The relation between $\mathbf{E}_{a,d}(\gamma,\theta)$ and $\mathcal{M}_{a,d}(\gamma,\theta)$ can
be established in a similar way as \cite[Lemma A.2]{CHSX15AIHP}, which is stated in the following
lemma:

\begin{lemma}
	\label{L:EvsM}
	Under Assumption \ref{A:Noise}, the following three expressions hold:
	\begin{gather}
		\label{E:Erelation}
		\mathbf{E}_{a,d}(\Theta\gamma,\theta)  = \Theta^{\frac{a}{a-\alpha}} \theta^{-\frac{\alpha}{a-\alpha}}\left(\frac{2\alpha}{a}\right)^{\alpha/(a-\alpha)}\frac{a-\alpha}{a}\sigma(a, d,\alpha)^{a/(a-\alpha)},        \\
		\label{E:Mrelation}
		\mathcal{M}_{a,d}(\Theta\gamma,\theta) = \Theta^{\frac{a}{2a-\alpha}}\theta^{-\frac{\alpha}{2a-\alpha}} \left( \frac{\alpha}{a}\right)^{\alpha/(2a-\alpha)} \frac{2a-\alpha}{2a}\sigma(a, d,\alpha)^{a/(2a-\alpha)}, \\
		\label{E:EvsM}
		\mathbf{E}_{a,d}(\Theta\gamma,\theta)  = \left( \frac{a-\alpha}{a}\right) 2^{\alpha/(a-\alpha)}\left( \frac{2a-\alpha}{2a} \right)^{-(2a-\alpha)/(a-2)} \mathcal{M}_{a,d}(\Theta\gamma,\theta)^{\frac{2a-\alpha}{a-\alpha}}.
	\end{gather}
\end{lemma}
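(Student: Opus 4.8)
The plan is to follow the strategy of \cite[Lemma A.2]{CHSX15AIHP}: a one‑parameter dilation of the test functions reduces each of the two constrained variational problems defining $\mathbf{E}_{a,d}$ and $\mathcal{M}_{a,d}$ to an explicit one‑dimensional maximization, after which \eqref{E:Erelation}, \eqref{E:Mrelation} and \eqref{E:EvsM} all follow by bookkeeping of exponents. It is enough to treat the case $\Theta=\theta=1$, since the general parameters are then recovered at once from the homogeneity relations \eqref{E:ScaleE} and \eqref{E:ScaleM} (equivalently, from the linearity of $\langle g^2*g^2,\Theta\gamma\rangle_{L^2(\R^d)}$ in $\Theta$ together with the $\theta$‑dependence of the one‑dimensional maximizer below). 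Here $\sigma(a,d,\alpha)$ denotes the scale‑invariant constant which, by homogeneity, can be written $\sigma(a,d,\alpha)=\sup_{g\in\mathcal{F}_a}\langle g^2*g^2,\gamma\rangle_{L^2(\R^d)}\,\mathcal{E}_a(g,g)^{-\alpha/a}$.

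First I would record the scaling of the three ingredients. For $g\in\mathcal{F}_a$ and $\lambda>0$, set $g_\lambda(x):=\lambda^{d/2}g(\lambda x)$. Then $\Norm{g_\lambda}_{L^2(\R^d)}=1$, and since $\mathcal{F}g_\lambda(\xi)=\lambda^{-d/2}\mathcal{F}g(\xi/\lambda)$ one gets $\mathcal{E}_a(g_\lambda,g_\lambda)=\lambda^a\,\mathcal{E}_a(g,g)$, so that $g_\lambda\in\mathcal{F}_a$ and $\mathcal{F}_a$ is dilation‑invariant. Likewise $(g_\lambda^2*g_\lambda^2)(x)=\lambda^d(g^2*g^2)(\lambda x)$, and since $\gamma$ is homogeneous of degree $-\alpha$ (Assumption \ref{A:Noise}), a change of variables yields $\langle g_\lambda^2*g_\lambda^2,\gamma\rangle_{L^2(\R^d)}=\lambda^\alpha\langle g^2*g^2,\gamma\rangle_{L^2(\R^d)}$. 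Writing $A:=\langle g^2*g^2,\gamma\rangle_{L^2(\R^d)}$ and $B:=\mathcal{E}_a(g,g)$ and using dilation‑invariance of $\mathcal{F}_a$ to insert an extra supremum over $\lambda$, one is led to $\mathbf{E}_{a,d}(\gamma,1)=\sup_{g}\sup_{\lambda>0}\bigl\{\lambda^\alpha A-\tfrac12\lambda^a B\bigr\}$ and $\mathcal{M}_{a,d}(\gamma,1)=\sup_{g}\sup_{\lambda>0}\bigl\{\lambda^{\alpha/2}A^{1/2}-\tfrac12\lambda^a B\bigr\}$.

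Next I would carry out the inner maximizations by elementary calculus. For fixed $g$ with $0<A<\infty$ and $0<B<\infty$, the function $\lambda\mapsto\lambda^\alpha A-\tfrac12\lambda^a B$ is maximized at $\lambda_\ast=(2\alpha A/(aB))^{1/(a-\alpha)}$, with maximum $\tfrac{a-\alpha}{a}(2\alpha/a)^{\alpha/(a-\alpha)}A^{a/(a-\alpha)}B^{-\alpha/(a-\alpha)}$; similarly $\lambda\mapsto\lambda^{\alpha/2}A^{1/2}-\tfrac12\lambda^a B$ is maximized at $\lambda_\ast=(\alpha A^{1/2}/(aB))^{2/(2a-\alpha)}$, with maximum $\tfrac{2a-\alpha}{2a}(\alpha/a)^{\alpha/(2a-\alpha)}A^{a/(2a-\alpha)}B^{-\alpha/(2a-\alpha)}$. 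On the degenerate set $\{A=0\}\cup\{B=\infty\}$ the inner suprema are non‑positive and so do not affect the outer supremum, which may therefore be restricted to the regime $0<A,B<\infty$. Taking $\sup_{g\in\mathcal{F}_a}$ and recognizing $\sup_g A\,B^{-\alpha/a}=\sigma(a,d,\alpha)$, one obtains \eqref{E:Erelation} and \eqref{E:Mrelation} at $\Theta=\theta=1$; the general statements then follow from \eqref{E:ScaleE} and \eqref{E:ScaleM}. Eliminating $\sigma(a,d,\alpha)$ between \eqref{E:Erelation} and \eqref{E:Mrelation} gives \eqref{E:EvsM}. Finiteness of the constants needs no separate argument: since $a/(2a-\alpha)>0$, finiteness of $\sigma(a,d,\alpha)$ is equivalent to $\mathcal{M}_a(\gamma)<\infty$, which is contained in Theorem \ref{T:Variation} (and in any case follows from the weak‑Young bound of Lemma \ref{L:rhofinite}).

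I expect the only genuinely delicate point to be the exact homogeneity identity $\langle g_\lambda^2*g_\lambda^2,\gamma\rangle_{L^2(\R^d)}=\lambda^\alpha\langle g^2*g^2,\gamma\rangle_{L^2(\R^d)}$, which is precisely where the homogeneous product structure of Assumption \ref{A:Noise} is used, together with the routine check that the inner one‑dimensional maximization may legitimately be performed before the outer supremum over $g$. Everything else is exponent arithmetic of the same form as in \cite{CHSX15AIHP}.
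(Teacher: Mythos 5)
Your proof is correct and arrives at the same formulas by essentially the same method as the paper: dilate a test function $g\mapsto g_\lambda(x)=\lambda^{d/2}g(\lambda x)$, exploit the exact homogeneity of $\langle g^2*g^2,\gamma\rangle_{L^2(\R^d)}$ (degree $\alpha$) and of $\mathcal{E}_a$ (degree $a$), and reduce each constrained supremum to an elementary one-dimensional optimization in $\lambda$, then invoke \eqref{E:ScaleE} and \eqref{E:ScaleM} for the general $(\Theta,\theta)$.

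The one genuine difference is organizational but worth pointing out. You define $\sigma(a,d,\alpha)$ directly as $\sup_{g\in\mathcal{F}_a}\langle g^2*g^2,\gamma\rangle_{L^2(\R^d)}\,\mathcal{E}_a(g,g)^{-\alpha/a}$, which is exactly the quantity left over after the inner maximization, so \eqref{E:Erelation}--\eqref{E:Mrelation} drop out immediately. The paper instead defines $\sigma$ as the sharp constant in the single-integral Sobolev bound \eqref{E:SobIneq}, proves in a separate lemma that the double-integral quantity obeys $\langle f^2*f^2,\gamma\rangle\le\sigma\,\mathcal{E}_a(f,f)^{\alpha/a}$, and in its lower-bound step selects $f\in\mathcal{F}_a$ with $\langle f^2*f^2,\gamma\rangle\ge(\sigma-\epsilon)\,\mathcal{E}_a(f,f)^{\alpha/a}$; this silently identifies the double-integral ratio supremum with the single-integral Sobolev constant, but the auxiliary lemmas establish only the inequality in one direction. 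Your formulation sidesteps that issue, since near-attainability is built into your definition, and still needs only the finiteness of the supremum (Theorem \ref{T:Variation} / Lemma \ref{L:rhofinite}), as you note. Both routes give \eqref{E:EvsM} by eliminating $\sigma$. One cosmetic remark worth recording: carrying out the elimination as you describe produces a middle factor $\left(\tfrac{2a-\alpha}{2a}\right)^{-(2a-\alpha)/(a-\alpha)}$, consistent with the exponent on $\mathcal{M}_{a,d}$; the exponent $-(2a-\alpha)/(a-2)$ printed in \eqref{E:EvsM} (and again in \eqref{E:EandMrelation1}) appears to be a typo for $-(2a-\alpha)/(a-\alpha)$.
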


We need to prove two lemmas in order to prove Lemma \ref{L:EvsM}.

\begin{lemma}
	For any $f \in L^2(\R^d)$ with $\mathcal{E}_a(f,f) < \infty$, it holds that
	\begin{equation}
		\label{E:SobIneq}
		\int_{\R^d} f^2(x)\gamma(x) \ud x \le C \Norm{f}_2^{2-(2\alpha/a)} \mathcal{E}_a(f,f)^{\alpha/a},
	\end{equation}
	where the constant $C$ only depends on $a, d$ and $\alpha$. Denote the best constant in
	\eqref{E:SobIneq} by $\sigma(a,d,\alpha)$.
\end{lemma}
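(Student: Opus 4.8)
The plan is to pass to the Fourier side and reduce \eqref{E:SobIneq} to two ingredients already essentially present in the excerpt: the convolution estimate \eqref{E:fracrhofinite} (established, via the weak Young inequality, inside the proof of Lemma~\ref{L:rhofinite}), and a Gagliardo--Nirenberg--type bound for $\Norm{\mathcal F f}$ in a subcritical Lebesgue space. Throughout I would take $f\in\mathscr{S}(\R^d)$, the general case following by density and Fatou's lemma; recall $\alpha<d$ by Assumption~\ref{A:Noise}, and note that the argument also uses $\alpha<a$, which is precisely the range in which the best constant $\sigma(a,d,\alpha)$ is finite. First, by Parseval's identity together with $\mathcal F\gamma=(2\pi)^d\varphi$ (equivalently $\mu=\varphi\,\ud\xi$) and $\mathcal F(f^2)=(2\pi)^{-d}\,\mathcal F f*\mathcal F f$,
\[
\int_{\R^d} f^2(x)\gamma(x)\,\ud x
= (2\pi)^{-d}\int_{\R^d}\int_{\R^d}\mathcal F f(\xi-\eta)\,\mathcal F f(\eta)\,\varphi(\xi)\,\ud\eta\,\ud\xi .
\]
Taking absolute values inside (recall $\varphi\ge 0$), changing variables $x=\xi-\eta$, $y=\eta$, and using that both $\varphi$ and $|\mathcal F f|$ are even, the right-hand side is bounded by a constant times $\int\int|\mathcal F f(x)|\,|\mathcal F f(y)|\,\varphi(x-y)\,\ud x\,\ud y$, which is the left-hand side of \eqref{E:fracrhofinite} with $F=G=|\mathcal F f|$. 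Hence
\[
\int_{\R^d} f^2(x)\gamma(x)\,\ud x \;\le\; C\,\Norm{\mathcal F f}_{L^{2d/(d+\alpha)}(\R^d)}^2 .
\]

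It then remains to prove the Gagliardo--Nirenberg estimate
\[
\Norm{\mathcal F f}_{L^{2d/(d+\alpha)}(\R^d)} \;\le\; C\,\Norm{f}_{L^2(\R^d)}^{1-\alpha/a}\;\mathcal E_a(f,f)^{\alpha/(2a)} .
\]
Set $m:=2d/(d+\alpha)\in(1,2)$ and, for $R>0$, split $\mathcal F f=\mathcal F f\,\mathbf{1}_{\{|\xi|\le R\}}+\mathcal F f\,\mathbf{1}_{\{|\xi|>R\}}$. On $\{|\xi|\le R\}$, H\"older's inequality (with exponents $2/m$ and its conjugate, using $\tfrac{1}{m}-\tfrac{1}{2}=\tfrac{\alpha}{2d}$) gives $\Norm{\mathcal F f\,\mathbf{1}_{\{|\xi|\le R\}}}_{L^m}\le |B_R|^{\alpha/(2d)}\Norm{\mathcal F f}_{L^2}\le C\,R^{\alpha/2}\Norm{f}_{L^2}$. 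On $\{|\xi|>R\}$, write $\mathcal F f(\xi)=|\xi|^{-a/2}\bigl(|\xi|^{a/2}\mathcal F f(\xi)\bigr)$ and apply H\"older with exponents $2d/\alpha$ and $2$; since $\alpha<a$, one has $|\xi|^{-a/2}\in L^{2d/\alpha}(\{|\xi|>R\})$ with norm $\le C\,R^{-(a-\alpha)/2}$, so $\Norm{\mathcal F f\,\mathbf{1}_{\{|\xi|>R\}}}_{L^m}\le C\,R^{-(a-\alpha)/2}\Norm{|\xi|^{a/2}\mathcal F f}_{L^2}\le C\,R^{-(a-\alpha)/2}\mathcal E_a(f,f)^{1/2}$. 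Adding the two bounds and optimizing over $R$ (take $R\asymp(\mathcal E_a(f,f)^{1/2}/\Norm{f}_{L^2})^{2/a}$) yields the claim, after recalling $\Norm{\mathcal F f}_{L^2}^2=(2\pi)^d\Norm{f}_{L^2}^2$ and $(2\pi)^{-d}\Norm{|\xi|^{a/2}\mathcal F f}_{L^2}^2=\mathcal E_a(f,f)$. Combining with the previous display proves \eqref{E:SobIneq}.

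The main obstacle is the second step. Because the target exponent $m=2d/(d+\alpha)$ is strictly less than $2$, one cannot feed the bound from the first step back to a norm of $f$ through the Hausdorff--Young inequality, so the estimate must be carried out directly in frequency; the low/high-frequency split together with the optimization in the cut-off $R$ is exactly the device that replaces it. The hypothesis $\alpha<a$ enters only there --- in the integrability of $|\xi|^{-a/2}$ near infinity --- and it coincides with the condition for $\sigma(a,d,\alpha)<\infty$. By contrast, the first step is only a repackaging of \eqref{E:fracrhofinite}, so no harmonic analysis beyond that already in Lemma~\ref{L:rhofinite} is required.
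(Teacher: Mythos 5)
Your proof is correct, but it follows a genuinely different route from the paper. The paper's argument (following \cite[Lemma A.3]{ChenAOP12}) is variational: it first asserts that
\[
\Lambda := \sup_{g\in\mathcal{F}_a}\left\{\int_{\R^d} g^2(x)\gamma(x)\,\ud x - \tfrac{1}{2}\,\mathcal{E}_a(g,g)\right\} < \infty
\]
(via the same machinery used for Theorem~\ref{T:Variation}), and then applies the dilation $g(x)=t^{d/2}f(tx)$ together with the homogeneities $\mathcal{E}_a(g,g)=t^a\mathcal{E}_a(f,f)$ and $\int g^2\gamma = t^\alpha\int f^2\gamma$ to conclude that $\Lambda\ge\sup_{x>0}\{C_f x^\alpha-\tfrac12 x^a\}$, from which boundedness of $C_f$ (and hence \eqref{E:SobIneq}) follows. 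You instead prove \eqref{E:SobIneq} directly on the Fourier side: Parseval turns $\int f^2\gamma$ into a bilinear form with kernel $\varphi$, the weak-Young estimate \eqref{E:fracrhofinite} reduces it to $\Norm{\mathcal{F}f}_{L^{2d/(d+\alpha)}}^2$, and a low/high frequency decomposition with optimization over the cutoff $R$ gives the Gagliardo--Nirenberg interpolation $\Norm{\mathcal{F}f}_{L^{2d/(d+\alpha)}}\lesssim\Norm{f}_2^{1-\alpha/a}\mathcal{E}_a(f,f)^{\alpha/(2a)}$. Your computations are accurate (the exponents $\alpha/(2d)$ on the low-frequency piece and $-(a-\alpha)/2$ on the high-frequency piece, the Hölder pairing $(2d/\alpha,2)$, and the optimal $R\asymp(\mathcal{E}_a(f,f)^{1/2}/\Norm{f}_2)^{2/a}$ all check out, using $\Norm{\mathcal{F}f}_{L^2}^2=(2\pi)^d\Norm{f}_2^2$ and $\mathcal{E}_a(f,f)=(2\pi)^{-d}\Norm{|\xi|^{a/2}\mathcal{F}f}_{L^2}^2$). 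The payoffs differ: the paper's route goes through the (only sketched) finiteness of $\Lambda$ and matches the template in the cited literature, directly exhibiting $\sigma(a,d,\alpha)$ as the sharp constant; your route is elementary and self-contained given Lemma~\ref{L:rhofinite}, produces an explicit (non-sharp) constant, and makes visible exactly where the hypothesis $\alpha<a$ enters (integrability of $|\xi|^{-a/2}$ in $L^{2d/\alpha}$ away from the origin) --- a condition the paper's proof also implicitly needs, since $\sup_{x>0}\{C_f x^\alpha - \tfrac12 x^a\}$ is finite only when $\alpha<a$, but does not flag.
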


\begin{proof}
	The proof of this result follows the scheme laid out in the proof of  \cite[Lemma A.3]{ChenAOP12}. By the same techniques presented in Lemma \ref{T:Variation}, one can show the following quantity is finite:
	\begin{align*}
		\Lambda & := \sup_{f\in\mathcal{F}_a}  \left\{  \int_{\R^d} g^2(x)\gamma(x) \ud x - \frac{1}{2} (2\pi)^{-d}\int_{\R^d} |x|^a|\mathcal{F} g(x)|^2 \ud x  \right\} \\
            & =    \sup_{f\in\mathcal{F}_a}  \left\{  \int_{\R^d} g^2(x)\gamma(x) \ud x - \frac{1}{2} \mathcal{E}_a(f,f)\right\} < \infty.
	\end{align*}
	Fix an arbitrary $f \in \mathcal{F}_a$. Clearly, $\Norm{f}_2 = 1$ and $\mathcal{E}_a(f,f) <
	\infty$. Let $C_f$ be the constant such that
	\[
		\int_{\R^d} f^2(x)\gamma(x)\ud x = C_f  \mathcal{E}_a(f,f)^{\alpha / a}.
	\]
	Now for $g(x):= t^{d/2} f(tx)$, it is easy to see that $\Norm{g}_2 = 1$ and
	\begin{align*}
		\mathcal{E}_a(g,g)                = t^a\mathcal{E}_a(f,f) \quad \text{and} \quad
		\int_{\R^d} g^2(x)\gamma(x) \ud x = t^\alpha \int_{\R^d} f^2(x)\gamma(x)\ud x.
	\end{align*}
	From this we can deduce that
	\[
	\int_{\R^d} g^2(x)\gamma(x)\ud x = C_f \mathcal{E}_a(g,g)^{\alpha/a}.
	\]
	Next we note that
	\begin{align*}
		\Lambda & \ge \int_{\R^d} g^2(x)\gamma(x)\ud x - \frac{1}{2}\mathcal{E}_a(g,g)               \\
						& = t^\alpha \int_{\R^d} f^2(x)\gamma(x) \ud x - \frac{1}{2}t^a \mathcal{E}_a(f,f)   \\
						& = C_f t^{\alpha} \mathcal{E}_a(f,f)^{\alpha/a} - \frac{1}{2}t^a \mathcal{E}_a(f,f) \\
						& = C_f \left(t\mathcal{E}_a(f,f)^{1/a}\right)^\alpha - \frac{1}{2} \left( t \mathcal{E}_a(f,f)^{1/a}\right)^a.
	\end{align*}
	Since $t>0$, then $ t\mathcal{E}_a(f,f)^{1/a}$ runs through all of $\R$ and thus we have that
	\[
	\Lambda \ge \sup_{x>0} \left\{ C_fx^\alpha - \frac{1}{2}x^a \right\} = \frac{a-\alpha}{a}C_f^{a/(a-2)}\left(\frac{2\alpha}{a}\right)^{\alpha/(a-\alpha)}.
	\]
	Note that this reduces to the equation present in the proof of Lemma A.3 \cite{ChenAOP12} when $a=2$. By taking the $\sup$ over all $f\in \mathcal{F}_a$ we see that
	\[
	\infty > \Lambda \ge \frac{a-\alpha}{a}\sigma(d,\alpha)^{a/(a-2)}\left(\frac{2\alpha}{a}\right)^{\alpha/(a-\alpha)}
	\]
	where
	\[
	\sup_{f\in \mathcal{F}_a} C_f = \sigma(d,\alpha)
	\]
	and finally we conclude that for any $f\in \mathcal{F}_a$
	\begin{equation}\label{E:SobIneqNorm1}
	\int_{\R^d} \frac{f^2(x)}{|x|^\alpha} \ud x \le \sigma(d,\alpha) \mathcal{E}_a(f,f)^{\alpha/a}.
	\end{equation}
	For arbitrary $f\in L^2(\R^2)$ with $\mathcal{E}_a(f,f) < \infty$ we apply the \eqref{E:SobIneqNorm1} to $f/\Norm{f}_2$ and see that
	\begin{align*}
	\int_{\R^d} \frac{f^2(x)}{|x|^\alpha} \ud x \le \sigma(d,\alpha) \Norm{f}_2^{2-(2\alpha/a)} \mathcal{E}_a(f,f)^{\alpha/a}
	\end{align*}
	which again reduces to the equation A.4 \cite{ChenAOP12} when $a=2$.
\end{proof}

	\begin{lemma}
For any $f \in \mathcal{F}_a$ we have
		\begin{equation}\label{E:sobineq1}
			\int_{\R^{2d}} \gamma(x-y)f^2(x)f^2(y) \ud x \ud y \le \sigma(d,\alpha) \mathcal{E}_a(f,f)^{\alpha/a}.
		\end{equation}
	\end{lemma}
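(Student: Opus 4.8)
The plan is to reduce the bilinear inequality \eqref{E:sobineq1} to the pointwise weighted inequality \eqref{E:SobIneq} of the preceding lemma (with its best constant $\sigma(a,d,\alpha)$) applied to a cleverly chosen auxiliary function. First I would change variables $u=x-y$, $v=y$ (legitimate since the integrand is nonnegative), to get
\begin{equation*}
	\int_{\R^{2d}} \gamma(x-y)\, f^2(x) f^2(y)\, \ud x\, \ud y = \int_{\R^d} \gamma(u)\, g^2(u)\, \ud u, \qquad g^2 := f^2 * \widetilde{f^2},
\end{equation*}
where $\widetilde{h}(x):=h(-x)$, so that $g^2(u)=\int_{\R^d} f^2(u+v)\, f^2(v)\, \ud v\ge 0$. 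Since $f\in\mathcal{F}_a$ we have $\Norm{f^2}_{L^1(\R^d)}=\Norm{f}_{L^2(\R^d)}^2=1$, so by Young's inequality $\Norm{g^2}_{L^1(\R^d)}=\Norm{f^2}_{L^1(\R^d)}^2=1$, i.e. $\Norm{g}_{L^2(\R^d)}=1$.

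The key step is the claim $\mathcal{E}_a(g,g)\le \mathcal{E}_a(f,f)$ (if the right-hand side is infinite the lemma is vacuous, so assume it is finite). The observation is that $g^2$ is an \emph{average of translates of} $f^2$: regarding $f^2(v)\,\ud v$ as a probability measure on $\R^d$, one has $g^2=\int_{\R^d}\tau_{-v}(f^2)\, f^2(v)\,\ud v$ with $\tau_c h:=h(\cdot-c)$, and for each $v$ the function $\sqrt{\tau_{-v}(f^2)}=|f(\cdot+v)|$ satisfies $\mathcal{E}_a\bigl(|f(\cdot+v)|,|f(\cdot+v)|\bigr)=\mathcal{E}_a(|f|,|f|)\le\mathcal{E}_a(f,f)$ by translation invariance and the pointwise contraction $\bigl||f(x)|-|f(y)|\bigr|\le|f(x)-f(y)|$. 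For $a\in(0,2)$ I would invoke the Gagliardo representation $\mathcal{E}_a(h,h)=c_{a,d}\iint_{\R^{2d}}\frac{|h(x)-h(y)|^2}{|x-y|^{d+a}}\,\ud x\,\ud y$ together with the convexity of $(s,t)\mapsto(\sqrt{s}-\sqrt{t})^2$ on $[0,\infty)^2$ (its Hessian is positive semidefinite); for $a=2$ I would instead use $\mathcal{E}_2(h,h)=\int_{\R^d}|\nabla h|^2$ and the joint convexity of $(s,p)\mapsto|p|^2/s$ on $(0,\infty)\times\R^d$ (convexity of the Fisher information). In either case $\rho\mapsto\mathcal{E}_a(\sqrt{\rho},\sqrt{\rho})$ is a convex functional of the density $\rho\ge 0$, so Jensen's inequality (with Tonelli to exchange the averaging with the double integral) applied to the average defining $g^2$ gives $\mathcal{E}_a(g,g)=\mathcal{E}_a(\sqrt{g^2},\sqrt{g^2})\le\int_{\R^d}\mathcal{E}_a\bigl(\sqrt{\tau_{-v}(f^2)},\sqrt{\tau_{-v}(f^2)}\bigr)\, f^2(v)\,\ud v\le\mathcal{E}_a(f,f)$.

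To finish: since $\Norm{g}_{L^2(\R^d)}=1$ and $\mathcal{E}_a(g,g)<\infty$, we have $g\in\mathcal{F}_a$, so applying the (normalized form of the) inequality \eqref{E:SobIneq} from the preceding lemma to $g$ yields
\begin{equation*}
	\int_{\R^{2d}} \gamma(x-y)\, f^2(x) f^2(y)\,\ud x\,\ud y = \int_{\R^d}\gamma(u)\, g^2(u)\,\ud u \le \sigma(a,d,\alpha)\,\mathcal{E}_a(g,g)^{\alpha/a} \le \sigma(a,d,\alpha)\,\mathcal{E}_a(f,f)^{\alpha/a},
\end{equation*}
which is \eqref{E:sobineq1}.

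The main obstacle is exactly the key step $\mathcal{E}_a(g,g)\le\mathcal{E}_a(f,f)$: it hinges on the ($a$-dependent) local/nonlocal Dirichlet-form representation of $\mathcal{E}_a$ and the convexity of the corresponding integrand, together with the measure-theoretic care needed to push Jensen's inequality through the double integral; the remaining steps are routine bookkeeping. (A shortcut avoiding the convexity argument would be to bound the left-hand side directly via the weak Young inequality by $C\Norm{f}_{L^{4d/(2d-\alpha)}(\R^d)}^4$ and then apply a fractional Gagliardo--Nirenberg inequality, but that route does not obviously reproduce the sharp constant $\sigma(a,d,\alpha)$, which is why I favor the route above.)
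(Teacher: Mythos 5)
Your proof is correct, but it follows a genuinely different route from the paper's. The paper's argument is considerably shorter and more elementary: it applies the preceding single-variable inequality \eqref{E:SobIneq} to the translated function $f(\cdot+y)$, noting that $\mathcal{E}_a$ is translation invariant, so that for every $y$
\begin{align*}
\int_{\R^d} f^2(x)\,\gamma(x-y)\,\ud x = \int_{\R^d} f^2(x+y)\,\gamma(x)\,\ud x \le \sigma(a,d,\alpha)\,\Norm{f}_{2}^{2-2\alpha/a}\,\mathcal{E}_a(f,f)^{\alpha/a},
\end{align*}
and then simply integrates against $f^2(y)\,\ud y$ and uses $\Norm{f}_2=1$. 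Your approach instead reduces the bilinear integral to the single-variable inequality applied to the auxiliary function $g=\sqrt{f^2 * \widetilde{f^2}}$, and the whole weight of the argument is carried by the nontrivial monotonicity claim $\mathcal{E}_a(g,g)\le\mathcal{E}_a(f,f)$, which you prove via the Gagliardo (respectively gradient) representation of $\mathcal{E}_a$, the convexity of $(s,t)\mapsto(\sqrt{s}-\sqrt{t})^2$ (respectively of Fisher information), Jensen's inequality, translation invariance, and the contraction $\mathcal{E}_a(|f|,|f|)\le\mathcal{E}_a(f,f)$. All of these steps are sound, and the constant $\sigma(a,d,\alpha)$ comes out sharp either way. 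What the paper's route buys is brevity and the avoidance of any Dirichlet-form convexity machinery; what your route buys is a genuinely interesting structural statement (convolution decreases $\mathcal{E}_a(\sqrt{\cdot},\sqrt{\cdot})$) that you might find useful elsewhere, at the cost of needing the $a$-dependent integral representations of $\mathcal{E}_a$ and a careful Tonelli/Jensen interchange. Given the simplicity of the translation-plus-Fubini argument, that is the one to prefer in this context, but your proof is not wrong.
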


	\begin{proof}
Suppose that $f \in L^2(\R^d)$ and suppose that $\mathcal{E}_a(f,f) < \infty$ and let $y\in \R^d$ be arbitrary. Recall the translation property of the Fourier transform
	\[
		|\mathcal{F}f(x)|  = |\mathcal{F}f(x-y)|.
	\]
Then by making a substitution and applying \eqref{E:SobIneq} we see that
	\[
		\int_{\R^d} \frac{f^2(x)}{|x-y|^\alpha} \ud x \le \sigma(d,\alpha) \Norm{f}_2^{2-(2\alpha/a)}\mathcal{E}_a(f,f)^{\alpha/a}
	\]
and in return,
	\[
		\sup_{y\in \R^d} \int_{\R^d} \frac{f^2(x)}{|x-y|^\alpha} \le \sigma(d,\alpha) 		\Norm{f}_2^{2-(2\alpha/a)}\mathcal{E}_a(f,f)^{\alpha/a}.
	\]
Next, notice that
	\begin{align*}
		\int_{\R^{2d}} \frac{f^2(x)f^2(y)}{|x-y|^\alpha} \ud x \ud y &= \int_{\R^d} \ud x f^2(x) \int_{\R^d} \ud y \frac{f^(y)}{|x-y|^\alpha}
		\\& \le \sigma(d,\alpha) \Norm{f}_2^{4-(2\alpha/a)}\mathcal{E}_a(f,f)^{\alpha/a}.
	\end{align*}
Now for two functions $f$ and $g$ and  by using the property that $\gamma(x) = C(\gamma) (K*K)(x)$, we see that
	\begin{align*}
		\int_{\R^{2d}} & \gamma(x-y)f^2(x)g^2(y) \ud x \ud y = C(\gamma) \int_{\R^d}\left[ \int_{\R^d}K(z-x)f^2(x) \ud x \right]\left[ \int_{\R^d}K(z-y)g^2(y) \ud y \right]\ud z
		\\&\le C(\gamma) \left\{\int_{\R^d}\left[ \int_{\R^d}K(z-x)f^2(x) \ud x \right]^2 \ud z \right\}^{1/2}\left\{\left[ \int_{\R^d}K(z-y)g^2(y) \ud y \right]^2\ud z\right\}^{1/2}
		\\&= \left\{ \int_{\R^{2d}} \gamma(x-y)f^2(x)f^2(y) \ud x \ud y \right\}^{1/2} \left\{ \int_{\R^{2d}} \gamma(x-y)f^2(x)f^2(y) \ud x \ud y \right\}^{1/2}
		\\&\le \sigma(d,\alpha) \Norm{f}_2^{2-(\alpha/a)}\mathcal{E}_a(f,f)^{\alpha/(2a)} \Norm{g}_2^{2-(1\alpha/a)}\mathcal{E}_a(g,g)^{\alpha/(2a)}
	\end{align*}
and when $f \in \mathcal{F}_a$ we see that this reduces to
	\[
		\int_{\R^{2d}} \gamma(x-y)f^2(x)f^2(y) \ud x \ud y \le \sigma(d,\alpha) \mathcal{E}_a(f,f)^{\alpha/a}.
	\]
We note that this reduces to equation (A.1) \cite{15AIHP} for the time independent case when $a=2$.
	\end{proof}

	\begin{proof}[Proof of Lemma \ref{L:EvsM}:]
We only prove the case of $\Theta=\theta=1$, the general case can be proven by applying the scaling properties \eqref{E:ScaleM} and \eqref{E:ScaleE}.

We have that
	\begin{align}
		E_{a,d}(\gamma,1) &\le \sup_{g\in \mathcal{F}_a} \left\{ \sigma(d,\alpha) \mathcal{E}_a(f,f)^{\alpha/a} - \frac{1}{2}\: (\mathcal{E}_a(g,g)^{1/a})^a \right\} \nonumber
		\\& \le \sup_{x>0} \left\{ \sigma(d,\alpha) x^\alpha - \frac{1}{2}x^a \right\} \nonumber
		\\& = \left(\frac{2\alpha}{a}\right)^{\alpha/(a-\alpha)}\frac{a-\alpha}{a}\sigma(d,\alpha)^{a/(a-\alpha)} \label{E:Eupperbound}
	\end{align}
and similarly
	\begin{align}
		\mathcal{M}_{a,d}(\gamma,1) & \le \sup_{x > 0} \left\{ \sigma(d,\alpha)^{1/2} x^{\alpha/2} - \frac{1}{2} x^{a} \right\} \nonumber
		\\&= \left( \frac{\alpha}{a}\right)^{\alpha/(2a-\alpha)} \frac{2a-\alpha}{2a}\sigma(d,\alpha)^{a/(2a-\alpha)}. \label{E:Mupperbound}
	\end{align}
Now choose $0 < \epsilon < \sigma(d,\alpha)$ and let $f\in \mathcal{F}_a$ be such that
	\[
		\int_{\R^{2d}} \gamma(x-y)f^2(x)f^2(y) \ud x \ud y \ge (\sigma(d,\alpha)-\epsilon) \mathcal{E}_a(f,f)^{\alpha/a}.
	\]
Now define
	\[
		g(x) = t^{d/2}f(tx).
	\]
Notice that
	\begin{align*}
		E_{a,d}(\gamma,1) &\ge \int_{\R^{2d}} g^2(x)g^2(y) \gamma(x-y)\ud x\ud y - \frac{1}{2}\: \mathcal{E}_a(g,g)
		\\&= t^\alpha \int_{\R^{2d}} \gamma(x-y) f^2(x)f^2(y) - 		\frac{1}{2} t^a \mathcal{E}_a(f,f)
		\\& \ge (\sigma(d,\alpha)-\epsilon) t^{\alpha} \mathcal{E}_a(f,f)^{\alpha/a} - \frac{1}{2} t^a \mathcal{E}_a(f,f)
	\end{align*}
and this is true for all $t>0$ so we can say that
	\begin{align*}
		E_{a,d}(\gamma,1) &\ge \sup_{x>0} \left\{ (\sigma(d,\alpha)-\epsilon) x^\alpha - \frac{1}{2} x^a \right\} \nonumber
\\& = \left(\frac{2\alpha}{a}\right)^{\alpha/(a-\alpha)}\frac{a-\alpha}{a}(\sigma(d,\alpha)-\epsilon)^{a/(a-\alpha)}.
	\end{align*}
and be letting $\epsilon \to 0$ gives us
	\begin{equation}\label{E:Elowerbound}
		E_{a,d}(\gamma,1) \ge\left(\frac{2\alpha}{a}\right)^{\alpha/(a-\alpha)}\frac{a-\alpha}{a}\sigma(d,\alpha)^{a/(a-\alpha)}.
	\end{equation}
and if we combine this with \eqref{E:Eupperbound} then we see that
	\begin{equation}\label{E:Eequality1}
		E_{a,d}(\gamma,1) = \left(\frac{2\alpha}{a}\right)^{\alpha/(a-\alpha)}\frac{a-\alpha}{a}\sigma(d,\alpha)^{a/(a-\alpha)}.
	\end{equation}
Similarly we show that
	\begin{equation}\label{E:Meqquality1}
		\mathcal{M}_{a,d}(\gamma,1) = \left( \frac{\alpha}{a}\right)^{\alpha/(2a-\alpha)} \frac{2a-\alpha}{2a}\sigma(d,\alpha)^{a/(2a-\alpha)}.
	\end{equation}
Lastly, by combining \eqref{E:Eequality1} and \eqref{E:Meqquality1}, we see that
	\begin{equation} \label{E:EandMrelation1}
		E_{a,d}(\gamma,1) = \left( \frac{a-\alpha}{a}\right)(2)^{\alpha/(a-\alpha)}\left( \frac{2a-\alpha}{2a} \right)^{-(2a-\alpha)/(a-2)}\mathcal{M}_{a,d}(\gamma,1)^{(2a-\alpha)/(a-2)} .
	\end{equation}
Equations \eqref{E:Meqquality1}, \eqref{E:Eequality1} and \eqref{E:EandMrelation1} recover equations (A.2), (A.3) and (A.4) \cite{15AIHP} respectively when $a=2$.
	\end{proof}
\section{Existence and uniqueness of the solution}
\label{S:solvable}

In this section, we will prove Theorem \ref{T:solvable}. The proof will need the following lemma:

\begin{lemma}[Lemma 3.5 of \cite{BCC21}]
	\label{L:doubleExp}
	If $H:[0,\infty) \to [0,\infty)$ is a non-decreasing function, then
	\begin{equation}
		\label{E:doubleExp}
		2\int_0^\infty e^{-2t}H^2(t) \ud t \le \left( \int_0^\infty e^{-t} H(t) \ud t \right)^2.
	\end{equation}
\end{lemma}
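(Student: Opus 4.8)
The plan is to reduce everything to a single pointwise consequence of monotonicity, via the ``tail'' function $u(t) := \int_t^\infty e^{-s} H(s)\,\ud s$. First I would dispose of the trivial case: if $I := \int_0^\infty e^{-t} H(t)\,\ud t = \infty$, the right-hand side of \eqref{E:doubleExp} is infinite and there is nothing to prove, so assume $I < \infty$. Since $H$ is non-decreasing it is Borel measurable and locally bounded, hence $s \mapsto e^{-s}H(s)$ is integrable on $[0,\infty)$; therefore $u$ is well defined, non-negative, absolutely continuous on $[0,\infty)$ with $u'(t) = -e^{-t}H(t)$ for a.e.\ $t$, and satisfies $u(0) = I$ together with $\lim_{t\to\infty} u(t) = 0$ (dominated convergence).

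The heart of the argument is the bound
\[
	e^{-t}H(t) \;\le\; u(t), \qquad t \ge 0,
\]
which is exactly where monotonicity enters: for $s \ge t$ one has $H(s) \ge H(t)$, so $u(t) = \int_t^\infty e^{-s}H(s)\,\ud s \ge H(t)\int_t^\infty e^{-s}\,\ud s = e^{-t}H(t)$. Equivalently $-u'(t) \le u(t)$ a.e., and since $-u'(t) = e^{-t}H(t) \ge 0$ we obtain, for a.e.\ $t$,
\[
	e^{-2t}H^2(t) = \bigl(u'(t)\bigr)^2 = \bigl(-u'(t)\bigr)^2 \;\le\; \bigl(-u'(t)\bigr)\,u(t) = -\tfrac12\,(u^2)'(t).
\]

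Integrating this pointwise inequality over $[0,\infty)$ and using that $u^2$ is locally absolutely continuous with $(u^2)' = 2uu'$ a.e.\ and $\lim_{t\to\infty}u(t)^2 = 0$, the fundamental theorem of calculus gives
\[
	2\int_0^\infty e^{-2t}H^2(t)\,\ud t \;\le\; -\int_0^\infty (u^2)'(t)\,\ud t \;=\; u(0)^2 \;=\; I^2,
\]
which is precisely \eqref{E:doubleExp}. The only step carrying real content is the monotonicity bound; the remaining ingredients — measurability of the monotone $H$, absolute continuity of $u$ and of $u^2$, and the decay $u(t)\to 0$ — are routine, and I do not anticipate any genuine obstacle. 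As a consistency check, $H\equiv\text{const}$ turns every inequality above into an equality, so the constant $2$ is optimal. (A slightly different route, avoiding absolute continuity altogether, is the layer-cake representation $H(t) = \int_0^\infty \mathbf{1}_{\{H(t) > \lambda\}}\,\ud\lambda$: with $\tau_\lambda := \inf\{t : H(t) > \lambda\}$, which is non-decreasing in $\lambda$, Tonelli's theorem turns \eqref{E:doubleExp} into the elementary pointwise inequality $2\max(\tau_\lambda,\tau_\mu)\ge \tau_\lambda+\tau_\mu$.)
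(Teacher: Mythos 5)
Your proof is correct, and it rests on the same key observation that drives the proof in the cited reference [BCC21, Lemma 3.5]: monotonicity of $H$ gives the pointwise bound $e^{-t}H(t)\le\int_t^\infty e^{-s}H(s)\,\ud s$. The standard route (as in [BCC21]) applies this bound inside the symmetric double-integral representation $I^2=2\int_0^\infty e^{-t}H(t)\bigl(\int_t^\infty e^{-s}H(s)\,\ud s\bigr)\ud t$; you instead package the tail integral as $u(t)$, rewrite the same inequality as $(-u')^2\le -\tfrac12(u^2)'$, and integrate. These are the same argument in differential versus integral dress, and all the technical points you raise (absolute continuity of $u$ and $u^2$, the limit $u(t)\to 0$, the dichotomy $I=\infty$) are handled correctly. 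The layer-cake alternative you sketch in the parenthetical is also valid and arguably the most elementary version of the proof.
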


The proof of Theorem \ref{T:solvable} follows the same strategy as \cite[Section 3]{BCC21} with minor changes such as
\begin{align}
	\label{E:Changes}
	\frac{1}{1+|\xi|^2} \qquad \text{replaced by} \qquad \frac{1}{1+\frac{\nu}{2}|\xi|^a}.
\end{align}
Nevertheless, for completeness, here we streamline and reorganize this proof as follows.

\begin{proof}[Proof of Theorem \ref{T:solvable}]
	We first introduce some notation. Let $L(x)$ be the Laplace transform of $G(\cdot,x)$ evaluated at
	one and calculate its Fourier transform $\mathcal{F}L(\xi)$ as follows:
	\begin{equation}
		\label{E:LandFL}
		L(x)              = \int_0^\infty e^{-t}G(t,x) \ud t \quad \text{and} \quad
		\mathcal{F}L(\xi) = \int_0^\infty e^{-t}\mathcal{F}G(t,\cdot)(\xi) \ud t
											= \dfrac{1}{1 + \frac{\nu}{2}|\xi|^a};
	\end{equation}
	see the proof of Theorem 4.1 of \cite{CHN19} for the last equality.  Similarly, let $L_n(\vec{y})$
	to be the Laplace transform of $\widetilde{f_n}(\vec{y},0,\cdot)$ evaluated at one, namely,
	\begin{align*}
		L_n(\vec{y}) & := n! \int_0^\infty e^{-t}\widetilde{f_n}(\vec{y};0,t) \ud t \\
								 & = \sum_{\sigma \in \Sigma_n} \int_0^\infty e^{-t} \int_{[0,t]^n<} \prod_{k=1}^n G(s_k - s_{k-1}, y_{\sigma(k)}-y_{\sigma(k-1)}) \ud \vec{s} \ud t
	\end{align*}
	with the convention that $s_0 = 0$ and $y_{\sigma(0)} = 0$.  By the relation of convolution and the
	Laplace transform (or through a change of variables), we see that
	\begin{equation}
			L_n(\vec{y}) = \sum_{\sigma \in \Sigma_n} L\left(y_{\sigma(1)}\right) L\left(y_{\sigma(2)}-y_{\sigma(1)}\right) \cdots L\left(y_{\sigma(n)}-y_{\sigma(n-1)}\right).
	\end{equation}
	Hence, from \eqref{E:LandFL},
	\begin{equation}
		\label{E:FLfn}
		\mathcal{F}L_n(\vec{\xi}) = \sum_{\sigma \in \Sigma_n} \prod_{k=1}^{n} \dfrac{1}{1+ \frac{\nu}{2}\left|\sum_{j=k}^n \xi_j \right|^a}.
	\end{equation}
	Moreover, define
	\begin{align*}
		H_n(t, \vec{x}) & = n! \int_{(\R^d)^n} \prod_{k=1}^n K(x_k - y_k) \widetilde{f_n}(\vec{y};0,t) \ud \vec{y} \\
										& = \sum_{\sigma \in \Sigma_n} \int_{[0,t]^n <} \int_{(\R^d)^n} \prod_{k=1}^n K(x_k-y_k) 	\prod_{k=1}^n G(s_k - s_{k-1}, y_{\sigma(k)} - y_{\sigma(k-1)}) \ud \vec{y} \ud \vec{s}.
	\end{align*}
	Under the nonnegativity assumption  --- Assumption \ref{A:Nonnegative}, we see that for any $\vec{x}
	\in\R^{nd}$ fixed, the function $t\to H_n\left(t,\vec{x} \right)$ is a non-decreasing function for
	$t\ge 0$. For this function, we are about to apply Lemma \ref{L:doubleExp}.

	{\bigskip\bf\noindent Step 1.~} We first compute the corresponding part to the right-hand side of
	\eqref{E:doubleExp}. By Fubini's theorem,
	\begin{align*}
			& \int_0^\infty e^{-t} H_n(t,\vec{x}) \ud t                                                                                                                                                                   \\
			& = \sum_{\sigma \in \Sigma_n} \int_0^\infty e^{-t} \int_{[0,t]^n <} \int_{(\R^d)^n} \prod_{k=1}^n K(x_k-y_k) 	\prod_{k=1}^n G(s_k - s_{k-1}, y_{\sigma(k)} - y_{\sigma(k-1)}) \ud \vec{y} \ud \vec{s} \ud t \\
			& = \int_{\R^{dn}} \prod_{k=1}^n K(x_k-y_k) L_n\left(\vec{y}\right) \ud \vec{y}.
	\end{align*}
	Then an application of the Plancherel's theorem and the fact that $K*K=\gamma$ shows that
	\begin{equation}
		\label{E:Hneq1}
		\int_{(\R^d)^n} \left[ \int_0^\infty e^{-t} H_n(t,\vec{x}) \ud t \right]^2 \ud \vec{x}
		= \int_{(\R^d)^n} | \mathcal{F}L_n(\vec{\xi}) |^2 \mu(\ud \vec{\xi});
	\end{equation}
	one may check the proof of Lemma 3.3 of \cite{BCC21} for more details.

	{\bigskip\bf\noindent Step 2.~} Now we compute the corresponding part to the left-hand side of
	\eqref{E:doubleExp}.  First, using the fact that $K*K=\gamma$, we see that
	\begin{equation}
		\label{E:inteq1}
		\Norm{\widetilde{f_n}(\cdot,0;t)}_{\mathcal{H}^{\otimes n}}^2 = \frac{1}{(n!)^2}\int_{(\R^d)^n} H_n^2(t,\vec{x}) \ud \vec{x};
	\end{equation}
	one may check the proof of Lemma 3.4 of \cite{BCC21} for more details.  By the scaling property
	for $\mathcal{F}\widetilde{f}_n$ in \eqref{E:Ffnscale}, one can show that
	\begin{align}
		\label{E:fHn}
		\int_0^\infty e^{-t} \Norm{ \widetilde{f}_n(\cdot,0,t) }_{\mathcal{H}^{\otimes n}}^2 \ud t
		 & = \frac{2^{n(2(b+r) - b\alpha/a)}}{(n!)^2}  \int_0^\infty 2e^{-2t} \int_{\R^{nd}} H_n(t,\vec{x})^2 \ud \vec{x} \ud t;
	\end{align}
	see Appendix for the proof.

	{\bigskip\bf\noindent Step 3.~} Now we can apply Fubini's theorem and Lemma \ref{L:doubleExp} to
	the function $t\to H_n(t,\vec{x})$ to see that
	\begin{align}
		\label{E:doubleH}
		\int_0^\infty 2e^{-2t} \int_{\R^{nd}} H_n(t,\vec{x})^2 \ud \vec{x} \ud t
		\le \int_{\R^{nd}} \left[ \int_0^\infty e^{-t} H_n(t,\vec{x}) \ud t\right ]^2 \ud \vec{x}.
	\end{align}
	Therefore, combining \eqref{E:Hneq1}, \eqref{E:inteq1}, and \eqref{E:doubleH} shows that
	\begin{align}
		\label{E:fTn}
		\begin{aligned}
			\int_0^\infty e^{-t} \Norm{ \widetilde{f}_n(\cdot,0,t) }_{\mathcal{H}^{\otimes n}}^2 \ud t
			& \le \frac{2^{n(2(b+r) - b\alpha/a)}}{(n!)^2} \int_{(\R^d)^n} | \mathcal{F}L_n(\vec{\xi}) |^2 \mu(\ud \vec{\xi}) \\
			& =: \frac{2^{n(2(b+r) - b\alpha/a)}}{(n!)^2} T_n(\nu,a),
		\end{aligned}
	\end{align}
	where
	\begin{equation}
		\label{E:Tn}
		T_n(\nu,a) = \int_{(\R^d)^n} \left[\sum_{\sigma \in \Sigma_n} \prod_{k=1}^n \frac{1}{1+\frac{\nu}{2}\left|\sum_{j=k}^n \xi_{\sigma(j)} \right|^a}\right]^2 \mu(\ud \vec{\xi}).
	\end{equation}
	By the same arguments as those of Lemma 3.6 of \cite{BCC21} with the replacement
	\eqref{E:Changes}, we see that
	\begin{equation}
		\label{E:muassumption}
		T_n(\nu, a) \le (n!)^2 C_\mu^n(\nu, a)
		\qquad \text{with} \quad
		C_\mu(\nu,a) := \int_{(\R^d)} \left(\frac{1}{1+\frac{\nu}{2}|\xi|^a}\right)^2 \mu(\ud \xi)
		< \infty.
	\end{equation}
	Here, the finiteness of $C_\mu(\nu,a)$ entails one part of the conditions in \eqref{E:alpha}:
	\begin{align}
		\label{E:alpha2}
		\int_{(\R^d)} \left(\frac{1}{1+|\xi|^a}\right)^2 \mu(\ud \xi)
		= C \int_0^\infty \frac{\rho^{\alpha-1}}{(1+\rho^{a})^2} \ud \rho<\infty
		\quad \Longleftrightarrow \quad
		2a - \alpha +1 > 1.
	\end{align}
	Combining \eqref{E:fTn} and \eqref{E:muassumption} gives that
	\begin{align}
		\label{E:LNfnineq}
		\int_0^\infty e^{-t} \Norm{\widetilde{f_n}(\cdot,0;t)}_{ \mathcal{H}^{ \otimes n } }^n \ud t
		\le 2^{n\left[2(b+r)-\frac{b\alpha}{a}\right]} C_\mu^n(\nu,a).
	\end{align}

	{\bigskip\bf\noindent Step 4.~}
	From the scaling property \eqref{E:normscale} we see that
	\begin{align}
		\nonumber
		\int_0^\infty e^{-t} \Norm{ \widetilde{f_n}\left(\cdot,0,t\right) }_{ \mathcal{H}^{\otimes n} }^2 \ud t
    & = \Norm{ \widetilde{f_n}(\cdot,0,1) }_{ \mathcal{H}^{\otimes n} }^2 \int_0^\infty e^{-t} t^{ \left[ 2(b+r)-\frac{b \alpha}{a} \right] n } \ud t \\
    & = \Norm{ \widetilde{f_n}(\cdot,0,1) }_{ \mathcal{H}^{\otimes n} }^2 \Gamma\left( \left[ 2(b+r)- \frac{b \alpha}{a} \right] n +1 \right),
		\label{E:fnOne}
	\end{align}
	which entails another part of the conditions in \eqref{E:alpha}:
	\begin{equation}
		\label{E:alpha3}
		2(b+r)-\frac{b\alpha}{a} >0.
	\end{equation}
	From \eqref{E:LNfnineq} and \eqref{E:fnOne}, we deduce that
	\begin{align*}
		\Norm{ \widetilde{f_n}(\cdot,0,1) }_{ \mathcal{H}^{\otimes n} }^2
    & = \dfrac{1}{ \Gamma\left( \left[ 2(b+r)- \frac{b \alpha}{a} \right] n +1 \right) }  \int_0^\infty e^{-t} \Norm{ \widetilde{f_n}(\cdot,0,t) }_{ \mathcal{H}^{\otimes n} }^2 \ud t \\
    & \le \dfrac{ \left(2^{ \left[ 2(b+r)- \frac{b \alpha}{a} \right]}C_\mu(\nu, a) \right)^n }{\Gamma\left( \left[ 2(b+r)- \frac{b \alpha}{a} \right] n +1 \right) }
		\le C \dfrac{ \left(2^{ \left[ 2(b+r)- \frac{b \alpha}{a} \right]}C_\mu(\nu,a) \right)^n }{ (n!)^{ 2(b+r) - b\alpha/a }},
	\end{align*}
	where the constant $C$ depends only on the value of $2(b+r)-b\alpha/a$.  Therefore, by
	\eqref{E:2mom}, \eqref{E:normscale}, and the above inequality,
	\begin{align}
		\nonumber
		\Norm{u(t,x)}_2^2 & = \sum_{n \ge 0} \theta^n n! \: t^{[2(b + r ) -b \alpha / a]n}  \Norm{ \widetilde{f_n}(\cdot,0,1) }_{ \mathcal{H}^{\otimes n} }^2 \\
                      & \le  C \sum_{n \ge 0} \theta^n  t^{[2(b + r ) -b \alpha / a]n} \dfrac{ \left(2^{ \left[ 2(b+r)- \frac{b \alpha}{a} \right]}C_\mu(\nu, a) \right)^n }{ (n!)^{ 2(b+r) - 1 - b\alpha/a }},
		\label{E:2nomBd}
	\end{align}
	which is finite provided that (see \eqref{E:alpha})
	\begin{equation}
		\label{E:alpha4}
		2(b+r) - 1 - b\alpha/a > 0.
	\end{equation}
	Finally, an application of the Minkowski inequality and the hypercontractivity (see \cite[Theorem
	B.1]{BCC21} or \cite{Le16} for the case of SHE) shows that for all $p\ge 2$,
	\begin{align}
		\nonumber
		\Norm{u(t,x)}_p & \le \sum_{n\ge 0} \theta^{n/2} (p-1)^{n/2} \sqrt{n!} \Norm{\widetilde{f_n}(\cdot,0,t) }_{ \mathcal{H}^{\otimes n} } \\
                    & \le  \sqrt{C} \sum_{n \ge 0} \theta^{n/2} (p-1)^{n/2}  t^{[2(b + r ) -b \alpha /a]n/2} \dfrac{ \left(2^{ \left[ 2(b+r)- \frac{b \alpha}{a} \right]}C_\mu(\nu, a) \right)^{n/2} }{ (n!)^{\frac{1}{2}\left[ 2(b+r) - 1 - b\alpha/a \right]}}.
		\label{E:pmomBd}
	\end{align}
	Therefore, under condition \eqref{E:alpha}, \eqref{E:SPDE} has an unique $L^p(\Omega)$-solution
	$u(t,x)$ for all $p\ge 2$,  $t>0$ and $x\in\R^d$.  This proves part (1) of Theorem
	\ref{T:solvable}.

	{\bigskip\bf\noindent Step 5.~} The proof of part (2) of Theorem \ref{T:solvable} will be
	postponed to part (ii) of Lemma \ref{L:2ineq} below.
\end{proof}

\section{Upper bound of the asymptotics}
\label{S:upperbd}

In this section, we will give the proof of part 2 of Theorem \ref{T:solvable} and establish the upper bound of \eqref{E:momentasym} (under Assumptions
\ref{A:Nonnegative} and \ref{A:Noise}, and condition \eqref{E:alpha}),  namely,
\begin{equation}
	\label{E:momentasym-upper}
	\begin{aligned}
		\limsup_{t_p \to \infty} t_p^{-\beta} \log \Norm{u(t,x)}_p \le & \left(\frac{1}{2}\right)\left(\frac{2a}{2a(b+r)- b\alpha} \right)^\beta \\
                                                                   & \times \left(\theta\nu^{-\alpha/a} \mathcal{M}_a^{\frac{2a-\alpha}{a}}\right)^{\frac{a}{2a(b+r)-b\alpha-a}}\left(2(b+r)-\frac{b\alpha}{a}-1\right).
	\end{aligned}
\end{equation}

As for the upper bound, we will first establish the corresponding result for $p=2$ in Lemma \ref{L:p=2} and then apply the
hypercontractivity property given by Theorem B.1 in \cite{BCC21} to obtain the general case for
$p\ge 2$. To prove the next lemma, we will need the following equality
	\begin{align}
		\label{E:Stirling}
		\lim_{n \to \infty} \frac{1}{n}\log\left( \frac{\Gamma(an+1)}{(n!)^a} \right) = a \log(a), \quad
		\text{for all $a>0$,}
\end{align}
which is a direct consequence of Stirling's formula.

\begin{lemma}
	\label{L:2ineq}
	Assume Assumptions \ref{A:Nonnegative} and \ref{A:Noise} hold. \\
	Under condition \eqref{E:alpha},
	\begin{align}
		\label{E:2ineq-a}
		&\lim_{n \to \infty}  \frac{1}{n} \log \left( \Norm{ \widetilde{f_n}(\cdot,0,1) }_{ \mathcal{H}^{\otimes n} }^2 \Gamma\left( \left[ 2(b+r)- \frac{b \alpha}{a} \right] n +1 \right) \right)
		= \log\left(2^{2(b+r)-\frac{b\alpha}{a}}\rho\right) \quad \text{and}\\
		\label{E:2ineq-b}
		& \lim_{n \to \infty}  \frac{1}{n} \log \left( (n!)^{2(b+r)- \frac{b \alpha}{a} }\Norm{ \widetilde{f_n}(\cdot,0,1) }_{ \mathcal{H}^{\otimes n} }^2  \right)
		=  \log\left( \left(\frac{2}{2(b+r)-\frac{b\alpha}{a}}\right)^{2(b+r)-\frac{b\alpha}{a}}\right) + \log\rho.
	\end{align}

\end{lemma}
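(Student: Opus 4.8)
Let me write $\kappa:=2(b+r)-\tfrac{b\alpha}{a}$ (so $\kappa>0$ by \eqref{E:alpha3}), put $\rho:=\rho_{\nu,a}(\gamma)$, and keep the notation $T_n(\nu,a)$ from \eqref{E:Tn} and $H_n(t,\vec x)$ from the proof of Theorem~\ref{T:solvable}. The plan is to establish \eqref{E:2ineq-a} and then read off \eqref{E:2ineq-b}. The reduction is immediate: since
\[
\Norm{\widetilde{f_n}(\cdot,0,1)}^2_{\mathcal{H}^{\otimes n}}\Gamma(\kappa n+1)=(n!)^{\kappa}\Norm{\widetilde{f_n}(\cdot,0,1)}^2_{\mathcal{H}^{\otimes n}}\cdot\frac{\Gamma(\kappa n+1)}{(n!)^{\kappa}}
\]
and $\tfrac1n\log\big(\Gamma(\kappa n+1)/(n!)^{\kappa}\big)\to\kappa\log\kappa$ by \eqref{E:Stirling}, the two limits in \eqref{E:2ineq-a} and \eqref{E:2ineq-b} differ by exactly $\kappa\log\kappa$; as $\log(2^{\kappa}\rho)=\log\big((2/\kappa)^{\kappa}\rho\big)+\kappa\log\kappa$, the two statements are equivalent and it suffices to prove \eqref{E:2ineq-a}.

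Next I would collect the facts already available. By the scaling property \eqref{E:normscale} (equivalently \eqref{E:fnOne}), $\Norm{\widetilde{f_n}(\cdot,0,1)}^2_{\mathcal{H}^{\otimes n}}\Gamma(\kappa n+1)=\int_0^\infty e^{-t}\Norm{\widetilde{f_n}(\cdot,0,t)}^2_{\mathcal{H}^{\otimes n}}\,\ud t$. Setting $\Psi_n:=(n!)^2\Norm{\widetilde{f_n}(\cdot,0,1)}^2_{\mathcal{H}^{\otimes n}}$, equations \eqref{E:inteq1} and \eqref{E:normscale} give the exact homogeneity $\int_{(\R^d)^n}H_n(t,\vec x)^2\,\ud\vec x=t^{\kappa n}\Psi_n$ for every $t>0$; moreover \eqref{E:Hneq1} together with \eqref{E:FLfn} and \eqref{E:Tn} gives $T_n(\nu,a)=\int_{(\R^d)^n}\big(\int_0^\infty e^{-t}H_n(t,\vec x)\,\ud t\big)^2\ud\vec x$; and Theorem~\ref{T:Variation} (i.e. \eqref{E:LogRho}) gives $\tfrac1n\log\big(T_n(\nu,a)/(n!)^2\big)\to\log\rho$. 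The upper bound in \eqref{E:2ineq-a} is then immediate from \eqref{E:fnOne} and \eqref{E:fTn} (the latter being exactly where the double-exponential inequality of Lemma~\ref{L:doubleExp} enters): $\Norm{\widetilde{f_n}(\cdot,0,1)}^2_{\mathcal{H}^{\otimes n}}\Gamma(\kappa n+1)\le 2^{\kappa n}T_n(\nu,a)/(n!)^2$, whence $\limsup_n\tfrac1n\log\big(\Norm{\widetilde{f_n}(\cdot,0,1)}^2_{\mathcal{H}^{\otimes n}}\Gamma(\kappa n+1)\big)\le\kappa\log2+\log\rho$.

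The hard part is the matching lower bound. A naive Cauchy--Schwarz in $t$ ($\int_0^\infty e^{-t}H_n(t,\vec x)\,\ud t\le(\int_0^\infty e^{-t}H_n(t,\vec x)^2\,\ud t)^{1/2}$, using $\int_0^\infty e^{-t}\,\ud t=1$) only produces $T_n(\nu,a)\le\Psi_n\Gamma(\kappa n+1)$, i.e. $\Norm{\widetilde{f_n}(\cdot,0,1)}^2_{\mathcal{H}^{\otimes n}}\Gamma(\kappa n+1)\ge T_n(\nu,a)/(n!)^2$, which yields only $\liminf\ge\log\rho$ and loses the whole factor $2^{\kappa}$. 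The device that recovers it is to run Cauchy--Schwarz in $t$ against the compensating weight $t^{-\kappa n/2}$, which is licensed precisely by the homogeneity $\int_{(\R^d)^n}H_n(t,\vec x)^2\,\ud\vec x=t^{\kappa n}\Psi_n$: for each $\vec x$,
\[
\Big(\int_0^\infty e^{-t}H_n(t,\vec x)\,\ud t\Big)^2\le\Big(\int_0^\infty e^{-t}t^{\kappa n/2}\,\ud t\Big)\Big(\int_0^\infty e^{-t}t^{-\kappa n/2}H_n(t,\vec x)^2\,\ud t\Big)=\Gamma\big(\tfrac{\kappa n}{2}+1\big)\int_0^\infty e^{-t}t^{-\kappa n/2}H_n(t,\vec x)^2\,\ud t .
\]
Integrating over $\vec x\in(\R^d)^n$, then using Tonelli and the homogeneity, $T_n(\nu,a)\le\Gamma\big(\tfrac{\kappa n}{2}+1\big)\int_0^\infty e^{-t}t^{-\kappa n/2}t^{\kappa n}\Psi_n\,\ud t=\Gamma\big(\tfrac{\kappa n}{2}+1\big)^2\Psi_n$, so that
\[
\Norm{\widetilde{f_n}(\cdot,0,1)}^2_{\mathcal{H}^{\otimes n}}\Gamma(\kappa n+1)=\frac{\Psi_n\Gamma(\kappa n+1)}{(n!)^2}\ge\frac{T_n(\nu,a)}{(n!)^2}\cdot\frac{\Gamma(\kappa n+1)}{\Gamma\big(\tfrac{\kappa n}{2}+1\big)^2}.
\]

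Finally, applying \eqref{E:Stirling} with $a=\kappa$ and with $a=\kappa/2$ gives $\tfrac1n\log\big(\Gamma(\kappa n+1)/\Gamma(\tfrac{\kappa n}{2}+1)^2\big)\to\kappa\log\kappa-2\cdot\tfrac{\kappa}{2}\log\tfrac{\kappa}{2}=\kappa\log 2$ (morally, the generalized central binomial coefficient is $\asymp 2^{\kappa n}/\sqrt{\kappa n}$), and combined with $\tfrac1n\log\big(T_n(\nu,a)/(n!)^2\big)\to\log\rho$ this yields $\liminf_n\tfrac1n\log\big(\Norm{\widetilde{f_n}(\cdot,0,1)}^2_{\mathcal{H}^{\otimes n}}\Gamma(\kappa n+1)\big)\ge\kappa\log2+\log\rho=\log(2^{\kappa}\rho)$. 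Together with the upper bound this proves \eqref{E:2ineq-a}, and then \eqref{E:2ineq-b} follows by the reduction in the first paragraph. The only genuinely non-routine point is the choice of the weight $t^{-\kappa n/2}$ in the lower-bound Cauchy--Schwarz; everything else is bookkeeping around \eqref{E:normscale}, \eqref{E:Hneq1}, \eqref{E:fTn}, Theorem~\ref{T:Variation} and Stirling's formula.
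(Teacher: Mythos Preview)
Your proof is correct. The upper bound and the reduction of \eqref{E:2ineq-b} to \eqref{E:2ineq-a} are identical to the paper's; the lower bound reaches exactly the same inequality $T_n(\nu,a)\le\Gamma(\kappa n/2+1)^2\Psi_n$ as the paper, but by a different Cauchy--Schwarz. The paper expands the square in $T_n(\nu,a)=\int_{(\R^d)^n}\big(\int_0^\infty e^{-t}H_n(t,\vec x)\,\ud t\big)^2\ud\vec x$ into a double time integral, introduces $J_n(t,t')=\int_{(\R^d)^n}H_n(t,\vec x)H_n(t',\vec x)\,\ud\vec x$, and applies Cauchy--Schwarz in $\vec x$ to get $J_n(t,t')\le J_n(t,t)^{1/2}J_n(t',t')^{1/2}=(tt')^{\kappa n/2}\Psi_n$; integrating against two independent mean-one exponentials then gives the same bound. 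Your weighted Cauchy--Schwarz in $t$ is a bit more direct (no auxiliary bilinear form, no second exponential), and the weight $t^{-\kappa n/2}$ is chosen precisely so that the homogeneity $\int H_n(t,\vec x)^2\,\ud\vec x=t^{\kappa n}\Psi_n$ kills the $t$-dependence after integrating in $\vec x$. The paper's route, on the other hand, makes the role of the two Laplace transforms (and hence of the factor $\Gamma(\kappa n/2+1)^2$ versus $\Gamma(\kappa n+1)$) a little more transparent. Either way the Stirling step $\tfrac1n\log\big(\Gamma(\kappa n+1)/\Gamma(\kappa n/2+1)^2\big)\to\kappa\log2$ finishes the argument.
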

\begin{proof}
	The proof follows the same arguments as those of \cite[Lemma 4.3]{BCC21}.
	Nevertheless, we sketch the proof here for completeness. Recall the definition of $T_n(\nu,a)$
	defined in \eqref{E:Tn}.  From \eqref{E:LogRho}, we see that
	\begin{align}
		\label{E:LimTn}
		\lim_{n \to \infty} \frac{1}{n} \log \left[ \frac{T_n(\nu,a)}{(n!)^2} \right]
		= \log\left(\rho_{\nu,a}\right).
	\end{align}
	As a consequence of \eqref{E:fTn} and \eqref{E:fnOne} in the proof of Theorem \ref{T:solvable}, we
	see that
	\begin{align*}
		\Norm{ \widetilde{f_n}(\cdot,0,1) }_{ \mathcal{H}^{\otimes n} }^2 \Gamma\left( \left[ 2(b+r)- \frac{b \alpha}{a} \right] n +1 \right) \le \dfrac{2^{n\left[2(b+r)-\frac{b\alpha}{a}\right]}}{(n!)^2} T_n(\nu,a).
	\end{align*}
	Combining this and \eqref{E:LogRho} we see that
	\begin{align*}
		\limsup_{n \to \infty} \frac{1}{n} & \log\left[ \Norm{ \widetilde{f_n}(\cdot,0,1) }_{ \mathcal{H}^{\otimes n} }^2 \Gamma\left( \left[ 2(b+r)- \frac{b \alpha}{a} \right] n +1 \right)  \right] \\
                                       & \le \log\left( 2^{\left[2(b+r)-\frac{b\alpha}{a}\right]} \right) + \lim_{n \to \infty} \frac{1}{n} \log\left( \frac{T_n(\nu,a)}{(n!)^2} \right)           \\
                                       & = \log\left(  2^{\left[2(b+r)-\frac{b\alpha}{a}\right]} \right)  + \log(\rho_{\nu,a}),
	\end{align*}
	which proves the upper bound for \eqref{E:2ineq-a}.  \bigskip

	Now we prove the lower bound for \eqref{E:2ineq-a}. Let $\tau$ and $\widetilde{\tau}$ be
	independent exponential random variables with mean one. In the following, we will compute
	$\mathbb{E}[J_n(\tau,\widetilde{\tau})]$ in two ways, where
	\begin{align*}
		J_n(t,t') := \int_{\R^{nd}} H_n(t,x)H_n(t',x) \ud x, \qquad t,t'>0.
	\end{align*}
	Notice that using the above notation, \eqref{E:inteq1} can be rewritten as
	\begin{align*}
		\Norm{\widetilde{f_n}(\cdot,0;t)}_{\mathcal{H}^{\otimes n}}^2 = \frac{1}{(n!)^2} \int_{(\R^{d})^n} H_n(t,\vec{x})^2\ud \vec{x} = \frac{1}{(n!)^2} J_n(t,t).
	\end{align*}
	On the one hand, the Cauchy-Schwartz inequality implies that
	\begin{align*}
		J_n(t,t') \le J_n(t,t)^{1/2}J_n(t',t')^{1/2} =  t^{[2(b + r ) -b \alpha / a]n/2} ( t' )^{[2(b + r ) -b \alpha / a]n/2} J_n(1,1).
	\end{align*}
	where we have used the scaling property of $J_n(t,t)$ inherited from that of
	$\Norm{\widetilde{f_n}(\cdot,0;t)}_{\mathcal{H}^{\otimes n}}$ as in \eqref{E:normscale}.
	Hence,
	\begin{align*}
		\mathbb{E}[J_n(\tau,\widetilde{\tau})] & \le \mathbb{E}\left[  {\tau}^{[2(b + r ) -b \alpha / a]n/2} \right] \mathbb{E}\left[  {\widetilde{\tau}}^{[2(b + r ) -b \alpha / a]n/2} \right]J_n(1,1) \\
                                           & = \Gamma\left( \frac{[2(b + r ) -b \alpha / a]n}{2} \right)^2 (n!)^2	\Norm{\widetilde{f_n}(\cdot,0;1) }_{ \mathcal{H}^{\otimes n} }^2.
	\end{align*}
	On the other hand, by \eqref{E:Hneq1} and \eqref{E:Tn}, we see that
	\begin{align*}
		\mathbb{E}[J_n(\tau,\widetilde{\tau})] & = \int_0^\infty \int_0^\infty e^{-t}e^{-\widetilde{t}}J_n(t,\widetilde{t}) \ud t \ud \widetilde{t}
                                             = \int_{\R^{dn}}\left[ \int_0^\infty e^{-t}H_n(t,x) \ud t \right]^2 \ud x
                                             = T_n\left(\nu,a\right).
	\end{align*}
	Therefore,
	\begin{align}
		\label{E:2lowbd}
		\frac{T_n(\nu,a)}{(n!)^2} \le \Gamma\left( \frac{[2(b + r ) -b \alpha / a]n}{2} \right)^2 	\Norm{ \widetilde{f_n}(\cdot,0;1) }_{ \mathcal{H}^{\otimes n} }^2 .
	\end{align}
	Now, an application of Stirling's formula shows that, as $n\to \infty$,
	\begin{align}
		\label{E:GammaSq}
		\Gamma\left( \frac{[2(b + r ) -b \alpha / a]n}{2} \right)^2  \sim \Gamma\left( [2(b + r ) -b \alpha / a]n +1 \right)2^{- [2(b + r ) -b \alpha / a]n}C_n,
	\end{align}
	where $C_n = 2^{-1}[2(b + r ) -b \alpha / a]^{1/2}(2 \pi n )^{1/2}$. Then an application of
	\eqref{E:LimTn}, \eqref{E:2lowbd} and \eqref{E:GammaSq} proves the lower bound of
	\eqref{E:2ineq-a}. Lastly, \eqref{E:2ineq-b} follows from \eqref{E:2ineq-a} and the limit
	\eqref{E:Stirling}. This proves Lemma \ref{L:2ineq}.
	\end{proof}

	Now are are ready to prove part (2) of Theorem \ref{T:solvable}.

	\begin{proof}[Proof of part (2) of Theorem \ref{T:solvable} ] The critical case happens when the exponent of $n!$ in \eqref{E:pmomBd}
	vanishes, namely,
	\begin{align*}
		\alpha = \frac{a}{b}\left[2(b+r)-1\right].
	\end{align*}
	Among the three inequalities in \eqref{E:alpha}, we also need to make sure that the minimum is
	achieved by $\frac{a}{b}\left[2(b+r)-1\right]$, for which, we need to additionally require
	$\frac{a}{b}\left[2(b+r)-1\right]\le d$ and
	\begin{align*}
		\frac{a}{b}\left[2(b+r)-1\right] \le 2a \quad \Longleftrightarrow \quad r\in[0,1/2].
	\end{align*}
	Putting these conditions together gives the conditions stated in \eqref{E:critical}.

	We start by proving part \textit{(2-i)}. Let $u_{\lambda}$ for $\lambda >0$ be the solution of
	the SPDE \eqref{E:SPDE} with $\theta$ replaced with $\lambda$ and $u=u_\theta$. By the
	hyperactivity property (see \cite[Lemma B.1]{BCC21}), we have that
	\begin{align}
		\label{E:HyContr}
		\Norm{u(t,x)}_p \le \Norm{u_{(p-1)\theta}(t,x)}_2, 	\quad \text{for all $p\ge 2$, $t>0$ and $x\in\R^d.$}
	\end{align}
	Now by recalling Theorem \ref{T:2momseries} and by applying $\alpha = \frac{a}{b}\left[ 2(b+r)
	- 1 \right]$ and the scaling property \eqref{E:normscale}, we see that
	\begin{align*}
	\Norm{u_{(p-1)\theta}(t,x)}_2^2 & =	\sum_{n \ge 0} \left[\theta(p-1)\right]^n n! \Norm{\tilde{f}_n(\cdot,0;t)}_{ \mathcal{H}^{\otimes n }}^2  \\
                                  & = \sum_{n \ge 0} \left[t\theta(p-1)\right]^n n! \Norm{\tilde{f}_n(\cdot,0;1)}_{ \mathcal{H}^{\otimes n }}^2 \\
                                  & =: \sum_{n \ge 0} \left[t\theta(p-1)\right]^n R_n,
	\end{align*}
	with $R_n = n! \Norm{\tilde{f}_n(\cdot,0;1)}_{ \mathcal{H}^{\otimes n }}^2$. By the
	Cauchy-Hadamard theorem, this series converges for $\theta t (p-1) < \limsup_{n \to \infty}
	|R_n|^{-1/n}$. However, by \eqref{E:2ineq-b} and Theorem \ref{T:Variation}, we see that
	\begin{align*}
		\lim_{n \to \infty} \frac{1}{n} \log(|R_n|) = \log\left( 2 \rho \right) =  \log\left( 2 \nu^{-\alpha/a} \mathcal{M}_a^{(2a-\alpha)/a} \right).
	\end{align*}
	Therefore, $\limsup_{n \to \infty} |R_n|^{-1/n} = (2 \nu^{-\alpha/a} \mathcal{M}_a^{(2a-\alpha)/a}
	)^{-1}$ and $\Norm{u(t,x)}_p$ converges for
	\begin{align*}
		t  < \dfrac{1}{2 \theta (p-1) \nu^{-\alpha/a} \mathcal{M}_a^{(2a-\alpha)/a} } =: T_p; \qquad \text{see \eqref{E:Tp}.}
	\end{align*}

	To show part \textit{(2-ii)}, we use the Cauchy-Hadamard theorem and the same techniques above to see that the radius of convergence of the series
	\begin{align*}
		\Norm{u(t,x)}_2^2 = \sum_{n \ge 0} \theta^n n! \Norm{\tilde{f}_n(\cdot,0;t)}_{ \mathcal{H}^{\otimes n }}^2,
	\end{align*}
	is precisely $T_2$. This completes the proof of Lemma \ref{L:2ineq}.
\end{proof}

\begin{lemma}
	\label{L:p=2}
	Assume Assumptions \ref{A:Nonnegative} and \ref{A:Noise} hold. Under condition \eqref{E:alpha}, we
	have that
	\begin{equation}
	\lim_{t \to \infty}  \frac{1}{t^\beta} \log \mathbb{E}(|u(t,x)|^2) \nonumber = \left( \frac{2a}{2a(b+r)- b\alpha} \right)^\beta (\theta\rho)^{\frac{a}{2a(b+r)-b\alpha-a}}\left(2(b+r)-\frac{b\alpha}{a}-1\right).
	\end{equation}
\end{lemma}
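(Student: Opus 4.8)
The plan is to extract the second moment asymptotics from the chaos expansion \eqref{E:2mom} together with the sharp estimate \eqref{E:2ineq-b} on the symmetrized kernels, which is the only new input beyond what was used for \eqref{E:momentasym} in \cite{BCC21}. First I would write, using the scaling property \eqref{E:normscale},
\begin{align*}
	\mathbb{E}(|u(t,x)|^2) = \sum_{n\ge 0} \theta^n n!\: t^{[2(b+r)-b\alpha/a]n} \Norm{\widetilde{f_n}(\cdot,0,1)}_{\mathcal{H}^{\otimes n}}^2 = \sum_{n\ge 0} a_n(t),
\end{align*}
and then invoke \eqref{E:2ineq-b}: setting $q := 2(b+r)-b\alpha/a$ (which is $>1$ precisely under \eqref{E:alpha}, i.e.\ \eqref{E:alpha4}), we have $n!\Norm{\widetilde{f_n}(\cdot,0,1)}_{\mathcal{H}^{\otimes n}}^2 = (n!)^{1-q}\exp\big(n\log\big((2/q)^q\rho\big)+o(n)\big)$. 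Hence $\log a_n(t) = n\log\big(\theta t^q (2/q)^q\rho\big) - (q-1)\log(n!) + o(n)$.

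Second, I would apply the standard Laplace-type asymptotics for a power series of this shape: if $\log a_n = cn - \kappa\log(n!) + o(n)$ with $\kappa>0$, then $\log\sum_n a_n \sim \kappa\, e^{c/\kappa}$ as $c\to\infty$ when $c$ depends on a diverging parameter. Concretely here $c = c(t) = \log\big(\theta t^q (2/q)^q\rho\big)$, which grows like $q\log t$, and $\kappa = q-1$. The maximizing index is $n^* \approx e^{c/\kappa}$, and a Stirling estimate gives
\begin{align*}
	\lim_{t\to\infty} \frac{1}{t^\beta}\log\mathbb{E}(|u(t,x)|^2) = (q-1)\left[\theta\left(\frac{2}{q}\right)^q \rho\right]^{1/(q-1)},
\end{align*}
where $\beta = q/(q-1)$, matching \eqref{E:beta-tp}. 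Expanding $q = 2(b+r)-b\alpha/a = 2a(b+r)-b\alpha)/a$ and $(2/q)^q = (2a/(2a(b+r)-b\alpha))^q$, and then raising to the $1/(q-1)$ power, gives $\big(\tfrac{2a}{2a(b+r)-b\alpha}\big)^{q/(q-1)} = \big(\tfrac{2a}{2a(b+r)-b\alpha}\big)^\beta$ times $(\theta\rho)^{1/(q-1)}$, and $1/(q-1) = a/(2a(b+r)-b\alpha-a)$; collecting these and the factor $q-1 = 2(b+r)-b\alpha/a-1$ yields exactly the claimed expression. I would make the Laplace asymptotics rigorous by the usual two-sided bound: for the lower bound keep the single term $n = n^*$ (or $n = \lfloor e^{c/\kappa}\rfloor$), and for the upper bound split the sum at, say, $N = \lceil (1+\varepsilon)e^{c/\kappa}\rceil$, bounding the tail by a geometric series since the ratio $a_{n+1}/a_n \to 0$, exactly as in the proof of \cite[Lemma~4.3 and its consequences]{BCC21}.

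The main obstacle is not conceptual but bookkeeping: one must track the $o(n)$ error terms in \eqref{E:2ineq-b} carefully enough to see they do not affect the leading $t^\beta$ asymptotics (they contribute only lower-order corrections to $c(t)$ and to the Stirling expansion of $\log(n!)$), and one must verify the algebraic identity that converts $(q-1)[\theta(2/q)^q\rho]^{1/(q-1)}$, with $\rho = \rho_{\nu,a}(\gamma) = \nu^{-\alpha/a}\mathcal{M}_a^{(2a-\alpha)/a}$ from Theorem~\ref{T:Variation}, into the stated closed form. Since the final statement of Lemma~\ref{L:p=2} records only $\rho$ rather than its variational value, this last substitution can be deferred, and indeed the passage to $\mathcal{M}_a$ only needs to be inserted once to reconcile with \eqref{E:momentasym-upper}; the general $p\ge 2$ upper bound \eqref{E:momentasym-upper} then follows from Lemma~\ref{L:p=2} by the hypercontractivity inequality \eqref{E:HyContr}, replacing $\theta$ by $(p-1)\theta$ and $t$ by $t$, and rewriting in terms of $t_p$ as in \eqref{E:beta-tp}.
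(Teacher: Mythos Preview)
Your proposal is correct and follows essentially the same route as the paper: write the second moment via \eqref{E:2mom} and the scaling \eqref{E:normscale}, feed in the sharp rate \eqref{E:2ineq-b}, and extract the asymptotics of $\sum_n (n!)^{-(q-1)}(\theta R t^q)^n$ with $q=2(b+r)-b\alpha/a$. The only cosmetic difference is that the paper invokes two black-box lemmas from \cite{BCC21} (Lemma~A.3 for $\lim_{t\to\infty}t^{-1/\gamma}\log\sum_n(n!)^{-\gamma}t^n=\gamma$, and Lemma~A.2 to absorb the multiplicative perturbation $R_n\sim R^n$) in place of your hands-on Laplace/Stirling argument.
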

\begin{proof}
	By part (1) of Theorem \ref{T:solvable}, there is an $L^2(\Omega)$ solution $u(t,x)$.  By the
	scaling property \eqref{E:normscale},
	\begin{align*}
		\mathbb{E}(|u(t,x)|^2) & = \sum_{n\ge 0} 	\theta^n(n!)\Norm{\widetilde{f}_n(\cdot,0,t)}_{\mathcal{H}^{\otimes n}}^2 = \sum_{n \ge 0}\theta^n(n!)t^{(2(b+r)-b\alpha/a)n}\Norm{\widetilde{f}_n(\cdot,0,1)}_{\mathcal{H}^{\otimes n}}^2 \\
                           & = \sum_{n \ge 0} z_n R_nt^{(2(b+r)-b\alpha/a)n}
	\end{align*}
	where
	\begin{align*}
		R_n = (n!)^{2(b+r)-b\alpha/a}\Norm{\widetilde{f}_n(\cdot,0,1)}_{\mathcal{H}^{\otimes n}}^2 \quad \text{and} \quad
		z_n = \frac{\theta^n}{(n!)^{2(b+r)-(b\alpha/a)-1}}.
	\end{align*}
	Notice that \eqref{E:2ineq-b} above says that
	\begin{align*}
		\frac{1}{n}\log(R_n) \to  \log\left( \left(\frac{2}{2(b+r)-\frac{b\alpha}{a}}\right)^{2(b+r)-\frac{b\alpha}{a}}  \rho \right )  \quad \text{as } n\to \infty.
	\end{align*}
	Now define $R$ to be
	\begin{align*}
		R=\left(\frac{2}{2(b+r)-\frac{b\alpha}{a}}\right)^{2(b+r)-\frac{b\alpha}{a}} \rho.
	\end{align*}
	We want to find a $\beta$ and $A$ so that
	\begin{align*}
		\lim_{t \to \infty}\frac{1}{t^\beta} \log \sum_{n \ge 0} z_n R_n \left(t^{(2(b+r)-b\alpha/a)}\right)^n = A.
	\end{align*}
	Indeed, by the following limit (see \cite[Lemma A.3]{BCC21}),
	\begin{align*}
		\lim_{t\to\infty} t^{-1/\gamma}\log\sum_{n\ge 0} (n!)^{-\gamma} t^n = \gamma, \qquad \text{for all $\gamma>0$,}
	\end{align*}
	we see that
	\begin{align*}
		\lim_{t\to \infty} \left[\frac{1}{(\theta
		R)t^{2(b+r)-b\alpha/a}}\right]^{\frac{1}{2(b+r)-(b\alpha/a)-1}} \log \sum_{n \ge 0} \frac{\left[ (\theta R) t^{2(b+r)-b\alpha/a}\right]^n}{(n!)^{2(b+r)-(b\alpha/a)-1}} = 2(b+r) - (b\alpha/a) -1,
	\end{align*}
	which, by an easy algebraic manipulation, is equivalent to
	\begin{align*}
		\lim_{t\to \infty} & \left[\frac{1}{t^{2(b+r)-b\alpha/a}}\right]^{\frac{1}{2(b+r)-(b\alpha/a)-1}} \log \sum_{n \ge 0} \frac{\left[ (\theta R) t^{2(b+r)-b\alpha/a}\right]^n}{(n!)^{2(b+r)-(b\alpha/a)-1}} \\
                       & = [2(b+r) - (b\alpha/a) -1](\theta R)^{\frac{1}{2(b+r)-(b\alpha/a)-1}}.
	\end{align*}
	Hence,
	\begin{align*}
		\beta = \frac{2a(b+r)-b\alpha}{2a(b+r)-(b\alpha)-a} \quad \text{and} \quad
		  A   = [2(b+r) - (b\alpha/a) -1](\theta R)^{\frac{a}{2a(b+r)-b\alpha-a}}.
	\end{align*}
	Finally, an application of \cite[Lemma A.2]{BCC21} shows that
	\begin{align*}
		\lim_{t\to \infty} & \frac{1}{t^\beta} \log \mathbb{E}(|u(t,x)|^2) \\
                       & = \theta^{\frac{a}{2a(b+r)-b\alpha-a}}\left( \frac{2a}{2a(b+r)- b\alpha} \right)^\beta \rho^{\frac{a}{2a(b+r)-b\alpha-a}}\left(2(b+r)-\frac{b\alpha}{a}-1\right),
	\end{align*}
	which proves Lemma \ref{L:p=2}.
\end{proof}

Now we are ready to prove \eqref{E:momentasym-upper}.

\begin{proof}[Proof of \eqref{E:momentasym-upper}]
	By the hypercontractivity \eqref{E:HyContr} and the scaling property \eqref{E:normscale}, we see
	that for all $p \ge 2$,
	\begin{align*}
		\Norm{u(t,0)}_p^2  \le
		\Norm{u_{(p-1)\theta}(t,0)}_2^2 &= \sum_{n \ge 0}(n!)^n\theta^n(p-1)^n \Norm{\widetilde{f}_n(\cdot,0,t)}_{\mathcal{H}^{\otimes n}}^2 \\
																		&= \sum_{n \ge 0}(n!)^n\theta^n \Norm{\widetilde{f}_n\left(\cdot,0,t(p-1)^{\frac{1}{2(b+r)-b\alpha/a}}\right)}_{\mathcal{H}^{\otimes n}}^2 \\
																		&= \Norm{u\left(t(p-1)^{\frac{1}{2(b+r)-b\alpha/a}}, 0\right)}_2^2.
	\end{align*}
	Hence,
	\begin{align}
		\label{E:pMom2}
		\Norm{u(t,0)}_p \le \Norm{u\left(t(p-1)^{\frac{1}{2(b+r)-b\alpha/a}}, 0\right)}_2 =: \Norm{u(t_p,0)}_2,
	\end{align}
	where $t_p$ is defined in \eqref{E:beta-tp}. Finally, an application of Lemma \ref{L:p=2} proves
	\eqref{E:momentasym-upper}.
\end{proof}

\section{Lower bound of the asymptotics}
\label{S:lowerbd}

%

In this section, we will prove the lower bound of \eqref{E:momentasym}, namely,
\begin{equation}
	\label{E:momentasym-lower}
	\begin{aligned}
		\liminf_{t_p \to \infty} t_p^{-\beta} \log \Norm{u(t,x)}_p \ge & \left(\frac{1}{2}\right)\left(\frac{2a}{2a(b+r)- b\alpha} \right)^\beta \\
																																	 & \times \left(\theta\nu^{-\alpha/a} \mathcal{M}_a^{\frac{2a-\alpha}{a}}\right)^{\frac{a}{2a(b+r)-b\alpha-a}}\left(2(b+r)-\frac{b\alpha}{a}-1\right).
	\end{aligned}
\end{equation}
Through out this section, we assume that Assumptions \ref{A:Nonnegative} and \ref{A:Noise}, and
condition \eqref{E:alpha} hold.

We start by defining the function $W_n(t,\phi)$ on $(0,\infty) \times L_{\mathbb{C}}^2(\mu)$ by
\begin{equation}
	W_n(t,\phi) := \int_{[0,t]^n<}
	\int_{(\R^d)^n} \prod_{k=1}^n \phi(\xi_k) \prod_{k=1}\mathcal{F}G(s_k-s_{k-1},\cdot)(\xi_k + \cdots + \xi_n) \mu(\ud \xi_1)\cdots \mu(\ud \xi_n) \ud s.
\end{equation}
with $s_0=0$. We now give conditions under which $W_n$ is well defined.

\begin{lemma}
	\label{L:LWn}
	If the measure $\mu$ satisfies the relation in \eqref{E:muassumption}, then $W_n(t,\phi)$ is well
	defined and for any $d \ge 1$, $t>0$ and $\phi \in L^2_{\mathbb{C}}(\mu)$. Moreover,
	\begin{equation}
		\label{E:LWn}
		\int_0^\infty e^{-t}W_n(t,\phi) \ud t = \int_{(\R^d)^n} \prod_{k=1}^n \phi(\xi_k) \prod_{k=1}^n \frac{1}{1+\frac{\nu}{2}|\xi_k + \cdots + \xi_n|^a} \mu(\ud \xi) \cdots \mu(\ud \xi_n).
	\end{equation}
\end{lemma}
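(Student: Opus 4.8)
The plan is to show first that $W_n(t,\phi)$ is finite by a Cauchy--Schwarz argument reducing it to $T_n(\nu,a)$, and then to compute its Laplace transform by a convolution identity analogous to the computation of $L_n(\vec y)$ in Section \ref{S:solvable}. For the well-definedness, I would first reduce to nonnegative $\phi$: since $|W_n(t,\phi)| \le W_n(t,|\phi|)$ by the triangle inequality under the integral, it suffices to bound $W_n(t,|\phi|)$. Then, bounding $\mathcal{F}G(s_k-s_{k-1},\cdot)(\cdot)$ is the delicate point --- note that $\mathcal F G(t,\cdot)(\xi)$ need not be bounded or integrable in $t$ near $0$ on its own, but the Laplace transform $\int_0^\infty e^{-t}\mathcal F G(t,\cdot)(\xi)\,\ud t = (1+\tfrac{\nu}{2}|\xi|^a)^{-1}$ is finite by \eqref{E:LandFL}. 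So I would work with the $e^{-t}$-weighted integral directly rather than trying to bound $W_n(t,\phi)$ pointwise in $t$, and instead obtain finiteness of $W_n(t,\phi)$ for a.e.\ $t$ (and hence the integrand of \eqref{E:LWn} is well-defined) as a byproduct of the finiteness of the right-hand side of \eqref{E:LWn}.

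For the Laplace transform identity \eqref{E:LWn}, the key step is the same convolution-in-time structure used for $L_n$: writing the time-simplex integral $\int_{[0,t]^n_<}\prod_k \mathcal F G(s_k-s_{k-1},\cdot)(\eta_k)\,\ud\vec s$ with $\eta_k := \xi_k+\cdots+\xi_n$ and $s_0=0$, this is an $n$-fold convolution (in $t$) of the functions $s\mapsto \mathcal F G(s,\cdot)(\eta_k)$, so by the convolution theorem for the Laplace transform,
\begin{align*}
	\int_0^\infty e^{-t} \int_{[0,t]^n_<} \prod_{k=1}^n \mathcal F G(s_k-s_{k-1},\cdot)(\eta_k)\,\ud \vec s \;\ud t
	= \prod_{k=1}^n \int_0^\infty e^{-s}\mathcal F G(s,\cdot)(\eta_k)\,\ud s
	= \prod_{k=1}^n \frac{1}{1+\frac{\nu}{2}|\eta_k|^a},
\end{align*}
using \eqref{E:LandFL} for the last equality. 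Applying Fubini's theorem to interchange the $\ud t$ integral with the $\mu(\ud\xi_1)\cdots\mu(\ud\xi_n)$ integrals --- justified because all integrands are nonnegative once we pass to $|\phi|$ --- then yields \eqref{E:LWn} for $|\phi|$, and the value is finite precisely because the right-hand side is dominated by $\Norm{\phi}_{L^\infty}^n T_n(\nu,a)^{1/2}$-type bounds; more carefully, by Cauchy--Schwarz in $\mu(\ud\vec\xi)$ one gets
\begin{align*}
	\int_{(\R^d)^n} \prod_{k=1}^n |\phi(\xi_k)| \prod_{k=1}^n \frac{1}{1+\frac{\nu}{2}|\eta_k|^a}\,\mu(\ud\vec\xi)
	\le \left( \int_{(\R^d)^n} \prod_{k=1}^n |\phi(\xi_k)|^2 \,\mu(\ud\vec\xi)\right)^{1/2}\left( \int_{(\R^d)^n} \prod_{k=1}^n \frac{1}{(1+\frac{\nu}{2}|\eta_k|^a)^2}\,\mu(\ud\vec\xi)\right)^{1/2},
\end{align*}
where the first factor is $\Norm{\phi}_{L^2(\mu)}^n<\infty$ and the second is bounded by a single permutation term in $T_n(\nu,a)$, hence finite by \eqref{E:muassumption}. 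This simultaneously establishes that the right-hand side of \eqref{E:LWn} is finite and, via the Fubini/Tonelli argument run backwards, that $W_n(t,\phi)$ is finite for a.e.\ $t>0$ and that $t\mapsto W_n(t,\phi)$ is integrable against $e^{-t}$, so \eqref{E:LWn} holds as stated for general complex $\phi$ by linearity.

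The main obstacle I expect is the bookkeeping in the convolution identity: the variables $\eta_k=\xi_k+\cdots+\xi_n$ are fixed once $\vec\xi$ is fixed, so for each fixed $\vec\xi$ the time-integral genuinely is a plain iterated convolution of scalar functions $s\mapsto \mathcal F G(s,\cdot)(\eta_k)$, and the subtlety is only to make sure that $\mathcal F G(s,\cdot)(\eta_k)$ is nonnegative (or at least that $e^{-s}\mathcal F G(s,\cdot)(\eta_k)$ is absolutely integrable) so the convolution theorem and Fubini apply without sign issues --- this is where one invokes that $G\ge 0$ under Assumption \ref{A:Nonnegative}, which gives $\mathcal F G(s,\cdot)(\eta)\le \mathcal F G(s,\cdot)(0)$ but more usefully makes the relevant integrals monotone and amenable to Tonelli. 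Everything else is a routine change of order of integration, and I would refer to the proof of \eqref{E:FLfn} in Section \ref{S:solvable} for the analogous computation to avoid repeating details.
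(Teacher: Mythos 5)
Your overall plan — identify the time‐simplex integral as an $n$‐fold convolution, use the Laplace transform $\int_0^\infty e^{-s}\mathcal{F}G(s,\cdot)(\eta)\,\ud s = (1+\tfrac{\nu}{2}|\eta|^a)^{-1}$ from \eqref{E:LandFL} termwise, and then control the $\mu^{\otimes n}$ integral by Cauchy--Schwarz against (one term of) $T_n(\nu,a)$ — is the right structure and is essentially what the paper has in mind (it delegates to Lemma 6.2 of \cite{BCC21} with exactly this replacement). However, there is a genuine gap in the reduction to nonnegative integrands on which your Fubini/Tonelli step rests.

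You claim $|W_n(t,\phi)| \le W_n(t,|\phi|)$ ``by the triangle inequality under the integral,'' and later that ``all integrands are nonnegative once we pass to $|\phi|$.'' Both assertions implicitly require $\mathcal{F}G(s,\cdot)(\eta)\ge 0$, which does \emph{not} follow from Assumption~\ref{A:Nonnegative}: nonnegativity of $G$ only gives $|\mathcal{F}G(s,\cdot)(\eta)|\le \mathcal{F}G(s,\cdot)(0)$, not a sign. For $b\in(0,1]$ the Fourier transform is indeed nonnegative (complete monotonicity of the Mittag--Leffler function), but for $b\in(1,2)$ — and in the wave case $b=2$ to which this paper is designed to degenerate, where $\mathcal{F}G(s,\cdot)(\xi)\propto \sin(cs|\xi|)/|\xi|$ — it oscillates in sign. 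Then neither $|W_n(t,\phi)|\le W_n(t,|\phi|)$ nor ``the integrand is nonnegative'' holds, so Tonelli does not apply, and the repair you gesture at (``or at least that $e^{-s}\mathcal{F}G(s,\cdot)(\eta_k)$ is absolutely integrable'') is weaker than what you need. Replacing $\mathcal{F}G$ by $|\mathcal{F}G|$ gives $\int_0^\infty e^{-s}|\mathcal{F}G(s,\cdot)(\eta)|\,\ud s \le 1$, but this absolutized Laplace transform is in general \emph{not} $(1+\tfrac{\nu}{2}|\eta|^a)^{-1}$; a scaling computation using \eqref{E:greenscale} shows it decays only like $|\eta|^{-\frac{a}{b}(b+r-1)}$, which for $r<1$ is strictly slower than $|\eta|^{-a}$. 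Plugging this into your Cauchy--Schwarz bound then fails for $\alpha$ in part of the admissible range \eqref{E:alpha}, so the dominating integral you need for Fubini diverges — your argument does not actually establish the absolute integrability it invokes.

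A second, smaller issue: the lemma asserts well-definedness of $W_n(t,\phi)$ for \emph{every} $t>0$, while your route only yields it for a.e.\ $t$ as a byproduct of the Laplace transform identity. The cleaner way to get the pointwise-in-$t$ statement (and to sidestep the sign issue entirely) is to recognize $W_n(t,\phi)$, after first performing the $\ud\vec s$ integration, as the pairing $\bigl\langle \overline{\mathcal{F}f_n(\cdot,0,t)},\,\phi^{\otimes n}\bigr\rangle_{L^2(\mu^{\otimes n})}$ up to a reindexing of the $\xi_j$; well-definedness for each fixed $t$ then follows from $f_n(\cdot,0,t)\in\mathcal{H}^{\otimes n}$ (which is where \eqref{E:muassumption} actually enters) and $\phi\in L^2(\mu)$ via Cauchy--Schwarz, \emph{without} any absolute integrability of the iterated integral with $|\mathcal{F}G|$. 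The convolution identity for the $\ud t$-transform can then be verified for each fixed $\vec\xi$ (where the one-dimensional Laplace transforms do converge absolutely thanks to $|\mathcal{F}G(s,\cdot)(\eta)|\le \mathcal{F}G(s,\cdot)(0)$), and the remaining interchange can be justified by approximating $\phi$ by functions for which the full triple integral converges absolutely, then passing to the limit using the uniform $L^2$-bound. Your proposal does not close this loop, so as written it has a gap precisely in the regime ($b>1$) that motivates the paper.
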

\begin{proof}
	The proof follows the exact arguments as those in the proof of Lemma 6.2 of \cite{BCC21} except
	that one needs to use the following Laplace transform:
	\begin{align*}
		\int_0^\infty e^{-t} \mathcal{F}G(t,\cdot)(\xi) \; \ud t =\frac{1}{1+\frac{\nu}{2}|\xi|^a};
	\end{align*}
	see the second equation in \eqref{E:LandFL}.
\end{proof}

The next proposition is a restatement of Proposition 6.3 of \cite{BCC21}. The proof follows the same
proof as that of Proposition 6.3 of \cite{BCC21}. We will not repeat here.
\begin{proposition}
	\label{P:Wnprop1}
	For $f\in \mathcal{H}$, $t>0$, and $p,q>1$ with $p^{-1} + q^{-1}=1$, it holds that
	\begin{align}
		\label{E:Wnprop1}
		\Norm{u(t,0)}_p \ge \exp\left\{ -\frac{1}{2}(q-1)\Norm{f}_{\mathcal{H}}^2 \right\} \left| \sum_{n \ge 0} \theta^{n/2} W_n(t,\mathcal{F}f) \right|
	\end{align}
	and as a consequence, the series $\left| \sum_{n \ge 0} \theta^{n/2} W_n(t,\mathcal{F}f) \right|$
	converges provided that $\Norm{u(t,0)}_p < \infty$, which is the case under Theorem
	\ref{T:solvable}.
\end{proposition}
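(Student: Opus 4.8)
The plan is to adapt the argument of Proposition~6.3 of \cite{BCC21} to the present setting, the only genuine change being the Laplace transform identity from Lemma~\ref{L:LWn}. The starting point is the Wiener chaos expansion \eqref{E:solwexp} of $u(t,0)$ together with the standard duality between the Skorohod integral and the exponential (Wick) functional of a Gaussian element. Concretely, for $f \in \mathcal{H}$ one introduces the exponential random variable
\[
	\mathcal{E}(f) := \exp\left\{ W(f) - \tfrac{1}{2}\Norm{f}_{\mathcal{H}}^2 \right\},
\]
whose chaos expansion has $n$-th kernel $\tfrac{1}{n!}f^{\otimes n}$, and one uses the pairing $\E\big[ I_n(g_n)\, I_n(f^{\otimes n}/n!)\big] = \langle g_n, f^{\otimes n}\rangle_{\mathcal{H}^{\otimes n}}$ to compute $\E[u(t,0)\,\mathcal{E}(f)]$. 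Pairing term by term in \eqref{E:solwexp} produces $\sum_{n\ge 0}\theta^{n/2}\langle \widetilde{f_n}(\cdot,0,t), f^{\otimes n}\rangle_{\mathcal{H}^{\otimes n}}$; expressing the inner product on the Fourier side via \eqref{E:fweker} and the definition of $\langle\cdot,\cdot\rangle_{\mathcal{H}}$ identifies this sum with $\sum_{n\ge 0}\theta^{n/2} W_n(t,\mathcal{F}f)$, which is exactly where the explicit form of $\mathcal{F}G$ and the symmetrization enter; this is the bookkeeping step that must be checked against Lemma~\ref{L:LWn} and the scaling Lemma~\ref{L:Scale}.

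With the identity $\E[u(t,0)\,\mathcal{E}(f)] = \sum_{n\ge 0}\theta^{n/2} W_n(t,\mathcal{F}f)$ in hand, the lower bound follows from H\"older's inequality in the form $\big|\E[u(t,0)\,\mathcal{E}(f)]\big| \le \Norm{u(t,0)}_p \,\Norm{\mathcal{E}(f)}_q$ for conjugate exponents $p,q$, together with the explicit moment formula $\Norm{\mathcal{E}(f)}_q = \exp\{\tfrac{1}{2}(q-1)\Norm{f}_{\mathcal{H}}^2\}$ for a Gaussian exponential (a direct computation using that $W(f)$ is centered Gaussian with variance $\Norm{f}_{\mathcal{H}}^2$). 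Rearranging gives
\[
	\Norm{u(t,0)}_p \ge \exp\left\{ -\tfrac{1}{2}(q-1)\Norm{f}_{\mathcal{H}}^2 \right\}\left| \sum_{n\ge 0}\theta^{n/2} W_n(t,\mathcal{F}f) \right|,
\]
which is \eqref{E:Wnprop1}. Since under Theorem~\ref{T:solvable} we have $\Norm{u(t,0)}_p < \infty$ and $\Norm{f}_{\mathcal{H}}^2 < \infty$ for $f \in \mathcal{H}$, the displayed inequality forces the series $\big|\sum_{n\ge 0}\theta^{n/2} W_n(t,\mathcal{F}f)\big|$ to be finite, giving the stated consequence.

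The one point that requires care — and which I expect to be the main obstacle — is justifying the term-by-term pairing and the interchange of the infinite sum with the expectation: one needs the chaos expansion of $u(t,0)$ to converge in $L^2(\Omega)$ (guaranteed by Theorem~\ref{T:2momseries} under condition \eqref{E:alpha}), one needs $\mathcal{E}(f) \in L^q(\Omega)$ for the relevant $q$ (true for all $q$ since $f \in \mathcal{H}$), and then the pairing is legitimate because $u(t,0) \in L^p$, $\mathcal{E}(f)\in L^q$. For the identification of $\langle \widetilde{f_n}(\cdot,0,t), f^{\otimes n}\rangle_{\mathcal{H}^{\otimes n}}$ with $W_n(t,\mathcal{F}f)$ one uses that $\widetilde{f_n}$ is the symmetrization of $f_n$, that pairing a symmetric kernel with the symmetric tensor $f^{\otimes n}$ equals pairing the unsymmetrized $f_n$ with $f^{\otimes n}$, and then \eqref{E:fweker} together with Fubini (whose applicability is exactly the content of Lemma~\ref{L:LWn}) rewrites the result as the iterated time–frequency integral defining $W_n(t,\mathcal{F}f)$. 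All of these are routine once the structure is set up, so beyond flagging these measurability/integrability checks the proof reduces to the computation already carried out in \cite[Proposition~6.3]{BCC21}, and I would simply cite it after noting the substitution of the Laplace transform $\int_0^\infty e^{-t}\mathcal{F}G(t,\cdot)(\xi)\,\ud t = (1+\tfrac{\nu}{2}|\xi|^a)^{-1}$ from \eqref{E:LandFL}.
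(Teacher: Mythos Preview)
Your proposal is correct and follows exactly the approach the paper intends: the paper's own proof simply states that the argument is identical to that of Proposition~6.3 of \cite{BCC21} and gives no further details, and what you have sketched is precisely that argument (chaos expansion of $u$, pairing with the exponential functional $\mathcal{E}(f)$, H\"older, and the Gaussian moment formula for $\Norm{\mathcal{E}(f)}_q$). One small cleanup: the Laplace transform identity from \eqref{E:LandFL} and the scaling Lemma~\ref{L:Scale} are not actually needed for this proposition --- the identification $\langle \widetilde{f_n}(\cdot,0,t), f^{\otimes n}\rangle_{\mathcal{H}^{\otimes n}} = W_n(t,\mathcal{F}f)$ comes directly from \eqref{E:fweker} and the definition of $W_n$, while the well-definedness of $W_n$ (hence Fubini) is the first assertion of Lemma~\ref{L:LWn}.
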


Now we are going to apply a scaling argument to \eqref{E:Wnprop1} in order to put $t$ and $p$
together, from which we can determine the constants $t_p$ and  $\beta$ defined in
\eqref{E:beta-tp}.

\begin{proposition}
	\label{P:Wnprop2}
	For $p,q > 1$, $p^{-1}+q^{-1}=1$ and for any $f \in \mathcal{H}$,
	\begin{align}
		\label{E:Wnprop2}
		\Norm{u(t,0)}_p \ge \exp\left\{ -\frac{1}{2} t_p^\beta \Norm{f}_{\mathcal{H}}^2 \right\}\left| \sum_{n \ge 0} \theta^{n/2} W_n\left(t_p^\beta, \mathcal{F}f\right) \right|,
	\end{align}
	where the constants $\beta$ and $t_p$ are defined in \eqref{E:beta-tp}.
\end{proposition}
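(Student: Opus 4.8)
The plan is to obtain Proposition~\ref{P:Wnprop2} from Proposition~\ref{P:Wnprop1} by a dilation argument in the spectral variables, with the dilation parameter chosen so that the factor $q-1 = 1/(p-1)$ gets converted into $t_p^{\beta}$. The first step is to record how $W_n$ transforms under the dilations $(S_\lambda\phi)(\xi) := \phi(\lambda\xi)$, $\lambda>0$. Substituting $\xi_k = \lambda^{-1}\eta_k$ for each $k$ in the definition of $W_n$, using that the spectral density $\varphi$ of \eqref{E:specfunction} is homogeneous of degree $-(d-\alpha)$ (so that $\mu(\ud\xi_k) = \lambda^{-\alpha}\mu(\ud\eta_k)$), then applying the first identity of \eqref{E:greenscale} to each factor $\mathcal{F}G(s_k-s_{k-1},\cdot)\bigl(\lambda^{-1}(\eta_k+\cdots+\eta_n)\bigr)$, and finally substituting $s_k = \lambda^{a/b}\sigma_k$ in the time integral, one arrives at
\begin{equation}
	W_n(t, S_\lambda\phi) = \lambda^{\,n(a(b+r)/b-\alpha)}\, W_n\!\left(\lambda^{-a/b}t, \phi\right), \qquad n\ge 1,\ t>0.
	\label{E:WnScale}
\end{equation}
These changes of variables are legitimate because, under condition \eqref{E:alpha}, the measure $\mu$ satisfies \eqref{E:muassumption}, and so, by Lemma~\ref{L:LWn}, each $W_n(t,\cdot)$ is an absolutely convergent iterated integral on $L^2_{\mathbb{C}}(\mu)$.

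It is then convenient to absorb the prefactor of \eqref{E:WnScale} into the dilation: setting $T_\lambda\phi := \lambda^{\alpha - a(b+r)/b}\,S_\lambda\phi$ turns \eqref{E:WnScale} into $W_n(t, T_\lambda\phi) = W_n(\lambda^{-a/b}t, \phi)$ for every $n$, while a short computation using the homogeneity of $\varphi$ shows that $\Norm{g}_{\mathcal{H}}^2 = \lambda^{-a\kappa/b}\Norm{f}_{\mathcal{H}}^2$ whenever $\mathcal{F}g = T_\lambda\mathcal{F}f$, where $\kappa := 2(b+r) - b\alpha/a$. Given $f\in\mathcal{H}$, I would then choose $\lambda := (t/t_p^{\beta})^{b/a}$, so that $\lambda^{-a/b}t = t_p^{\beta}$, and let $g\in\mathcal{H}$ be defined by $\mathcal{F}g = T_\lambda\mathcal{F}f$ (it lies in $\mathcal{H}$ because $\Norm{g}_{\mathcal{H}}^2 = \lambda^{-a\kappa/b}\Norm{f}_{\mathcal{H}}^2 < \infty$). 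Applying Proposition~\ref{P:Wnprop1} to $g$ with the same $t$, and using $W_n(t, \mathcal{F}g) = W_n(t_p^{\beta}, \mathcal{F}f)$ for every $n$, then yields
\[
	\Norm{u(t,0)}_p \ge \exp\!\left\{-\frac{1}{2}(q-1)\Norm{g}_{\mathcal{H}}^2\right\}\left|\sum_{n\ge 0}\theta^{n/2}W_n\!\left(t_p^{\beta}, \mathcal{F}f\right)\right|.
\]

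It remains only to identify the exponent, that is, to verify $(q-1)\Norm{g}_{\mathcal{H}}^2 = t_p^{\beta}\Norm{f}_{\mathcal{H}}^2$; this completes the proof. Since $q-1 = 1/(p-1)$ and $\Norm{g}_{\mathcal{H}}^2 = \lambda^{-a\kappa/b}\Norm{f}_{\mathcal{H}}^2 = (t_p^{\beta}/t)^{\kappa}\Norm{f}_{\mathcal{H}}^2$, the desired identity is equivalent to $(t_p^{\beta})^{\kappa-1} = (p-1)\,t^{\kappa}$; inserting $t_p = (p-1)^{1-1/\beta}t$ reduces this to the two relations $(\beta-1)(\kappa-1) = 1$ and $\beta(\kappa-1) = \kappa$, both of which are immediate from $\beta = \kappa/(\kappa-1)$, which is exactly the definition of $\beta$ in \eqref{E:beta-tp} (note $\kappa > 1$ under \eqref{E:alpha}, so all exponents are well defined). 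The one step requiring genuine care is the scaling identity \eqref{E:WnScale}; everything after it is a bookkeeping of exponents, arranged precisely so that the definitions of $\beta$ and $t_p$ produce the final cancellation.
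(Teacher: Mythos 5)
Your proof is correct and follows essentially the same strategy as the paper's: apply Proposition~\ref{P:Wnprop1} to a dilated version of $f$ and choose the dilation so that the $(q-1)$ factor is absorbed into $t_p^\beta$, with the exponent bookkeeping forced exactly by the definitions of $\beta$ and $t_p$ in \eqref{E:beta-tp}. The only cosmetic difference is that you parameterize the dilation on the Fourier side by a single $\lambda$ and absorb the normalization prefactor into $T_\lambda$, which bakes in the paper's constraint \eqref{E:VW-1} automatically, whereas the paper introduces $f_\tau(x)=\tau^V f(\tau^W x)$ with three free parameters $V,W,\tau$ and then solves the resulting system; these are the same computation.
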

\begin{proof}
	From Proposition \ref{P:Wnprop1}, we see that for any $f\in \mathcal{H}$, the inequality
	\eqref{E:Wnprop1} holds. For some constants $V,W>0$, which will be determined in this proof, let
	$f_\tau(x):=\tau^V f(\tau^W x)$ be a scaled version of $f$. It is clear that $f_\tau\in
	\mathcal{H}$. By some elementary scaling arguments (see the proof of the Lemma 6.4 of
	\cite{BCC21}), one can show that
	\begin{gather}
		\Norm{f_\tau}_{\mathcal{H}}^2        = \tau^{2(V-dW)+W\alpha} \Norm{f}_\mathcal{H}^2 \quad \text{and}\\
		W_n \left(t,\mathcal{F}f_\tau\right) = \tau^{n\left[ V-W\left( (d-\alpha)+ \frac{a}{b}(b+r) \right) \right]}W_n\left(t\tau^{\frac{a}{b}W}, \mathcal{F}f\right).
	\end{gather}
	Hence, an application of Proposition \ref{P:Wnprop1} to $f_\tau$ shows that
	\begin{align*}
    & \Norm{u(t,0)}_p  \ge \exp\left\{ -\frac{1}{2}(q-1)\Norm{f_\tau}_{\mathcal{H}}^2 \right\} \left| \sum_{n \ge 0} \theta^{n/2} W_n(t,\mathcal{F}f_\tau) \right| \\
    & =\exp\left\{ -\frac{1}{2}(q-1)\tau^{2(V-dW)+W\alpha} \Norm{f}_\mathcal{H}^2 \right\} \left| \sum_{n \ge 0} \theta^{n/2} \tau^{n\left[ V-W\left( (d-\alpha)+ \frac{a}{b}(b+r) \right) \right]}W_n\left(t\tau^{\frac{a}{b}W}, \mathcal{F}f\right) \right|.
	\end{align*}
  Comparing the above lower bound with that in \eqref{E:Wnprop2}, we obtain the following two
	relations with three unknowns $W$, $V$ and $\tau$:
	\begin{gather}
		\label{E:VW-1}
		V-W\left( (d-\alpha)+ \frac{a}{b}(b+r) \right) = 0, \\
		(p-1)^{-1}\tau^{2(V-dW)+W\alpha}               = t\tau^{\frac{a}{b}W}.
		\label{E:VW-2}
	\end{gather}
	Since \eqref{E:VW-2} should hold for all $t>0$ and $p\ge 2$, one can choose $\tau = (p-1)t$ to
	reduce the relation \eqref{E:VW-2} to
	\begin{align*}
		\tau^{2(V-dW)+W\alpha} = \tau^{\frac{a}{b}W+1},
	\end{align*}
	which then gives the following equation
	\begin{equation}
		\label{E:VW-3}
		2(V-dW)+W\alpha= 1 + \frac{a}{b}W .
	\end{equation}
	Now solve the linear equations \eqref{E:VW-1} and \eqref{E:VW-3} for $W$ and $V$ to see that
	\begin{align*}
		\begin{cases}
			W = \displaystyle \dfrac{b}{a}\left( \beta -1 \right), \bigskip \\
			V = \displaystyle \left(\frac{a}{b}(b+r)-\alpha +d\right)\frac{b}{a}(\beta -1),
		\end{cases}
		\qquad
		\text{with }\beta := \dfrac{2(b+r)-\dfrac{b\alpha}{a}}{ 2(b+r)-\dfrac{b\alpha}{a} -1 }.
	\end{align*}
	Therefore, the scaling $f_\tau$ and $t_p^\beta$ should be
	\begin{align*}
		f_\tau(x) = \tau^{\left(\frac{a}{b}(b+r)-\alpha+d\right)\frac{b}{a}(\beta -1)} f\left(\tau^{\frac{b}{a}(\beta -1)}x\right)
		\quad \text{and} \quad
		t_p^\beta := t\tau^{\frac{a}{b}W} = (p-1)^{\beta -1} t^\beta,
	\end{align*}
	respectively. This completes the proof of Proposition \ref{P:Wnprop2}.
\end{proof}

To remove the absolute value sign in Proposition \ref{P:Wnprop2}, we want to identify all the $f \in
\mathcal H$ for which $W_n(t,\mathcal{F}f)$ is nonnegative. In fact, if we consider the space
\begin{align*}
	\mathcal{H}_+ = \left\{ f\in \mathcal{H} : \text{$f$ is nonnegative and nonnegative definite}\right\},
\end{align*}
then by Plancherel's theorem, for $f\in\mathcal{H}_+$,
\begin{align*}
		W_n(t,\mathcal{F}f) = \int_{[0,t]^n<} \int_{\R^{nd}} \prod_{k=1}^n(f*\gamma)(x_k) \prod_{k=1}
		G(s_k - s_{k-1}, x_k-x_{k-1}) \ud \vec{x} \ud \vec{s} =: U_n(t,f) \ge 0
\end{align*}
with the convention that $s_0=0$ and $x_0=0$, where we have used the fact that the fundamental
solution $G(t,x)$ is nonnegative (under Assumption \ref{A:Nonnegative}).  Now define
\begin{align*}
	W_n(t) := \sup_{f\in\mathcal{H},\: \Norm{f}_{\mathcal{H}}=1} W_n(t,\mathcal{F}(f)) \quad \text{and} \quad
	U_n(t) :=  \sup_{f\in\mathcal{H}_+, \Norm{f}_{\mathcal{H}}=1} U_n(t,f).
\end{align*}
It is clear that $W_n(t) \ge U_n(t) \ge 0$.

\begin{proposition}
	\label{P:LogU}
	If $\tau$ is an exponential random variable with mean one, then
	\begin{align*}
		\liminf_{n\to \infty} \frac{1}{n} \log \mathbb{E} (U_n(\tau) ) \ge \log(\rho^{1/2}).
	\end{align*}
\end{proposition}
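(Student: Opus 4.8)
The plan is to follow the strategy of \cite[Section~6]{BCC21}: bound $\mathbb{E}[U_n(\tau)]$ below by the chain associated with a single well-chosen test function, identify the resulting exponential rate with the top of the spectrum of an explicit symmetric operator, and recognize that the supremum of this spectral value over admissible test functions equals $\rho^{1/2}$. To set this up, note that $U_n(t)\ge U_n(t,f)$ for each fixed $f\in\mathcal{H}_+$ with $\Norm{f}_{\mathcal{H}}=1$, so integrating against $e^{-t}$, using the identity $U_n(t,f)=W_n(t,\mathcal{F}f)$ recorded above for $f\in\mathcal{H}_+$, and invoking Lemma~\ref{L:LWn} (whose hypothesis \eqref{E:muassumption} holds under \eqref{E:alpha}) gives
\[
	\mathbb{E}[U_n(\tau)]\ \ge\ \int_{(\R^d)^n}\prod_{k=1}^n\mathcal{F}f(\xi_k)\prod_{k=1}^n\frac{1}{1+\frac{\nu}{2}|\xi_k+\cdots+\xi_n|^a}\,\mu(\ud\xi_1)\cdots\mu(\ud\xi_n)=:I_n(\mathcal{F}f).
\]
Writing $\psi:=\mathcal{F}f$ (nonnegative, even, with $\Norm{\psi}_{L^2(\mu)}=1$), the substitution $\eta_k:=\xi_k+\cdots+\xi_n$ has unit Jacobian and replaces the density $\prod_k\varphi(\xi_k)$ of $\mu^{\otimes n}$ by $\prod_k\varphi(\eta_k-\eta_{k+1})$ with $\eta_{n+1}:=0$; setting $g(\eta):=\bigl(1+\tfrac{\nu}{2}|\eta|^a\bigr)^{-1/2}$ and introducing the symmetric nonnegative kernel $\mathcal{K}_\psi(x,y):=g(x)\psi(x-y)\varphi(x-y)g(y)$ on $L^2(\R^d)$, a regrouping of the factors yields $I_n(\psi)=\langle g,\,\mathcal{K}_\psi^{\,n-1}(g\psi\varphi)\rangle_{L^2(\R^d)}$ (checked directly for small $n$ and then evident).

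The core of the argument is then the transfer-operator estimate $\liminf_{n\to\infty}\tfrac1n\log I_n(\psi)\ge\log\lambda(\mathcal{K}_\psi)$, where $\lambda(\mathcal{K}_\psi):=\Norm{\mathcal{K}_\psi}_{\mathrm{op}}=\sup_{\Norm{h}_{L^2}=1}\langle h,\mathcal{K}_\psi h\rangle$, the second equality because $\mathcal{K}_\psi$ is self-adjoint with a nonnegative kernel. One takes $\psi$ to be a positive Schwartz function, so that $\psi\varphi\in L^1(\R^d)$, $\mathcal{K}_\psi$ is bounded, and $\mathcal{K}_\psi$ is positivity-improving. Fix $h_\varepsilon\ge0$ of compact support with $\Norm{h_\varepsilon}_{L^2}=1$ and $\langle h_\varepsilon,\mathcal{K}_\psi h_\varepsilon\rangle\ge\lambda(\mathcal{K}_\psi)-\varepsilon$. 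Positivity improvement makes $\mathcal{K}_\psi g$ and $\mathcal{K}_\psi(g\psi\varphi)$ continuous and strictly positive, hence bounded below on $\operatorname{supp}(h_\varepsilon)$, so $\mathcal{K}_\psi g\ge c_1 h_\varepsilon$ and $\mathcal{K}_\psi(g\psi\varphi)\ge c_2 h_\varepsilon$ pointwise; since every power $\mathcal{K}_\psi^m$ has a nonnegative kernel, $I_n(\psi)=\langle\mathcal{K}_\psi g,\,\mathcal{K}_\psi^{\,n-3}\mathcal{K}_\psi(g\psi\varphi)\rangle\ge c_1c_2\langle h_\varepsilon,\mathcal{K}_\psi^{\,n-3}h_\varepsilon\rangle$. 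Finally, Jensen's inequality for the spectral measure of $\mathcal{K}_\psi^{2}\ge0$ relative to $h_\varepsilon$ gives $\langle h_\varepsilon,\mathcal{K}_\psi^{\,2m}h_\varepsilon\rangle=\Norm{\mathcal{K}_\psi^{m}h_\varepsilon}^2\ge\langle h_\varepsilon,\mathcal{K}_\psi^{2}h_\varepsilon\rangle^{m}\ge\langle h_\varepsilon,\mathcal{K}_\psi h_\varepsilon\rangle^{2m}\ge(\lambda(\mathcal{K}_\psi)-\varepsilon)^{2m}$ (odd exponents are absorbed into constants via one more bound $\mathcal{K}_\psi h_\varepsilon\ge c_3 h_\varepsilon$), and letting $\varepsilon\to0$ establishes the estimate.

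It remains to optimize over $\psi$. Substituting $z=x-y$ and using the Cauchy--Schwarz inequality in $L^2(\mu)$,
\[
	\lambda(\mathcal{K}_\psi)=\sup_{\Norm{h}_{L^2}=1}\langle\psi,\,(gh)\star(gh)\rangle_{L^2(\mu)}\ \le\ \sup_{\Norm{h}_{L^2}=1}\Norm{(gh)\star(gh)}_{L^2(\mu)}=\rho_{\nu,a}(\gamma)^{1/2},
\]
where $(u\star v)(z):=\int_{\R^d}u(z+y)v(y)\,\ud y$ and the last identity is precisely the definition \eqref{E:rho} of $\rho=\rho_{\nu,a}(\gamma)$ (finite by Theorem~\ref{T:Variation}). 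Conversely, by density a near-maximizer $h$ of \eqref{E:rho} may be taken to be a positive Schwartz function; then $w:=gh>0$ everywhere, $w\star w$ is a strictly positive Schwartz function, and $\psi:=(w\star w)/\Norm{w\star w}_{L^2(\mu)}$ is admissible, i.e.\ $f:=\mathcal{F}^{-1}\psi\in\mathcal{H}_+$, because $w\star w$ is simultaneously nonnegative (so $\mathcal{F}f\ge0$, i.e.\ $f$ nonnegative definite) and a positive-definite function (so $f\ge0$); for this $\psi$ one has $\lambda(\mathcal{K}_\psi)\ge\langle\psi,w\star w\rangle_{L^2(\mu)}=\Norm{w\star w}_{L^2(\mu)}\ge\rho^{1/2}-\varepsilon$. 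Combining the three displays gives $\liminf_n\tfrac1n\log\mathbb{E}[U_n(\tau)]\ge\log(\rho^{1/2}-\varepsilon)$ for every $\varepsilon>0$, and $\varepsilon\to0$ finishes the proof. The main obstacle is the transfer-operator step: $\mathcal{K}_\psi$ is a non-compact, convolution-type operator, so $\lambda(\mathcal{K}_\psi)$ need not be an eigenvalue and there is no top eigenfunction to expand in --- the lower bound on $\langle g,\mathcal{K}_\psi^{\,n-1}(g\psi\varphi)\rangle$ has to be extracted from the positivity-improving property together with the spectral (Jensen) estimate rather than via a Perron--Frobenius expansion; a secondary point is the careful verification that the $\psi$ built in the last step genuinely yields an $f$ in $\mathcal{H}_+\subset\mathcal{H}$ (membership in $\mathcal{H}$ via a density argument).
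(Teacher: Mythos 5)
Your proposal is correct and follows the same overall strategy as the paper's proof: lower-bound $\mathbb{E}[U_n(\tau)]$ by a single admissible test function, identify the exponential rate with the quantity the paper calls $\rho(\mathcal{F}f)$ (your $\lambda(\mathcal{K}_\psi)$ — the two agree after the substitution $\eta_k=\xi_k+\cdots+\xi_n$), and then optimize over $f$ to recover $\rho_{\nu,a}^{1/2}$. The main difference is that the paper simply cites \cite[Assumption~C]{BCC21} (and ultimately \cite{BCR09}) for the step $\liminf_n \tfrac1n\log I_n(\mathcal{F}f)\ge\log\rho(\mathcal{F}f)$, whereas you supply a self-contained proof via the transfer operator $\mathcal{K}_\psi$, positivity improvement, and Jensen's inequality for the spectral measure; this is indeed the mechanism in the cited works, so your fleshing-out is faithful. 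The optimization step is also phrased differently: the paper passes to physical space, introducing $g_h=(2\pi)^{d/2}\mathcal{F}^{-1}\bigl(h/\sqrt{1+\tfrac{\nu}{2}|\cdot|^a}\bigr)$ and choosing $f=|g_h|^2/\Norm{|g_h|^2}_{\mathcal H}$, while you stay in Fourier variables and take $\psi=(w\star w)/\Norm{w\star w}_{L^2(\mu)}$ with $w=gh$; these are Fourier duals of the same choice, since $\mathcal{F}(|g_h|^2)=(gh)\star(gh)$.

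One small caveat: after the substitution your $\psi$ is not literally Schwartz (the weight $g(\xi)=(1+\tfrac{\nu}{2}|\xi|^a)^{-1/2}$ is not smooth at the origin for non-even $a$), and $\mathcal{K}_\psi(g\psi\varphi)$ may only be lower semicontinuous rather than continuous when $\alpha\le d/2$. Neither issue is fatal — what you actually use is that $\psi$ is bounded, strictly positive, with enough decay that $\psi\varphi\in L^1$, and that $\mathcal{K}_\psi g$, $\mathcal{K}_\psi(g\psi\varphi)$, $\mathcal{K}_\psi h_\varepsilon$ are strictly positive a.e.\ and bounded below on $\operatorname{supp}(h_\varepsilon)$, which follows from lower semicontinuity (convolutions of nonnegative $L^1$ functions are l.s.c.) plus compactness. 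It would be worth stating these weaker hypotheses explicitly rather than invoking ``Schwartz'' and ``continuous.''
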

\begin{proof}
	We start by letting $\tau$ be an exponential random variable with mean one. With this,
	\begin{align*}
		\mathbb{E}(U_u(\tau)) = \int_0^\infty e^{-t} U_n(t) \ud t
		\ge \sup_{f\in \mathcal{H}_+,\: \Norm{f}_\mathcal{H} =1} \int_0^\infty e^{-t} U_n(t,f) \ud t.
	\end{align*}
	For any $f \in \mathcal{H}_+$ with $\Norm{f}_{\mathcal{H}}= 1$, by Bochner's theorem,
	$\mathcal{F}f$ is nonnegative and nonnegative definite, which further implies that $\mathcal{F}f$
	is even. In addition, Lemma \ref{L:LWn} gives us that
	\begin{align*}
		\int_0^{\infty} e^{-t} U_n(t,f) \ud t & = \int_0^\infty e^{-t} W_n(t,\mathcal{F}f) \ud t \\
                                          & = \int_{\R^{dn}} \prod_{k=1}^n \mathcal{F}f(\xi_k) \prod_{k=1}^n \frac{1}{1+ \frac{\nu}{2}|\xi_k+\cdots + \xi_n|^a} \mu(\ud \xi_1) \cdots \mu(\ud \xi_n).
	\end{align*}
	By the same arguments as  \cite[Assumption C]{BCC21} with the replacement \eqref{E:Changes} (see
	also the proof of \cite[Theorem 1.2]{BCR09}), we have that
	\begin{align*}
		\liminf_{n\to \infty}\frac{1}{n} \log \int_{\R^{dn}} \prod_{k=1}^n(\mathcal{F}f)(\xi_k) \prod_{k=1}^n\frac{1}{1+\frac{\nu}{2}|\xi_k+\cdots+\xi_n|^a} \mu(\ud \xi_1) \cdots \mu(\ud \xi_n) \ge  \log\rho(\mathcal{F}f),
	\end{align*}
	where $\rho(\cdot)$ is defined on the space of nonnegative and nonnegative definite functions and
	is given by
	\begin{align*}
		\rho(g) := \sup_{\Norm{h}_{L^2(\R^d)}=1} \int_{\R^d}g(\xi)\left[ \int_{\R^d} \frac{h(\xi + \eta)h(\eta)}{\sqrt{1+\frac{\nu}{2}|\xi+\eta|^a}\sqrt{1+\frac{\nu}{2}|\eta|^a}}  \ud \eta\right] \mu(\ud \xi).
	\end{align*}
	Note that since both $\mu$ and $g$ are nonnegative, we may assume $h$ is also nonnegative for the
	remainder of this proof. With this being said, we see that for any $f \in \mathcal{H}_+$ with
	$\Norm{f}_H =1$,
	\begin{align*}
		\liminf_{n \to \infty} \frac{1}{n}\log \mathbb{E}(U_n(\tau)) \ge \log \rho(\mathcal{F}f).
	\end{align*}
	We now need to calculate
	\begin{align*}
		\sup_{f \in \mathcal{H}_+, \: \Norm{f}_{\mathcal{H}}=1} \rho\left(\mathcal{F}f\right).
	\end{align*}
	Consider a nonnegative function $h\in L^2(\R^d)$. The function
	$h(\cdot)/\sqrt{1+\frac{\nu}{2}|\cdot|^a} \in L^2(\R^d)$ so that $g_h(x) := (2\pi)^{d/2}
	\mathcal{F}^{-1}\left( \frac{h(\cdot)}{\sqrt{1+\frac{\nu}{2}|\cdot|^a}} \right)(x)$ is well
	defined. Under these conditions, $g \in W^{1,a}(\R^d)$ with
	\begin{align*}
		\Norm{g}_{W^{1,a}(\R^d)} = \frac{1}{(2\pi)^d} \int_{\R^d} (1+|\xi|^a) |\mathcal{F}g(\xi)|^2\ud\xi
		= \int_{\R^d} \frac{1+|\xi|^a}{1+\frac{\nu}{2}|\xi|^a} |h(\xi)|^2 \ud \xi
		\le C_\nu \Norm{h}_{L^2(\R^d)}^2 <\infty.
	\end{align*}
	Notice that
	\begin{align*}
		\int_{\R^d} \frac{h(\xi +
		\eta)h(\eta)}{\sqrt{1+\frac{\nu}{2}|\xi+\eta|^a}\sqrt{1+\frac{\nu}{2}|\eta|^a}}  \ud \eta
    & = (2\pi)^{-d}\int_{\R^d} \mathcal{F}g_h(\xi-\eta)\mathcal{F}g_h(-\eta) \ud \eta \\
    & =(2\pi)^{-d} (\mathcal{F}g_h*\widetilde{\mathcal{F}g_h})(\xi),
	\end{align*}
	where we have used the notation that $\widetilde{h}(x) = h(-x)$.  Since $h(\cdot)$ is real valued,
	we see that $\widetilde{\mathcal{F}g_h} = \overline{\mathcal{F}\bar{g_h}}= \mathcal{F}\bar{g_h} $.
	Using this and the fact that  $\mathcal{F}(fg) = (2\pi)^{-d}\mathcal{F}(f)*\mathcal{F}(g)$, we see
	that $(2\pi)^{-d}(\mathcal{F}g_h*\widetilde{\mathcal{F}g_h})(\xi)
	=\mathcal{F}\left[|g_h|^2\right](\xi)$. Hence,
	\begin{align*}
		\rho(\mathcal{F}f) & = \sup_{\Norm{h}_{L^2(\R^d)}=1}(2\pi)^{-d} \int_{\R^d}\mathcal{F}f(\xi)(\mathcal{F}g_h*\widetilde{\mathcal{F}g_h})(\xi) \mu(\ud \xi) \\
                       & = \sup_{\Norm{h}_{L^2(\R^d)}=1} \int_{\R^d}\mathcal{F}f(\xi)\mathcal{F}[|g_h|^2](\xi) \mu(\ud \xi)                                   \\
                       & = \sup_{\Norm{h}_{L^2(\R^d)}=1}\langle f, |g_h|^2 \rangle _{\mathcal{H}}.
	\end{align*}
	With this we see that
	\begin{align}
		\label{E:supf}
		\sup_{ f\in \mathcal{H}, \Norm{f}_{\mathcal{H}=1} } \rho(\mathcal{F}f) = 	\sup_{ f\in \mathcal{H}, \Norm{f}_{\mathcal{H}=1} }\sup_{\Norm{h}_{L^2}=1} \langle f, |g_h|^2 \rangle _{\mathcal{H}} = \sup_{\Norm{h}_{L^2}=1} \langle |g_h|^2, |g_h|^2 \rangle^{1/2} _{\mathcal{H}}
	\end{align}
	where the optimal $f$ is chosen to be $|g_h|^2/\Norm{|g_h|^2}_{\mathcal{H}}$. Now we claim that
	\begin{equation}
		\label{E:GH+}
		|g_h|^2 \in \mathcal{H}_+,
	\end{equation}
	which implies that the supremum in \eqref{E:supf} can be restricted to $\mathcal{H}_+$ and
	\begin{align*}
		\sup_{ f \in \mathcal{H},\: \Norm{f}_{\mathcal{H}}=1 } \rho(\mathcal{F}f) =
		\sup_{ f \in \mathcal{H}_+,\: \Norm{f}_{\mathcal{H}}=1 } \rho(\mathcal{F}f) =
		\sup_{\Norm{h}_{L^2}=1} \left\langle |g_h|^2, |g_h|^2 \right\rangle^{1/2} _{\mathcal{H}}.
	\end{align*}
	Then notice that
	\begin{align}
		\label{E:gsq}
		\mathcal{F}( |g_h|^2 )(\xi) = (2\pi)^{-d} (\mathcal{F}g_h*\widetilde{\mathcal{F}g_h})(\xi)
                                = \int_{\R^d} \frac{h(\xi + \eta)h(\eta)}{\sqrt{1+\frac{\nu}{2}|\xi+\eta|^a}\sqrt{1+\frac{\nu}{2}|\eta|^a}} \ud \eta.
	\end{align}
	Hence, from \eqref{E:rho} and \eqref{E:gsq}, we see that
	\begin{align*}
		\sup_{ \Norm{h}_{L^2(\R^d)}=1 }\left\langle |g_h|^2, |g_h|^2 \right\rangle_{\mathcal{H}}
		=\sup_{ \Norm{h}_{L^2(\R^d)}=1 } \int_{\R^d} \left|\mathcal{F}( |g_h|^2 )(\xi)\right|^2 \mu(\ud \xi)
		= \rho_{\nu,a}
	\end{align*}
	which then leads us to the lower bound:
	\begin{align*}
		\liminf_{n\to \infty} \frac{1}{n} \log \mathbb{E} (U_n(\tau) ) \ge \log\left(\rho_{\nu,a}^{1/2}\right).
	\end{align*}
	Therefore, it remains to proving \eqref{E:GH+}. First notice that from the above we see that
	\begin{align*}
		\int_{\R^d} \left| \mathcal{F}\left(|g_h|^2\right)(\xi) \right|^2 \mu(\ud \xi) \le \rho_{\nu,a} < \infty
		\quad \Longrightarrow \quad |g_h(\cdot)|^2 \in \mathcal{H}.
	\end{align*}
	Moreover, since $h$ is nonnegative, from  \eqref{E:gsq}, we see that
	$\mathcal{F}\left(|g_h|^2\right)(\cdot)$ is also nonnegative. The Bochner-Schwarz theorem then
	implies that $|g_h(\cdot)|^2$ is nonnegative definite. It is clear that $|g_h(\cdot)|^2$ is
	nonnegative. This shows that $|g_h(\cdot)|^2 \in \mathcal{H}_+$.  This completes the whole proof of
	Proposition \ref{P:LogU}.
\end{proof}

\begin{lemma}
	\label{L:LogU}
	It holds that
	\begin{align}
		\label{E:LogU}
		\liminf_{n\to \infty} \frac{1}{n}\log \left[ (n!)^{ \frac{b}{a}(a-\frac{\alpha}{2} + \frac{a}{b}r)  } U_n(1) \right]
		\ge \log\left( \frac{\rho_{\nu,a}^{1/2}}{ \left(\frac{b}{a}[ a-\frac{\alpha}{2} + \frac{a}{b}r ]\right)^{\frac{b}{a}(a-\frac{\alpha}{2} + \frac{a}{b}r)} }\right).
	\end{align}
\end{lemma}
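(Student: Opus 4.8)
The plan is to reduce \eqref{E:LogU} to Proposition \ref{P:LogU} by means of a homogeneity identity for $U_n$, together with Stirling's formula \eqref{E:Stirling}. Throughout, write
\[
	w := \frac{b}{a}\left(a-\frac{\alpha}{2}+\frac{a}{b}r\right) = (b+r)-\frac{b\alpha}{2a} = \frac12\left(2(b+r)-\frac{b\alpha}{a}\right),
\]
which is strictly positive under condition \eqref{E:alpha} (since then $2(b+r)-b\alpha/a>1$). \textbf{Step 1: the scaling identity $U_n(t)=t^{nw}U_n(1)$.} Fix $f\in\mathcal{H}_+$ with $\Norm{f}_{\mathcal{H}}=1$ and, for $c>0$, set $f_c(x):=c^{W(d-\alpha/2)}f(c^{W}x)$ with $W>0$. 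Because $\gamma$ is homogeneous of degree $-\alpha$ and the spectral density $\varphi$ of \eqref{E:specfunction} is homogeneous of degree $-(d-\alpha)$, the exponent $W(d-\alpha/2)$ is exactly the one for which $\Norm{f_c}_{\mathcal{H}}=\Norm{f}_{\mathcal{H}}=1$; one also checks $f_c\in\mathcal{H}_+$ and $(f_c*\gamma)(x)=c^{W\alpha/2}(f*\gamma)(c^{W}x)$. Substituting this into the definition of $U_n(t,f_c)$, changing variables $x_k\mapsto c^{-W}x_k$ in space, using the scaling of $G$ from \eqref{E:greenscale} (equivalently $G(t,x)=\lambda^{db/a-(b+r-1)}G(\lambda t,\lambda^{b/a}x)$) with $\lambda=c^{-Wa/b}$, and finally $s_k\mapsto c^{-Wa/b}s_k$ in time, all powers of $c$ collect into
\[
	U_n(t,f_c)=c^{-\,nwWa/b}\;U_n\!\left(t\,c^{Wa/b},\,f\right).
\]
Since $f\mapsto f_c$ is a bijection of $\{f\in\mathcal{H}_+:\Norm{f}_{\mathcal{H}}=1\}$ onto itself, taking suprema gives $U_n(t)=c^{-nwWa/b}U_n(tc^{Wa/b})$; writing $s=tc^{Wa/b}$ and then setting $t=1$ yields $U_n(s)=s^{nw}U_n(1)$ for all $s>0$. (Here $U_n(1)<\infty$ under \eqref{E:alpha}; if $U_n(1)=\infty$ the claimed bound is trivial.)

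\textbf{Steps 2 and 3.} With $\tau$ an exponential random variable of mean one, Step 1 gives
\[
	\E\big(U_n(\tau)\big)=\int_0^\infty e^{-t}U_n(t)\,\ud t=U_n(1)\int_0^\infty e^{-t}t^{nw}\,\ud t=U_n(1)\,\Gamma(nw+1).
\]
Hence $(n!)^{w}U_n(1)=\dfrac{(n!)^{w}}{\Gamma(nw+1)}\,\E\big(U_n(\tau)\big)$, so that
\[
	\frac1n\log\!\Big[(n!)^{w}U_n(1)\Big]=\frac1n\log\frac{(n!)^{w}}{\Gamma(nw+1)}+\frac1n\log\E\big(U_n(\tau)\big).
\]
By Stirling's formula \eqref{E:Stirling} the first term on the right converges to $-w\log w$, while Proposition \ref{P:LogU} gives $\liminf_{n\to\infty}\frac1n\log\E(U_n(\tau))\ge\log(\rho_{\nu,a}^{1/2})$. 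Since the first term converges, the $\liminf$ of the sum is at least $-w\log w+\log(\rho_{\nu,a}^{1/2})=\log\!\big(\rho_{\nu,a}^{1/2}/w^{w}\big)$, which is exactly \eqref{E:LogU}.

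The main obstacle is Step 1: the only real work is the careful bookkeeping of the exponents of $c$ produced by the three rescalings — homogeneity of $\gamma$, homogeneity of $\varphi$ (hence scale-invariance of $\Norm{\cdot}_{\mathcal{H}}$, which dictates the exponent $W(d-\alpha/2)$ in $f_c$), and the $G$-scaling \eqref{E:greenscale} — together with the observation that they assemble into the single exponent $-nwWa/b$ and that the supremum defining $U_n(t)$ therefore transforms cleanly. This is the analogue, in the present fractional setting, of the scaling step used for the stochastic wave equation in \cite[Section 6]{BCC21}; once it is in place, Steps 2--3 are routine.
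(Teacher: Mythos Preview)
Your proof is correct and follows essentially the same route as the paper's: establish the homogeneity $U_n(t)=t^{nw}U_n(1)$ (the paper just states this as an ``elementary scaling argument'' in \eqref{E:WU}, while you spell it out via the rescaled test function $f_c$), then compute $\E(U_n(\tau))=\Gamma(nw+1)U_n(1)$ and combine Proposition \ref{P:LogU} with Stirling's formula \eqref{E:Stirling}. The order in which you invoke Stirling and Proposition \ref{P:LogU} is swapped relative to the paper, but the argument is the same.
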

\begin{proof}
	Let $\tau$ be an exponential random variable with mean one.
	Notice that by some elementary scaling arguments, we have that for all $t>0$,
	\begin{align}
		\label{E:WU}
		W_n(t) = t^{n\frac{b}{a}\left(a- \frac{\alpha}{2}+\frac{a}{b}r \right)} W_n(1) \quad \text{and} \quad
		U_n(t) = t^{n\frac{b}{a}\left(a- \frac{\alpha}{2}+\frac{a}{b}r \right)} U_n(1),
	\end{align}
	which then imply that
	\begin{align*}
		\mathbb{E}\left[ U_n(\tau) \right] = \mathbb{E}\left( \tau^{n\frac{b}{a}(a-\frac{\alpha}{2}+\frac{a}{b}r)} \right) U_n(1)
                                       = \Gamma\left( n\frac{b}{a}\left(a-\frac{\alpha}{2} +\frac{a}{b}r \right) +1 \right) U_n(1).
	\end{align*}
	Then by Proposition \ref{P:LogU},
	\begin{align}\label{E:lb1}
		\liminf_{n\to \infty} \frac{1}{n} \log \left[ \Gamma\left( n\frac{b}{a}\left(a-\frac{\alpha}{2} +\frac{a}{b}r \right) +1 \right) U_n(1) \right] \ge \log\left(\rho_{\nu, a}^{1/2}\right).
	\end{align}
	Therefore, \eqref{E:LogU} is proven by noticing that, thanks to \eqref{E:Stirling},
	\begin{align*}
		\liminf_{n\to \infty} \frac{1}{n} \log \left( \frac{ \Gamma\left( n\frac{b}{a}\left(a-\frac{\alpha}{2} +\frac{a}{b}r \right) +1 \right) }{ (n!)^{\frac{b}{a}\left(a-\frac{\alpha}{2} +\frac{a}{b}r \right)} } \right )
		= \frac{b}{a}\left(a-\frac{\alpha}{2} +\frac{a}{b}r \right) \log\left( \frac{b}{a}\left(a-\frac{\alpha}{2} +\frac{a}{b}r \right) \right),
	\end{align*}
	where the condition $\frac{b}{a}\left(a-\frac{\alpha}{2} +\frac{a}{b}r \right) >0$ is guaranteed
	by \eqref{E:alpha4} (or \eqref{E:alpha}).
\end{proof}

We need one last lemma before the proof of the lower bound:

\begin{lemma}
	\label{L:logUnlb}
	For any $k,\theta >0$, there exists a constant $c_1 = c_1(\alpha,\mathcal{M}, k, \theta) >0$ such
	that, by setting $n_t = [c_1 t]$, it holds that
	\begin{equation}\label{E:69ineq}
		\liminf_{t \to \infty} \frac{1}{t} \log( k^{n_t} \theta^{n_t/2}U_{n_t}(t)) \ge
		\left( k \sqrt{\theta} \right)^{\frac{2a}{2ab-\alpha b + 2ar}} \left( \rho_{\nu,a}^{1/2} \right)^{\frac{2a}{2ab-\alpha b + 2ar}}.
	\end{equation}
\end{lemma}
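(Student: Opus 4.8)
The plan is to convert the $n\to\infty$ lower bound on $U_n(1)$ furnished by Lemma \ref{L:LogU} into a $t\to\infty$ lower bound on $U_{n_t}(t)$, using the scaling identity \eqref{E:WU} and then choosing the proportionality constant $c_1$ in $n_t=[c_1t]$ so as to maximize the resulting exponential rate. Write
\[
	\kappa:=\tfrac{b}{a}\bigl(a-\tfrac{\alpha}{2}+\tfrac{a}{b}r\bigr)=(b+r)-\tfrac{b\alpha}{2a},
\]
which is strictly positive by \eqref{E:alpha4} (hence by \eqref{E:alpha}), and record the identity $2a\kappa=2ab-\alpha b+2ar$, so that the exponent $\tfrac{2a}{2ab-\alpha b+2ar}$ appearing in \eqref{E:69ineq} equals $1/\kappa$ and the target inequality reads $\liminf_{t\to\infty}\tfrac1t\log\bigl(k^{n_t}\theta^{n_t/2}U_{n_t}(t)\bigr)\ge\bigl(k\sqrt{\theta}\,\rho_{\nu,a}^{1/2}\bigr)^{1/\kappa}$. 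By $U_n(t)=t^{n\kappa}U_n(1)$ from \eqref{E:WU}, it is enough to study $k^{n_t}\theta^{n_t/2}t^{n_t\kappa}U_{n_t}(1)$.

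Set $c_\rho:=\rho_{\nu,a}^{1/2}/\kappa^{\kappa}$. Fix $\epsilon>0$; by Lemma \ref{L:LogU} there is $N=N(\epsilon)$ with $U_n(1)\ge(c_\rho e^{-\epsilon})^n/(n!)^{\kappa}$ for all $n\ge N$, so that for all $t$ large enough that $n_t\ge N$,
\[
	k^{n_t}\theta^{n_t/2}U_{n_t}(t)\ \ge\ \frac{\bigl(k\sqrt{\theta}\,c_\rho e^{-\epsilon}\bigr)^{n_t}\,t^{n_t\kappa}}{(n_t!)^{\kappa}}.
\]
Taking $\tfrac1t\log(\cdot)$, substituting $n_t=[c_1t]$, and inserting Stirling's formula $\log(n_t!)=n_t\log n_t-n_t+O(\log t)$, the two $\log t$ contributions (one from $t^{n_t\kappa}$, one from $n_t\log n_t$) cancel identically, and one obtains
\[
	\liminf_{t\to\infty}\tfrac1t\log\bigl(k^{n_t}\theta^{n_t/2}U_{n_t}(t)\bigr)\ \ge\ c_1\bigl[\log\bigl(k\sqrt{\theta}\,c_\rho\bigr)-\epsilon+\kappa-\kappa\log c_1\bigr]=:\phi_\epsilon(c_1).
\]

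I would then choose $c_1:=\bigl(k\sqrt{\theta}\,c_\rho\bigr)^{1/\kappa}=\kappa^{-1}\bigl(k\sqrt{\theta}\,\rho_{\nu,a}^{1/2}\bigr)^{1/\kappa}$; by \eqref{E:rhoMrep} this equals $\kappa^{-1}\bigl(k\sqrt{\theta}\,\nu^{-\alpha/(2a)}\mathcal{M}_a^{(2a-\alpha)/(2a)}\bigr)^{1/\kappa}$, which depends only on $\alpha,\mathcal{M},k,\theta$ (and the fixed parameters $a,b,r,\nu$), as the statement requires. An elementary one-variable optimization shows that $c_1\mapsto\phi_0(c_1)$ attains its maximum precisely at this value, with $\phi_0(c_1)=\kappa\bigl(k\sqrt{\theta}\,c_\rho\bigr)^{1/\kappa}=\bigl(k\sqrt{\theta}\,\rho_{\nu,a}^{1/2}\bigr)^{1/\kappa}$. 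Since $\phi_\epsilon(c_1)=\phi_0(c_1)-\epsilon c_1$ and the left-hand side of the previous display does not depend on $\epsilon$, letting $\epsilon\downarrow0$ gives exactly \eqref{E:69ineq}.

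The substantive analytic input — the variational identification of the exponential growth rate — is already contained in Lemma \ref{L:LogU} (built on Proposition \ref{P:LogU} and Theorem \ref{T:Variation}), so no new estimates are needed. The only mildly delicate point, and the main thing to be careful about, is the coupling of the two limiting regimes: the $n\to\infty$ estimate for $U_n(1)$ against the $t\to\infty$ estimate for $U_{n_t}(t)$ with $n_t$ growing linearly in $t$. In particular one must verify that the divergent $\log t$ terms cancel so that the rate is finite, and that the proportionality constant $c_1$ maximizing this rate is exactly the one producing the constant on the right of \eqref{E:69ineq}; the rest is a routine Stirling estimate followed by an elementary calculus optimization.
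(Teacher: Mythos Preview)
Your proof is correct and follows essentially the same route as the paper's: invoke Lemma \ref{L:LogU} with an $\epsilon$-slack, use the scaling $U_n(t)=t^{n\kappa}U_n(1)$ together with Stirling's formula to obtain the one-variable function of $c_1$, optimize to find $c_1=(k\sqrt{\theta}\,c_\rho)^{1/\kappa}$, and let $\epsilon\downarrow 0$. The only cosmetic difference is that the paper lets $\epsilon\to 0$ before optimizing over $c$, whereas you fix the optimal $c_1$ first and then send $\epsilon\to 0$; since $c_1$ does not depend on $\epsilon$, both orders give the same bound.
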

\begin{proof}
	Fix an arbitrary $\epsilon > 0$. Lemma \ref{L:LogU} guarantees the existence of an $N_\epsilon >0$ so for all $n>N_\epsilon$
	\begin{align}
		\label{E:LogUnlb-1}
		(n!)^{\frac{b}{a}\left( a - \frac{\alpha}{2} + \frac{a}{b}r \right)} U_n(1) \ge \exp\left( n(\log(R)-\epsilon) \right) = R^n e^{-n\epsilon }
	\end{align}
	where
	\begin{align*}
		R = \rho_{\nu,a}^{1/2}  \left( \frac{b}{a}\left[ a - \frac{\alpha}{2} + \frac{a}{b}r \right ] \right)^{- \frac{b}{a}\left( a - \frac{\alpha}{2} + \frac{a}{b}r \right) }.
	\end{align*}
	Now fix a $c>0$ and let $n_t := [ct]$. Notice that $n_t \ge N_\epsilon$ for any $t>t_\epsilon :=
	(N_\epsilon +1)/c$.  For $t > t_\epsilon$, from \eqref{E:LogUnlb-1}, we have
	\begin{align}
		\label{E:69ineq2}
		k^{n_t}\theta^{n_t/2}U_{n_t}(t)
		= 	k^{n_t} \theta^{n_t/2} t^{\frac{b}{a} \left( a-\frac{\alpha}{2} + \frac{a}{b}r \right)n_t } U_{n_t}(1)
		\ge \frac{ k^{n_t} \theta^{n_t/2} t^{\frac{b}{a} \left( a-\frac{\alpha}{2} + \frac{a}{b}r \right)n_t } }{ (n!)^{\frac{b}{a} \left(a-\frac{\alpha}{2} +\frac{a}{b}r \right)} } R^{n_t}e^{-n_t \epsilon}.
	\end{align}
	Notice that $[ct]/t \to c$ as $t \to \infty$ which means that $n_t/t \to c$ as $t \to \infty$.
	With this we can say
	\begin{align*}
		\liminf_{t \to \infty} \frac{1}{t} \log \left( k^{n_t} \theta^{n_t/2} U_{n_t}(t) \right)
    = c \liminf_{t\to \infty} \frac{1}{n_t} \log \left( k^{n_t} \theta^{n_t/2} U_{n_t}(t) \right)
    = : I(n_t).
	\end{align*}
	Now, by \eqref{E:69ineq2}, we have that
	\begin{align*}
		I(n_t) & \ge c \liminf_{t\to \infty} \frac{1}{n_t} \log \left( (kR\sqrt{\theta})^{n_t} \frac{ t^{\frac{b}{a} \left( a-\frac{\alpha}{2} + \frac{a}{b}r \right)n_t } }{ (n_t!)^{\frac{b}{a}\left( a -\frac{\alpha}{2} + \frac{a}{b}r \right) }  }\right) - c\epsilon                                                                                                  \\
           & = c \log(k \sqrt{\theta}R) + c \liminf_{t \to \infty} \frac{1}{n_t} \log \left[  \left(\frac{t}{n_t}\right)^{\frac{b}{a}\left(a-\frac{\alpha}{2} + \frac{a}{b}r\right)n_t} \frac{ n_t^{\frac{b}{a}\left( a- \frac{\alpha}{2} + \frac{a}{b}r \right)n_t } } { (n_t!)^{\frac{b}{a}\left( a - \frac{\alpha}{2} + \frac{a}{b}r \right) } }\right] - c \epsilon \\
           & = c \log(k \sqrt{\theta}R) - c\frac{b}{a}\left( a - \frac{\alpha}{2} + \frac{a}{b}r \right) \log(c) + c\frac{b}{a}\left( a - \frac{\alpha}{2} + \frac{a}{b}r \right) \liminf_{t \to \infty} \frac{1}{n_t}\log\left(\frac{n_t^{n_t}}{(n_t)!}\right) - c\epsilon                                                                                             \\
           & = c \log(k \sqrt{\theta}R) - c\frac{b}{a}\left( a - \frac{\alpha}{2} + \frac{a}{b}r \right) \log(c) + c\frac{b}{a}\left( a - \frac{\alpha}{2} + \frac{a}{b}r \right)  - c\epsilon
	\end{align*}
	and letting $\epsilon$ tend to $0$ we see that
	\begin{align*}
		I(n_t) \ge c \left[ \log(k \sqrt{\theta}R) - \frac{b}{a}\left( a - \frac{\alpha}{2} + \frac{a}{b}r \right) \log(c) + \frac{b}{a}\left( a - \frac{\alpha}{2} + \frac{a}{b}r \right) \right] =: h(c).
	\end{align*}
	In order to maximize $h(c)$, notice that
	\begin{align*}
		h'(c) = 0 \quad \Longleftrightarrow  \quad  c^*=(k \sqrt{\theta}R)^{\frac{a}{b\left( a - \frac{\alpha}{2} + \frac{a}{b}r \right)}}.
	\end{align*}
	After plugging $c^*$ and replacing $R$ we arrive at the following inequality
	\begin{align*}
		I(n_t) \ge (k\sqrt{\theta})^{\frac{a}{b\left( a - \frac{\alpha}{2} + \frac{a}{b}r \right)} }(\rho_{\nu,a}^{1/2})^{\frac{a}{b\left( a - \frac{\alpha}{2} + \frac{a}{b}r \right)}},
	\end{align*}
	which proves \eqref{E:69ineq} after some simplification.
\end{proof}

We are now ready to prove \eqref{E:momentasym-lower}.

\begin{proof}[Proof of \eqref{E:momentasym-lower}]
	By proposition \ref{P:Wnprop2}, for and $p,q >0$ with $p^{-1}+q^{-1} = 1$ we have that
	\begin{align*}
		\Norm{u(t,0)}_p \ge \exp\left\{ -\frac{1}{2} t_p^\beta \Norm{f}_{\mathcal{H}}^2 \right\}\left| \sum_{n \ge 0} \theta^{n/2} W_n(t_p^\beta, \mathcal{F}f) \right|.
	\end{align*}
	We now take the supremum over all $f \in \mathcal{H}_+$ with $\Norm{f}_{\mathcal{H}}=k >0 $.
	Recall that $W_n(t,\phi) = k^nW_n(t,\phi/k)$ for $\phi \in L^2(\mu)$ and the non-negativity of $W_n(t,\cdot)$ on $\mathcal{H}_+$. Let $c>0$.
	\begin{align*}
		\Norm{u(t,0)}_p & \ge 	\exp\left\{ -\frac{1}{2} t_p^\beta k^2 \right\} \sup_{f\in \mathcal{H}_+ \; \Norm{f}_{\mathcal{H}}=k}\sum_{n \ge 0} \theta^{n/2} W_n(t_p^\beta, \mathcal{F}f) \\
                    & = \exp\left\{ -\frac{1}{2} t_p^\beta k^2 \right\} \sup_{f\in \mathcal{H}_+ \; \Norm{f}_{\mathcal{H}}=1}\sum_{n \ge 0} k^n \theta^{n/2} W_n(t_p^\beta, \mathcal{F}f) \\
                    & \ge \exp\left\{ -\frac{1}{2} t_p^\beta k^2 \right\} \sup_{f\in \mathcal{H}_+ \; \Norm{f}_{\mathcal{H}}=1} k^{n_t} \theta^{{n_t}/2} U_{n_t}(t_p^\beta, f)
	\end{align*}
	where $n_t = [ct_p^\beta]$. Now by choosing $c$ as in Lemma \ref{L:logUnlb} we get that
	\begin{align*}
		\liminf_{n \to \infty} t_p^{-\beta} \log \Norm{u(t,x)}_p & \ge 	\liminf_{n \to \infty} \left( \frac{-\frac{1}{2}t_p^\beta k^2}{t_p^\beta} + \frac{1}{t_p^\beta}\log \left[ \sup_{f\in \mathcal{H}_+ \; \Norm{f}_{\mathcal{H}}=1} k^{n_t} \theta^{{n_t}/2} U_{n_t}(t_p^\beta, f) \right]  \right) \\
                                                             & = -\frac{1}{2}k^2 + k^{ \frac{2a}{2ab-\alpha b +2ar} }(\rho \theta)^{\frac{a}{2ab -\alpha b + 2ar}} =: h(k).
	\end{align*}
	By maximizing $h$ for $k>0$, we see that $h$ is maximized at the point
	\begin{align*}
		k^* = \left( \frac{2ab -\alpha b + 2ar}{2aB} \right)^{ \frac{2ab-\alpha b +2ar}{2a-2[2ab-\alpha b + 2ar]}} \quad \text{with $B=(\theta\rho)^{ \frac{a}{2ab-\alpha b + 2ar} }$}.
	\end{align*}
	Inserting $k^*$ into $h$ gives us that
	\begin{align*}
		h(k^*) = B^{\beta}\left( \frac{2a}{2ab - \alpha b +2ar} \right)^\beta \left( \frac{2ab-\alpha b + 2ar -a }{2a} \right)
	\end{align*}
	and plugging in the value for $B$ proves \eqref{E:momentasym-lower}.
\end{proof}

\section{Appendix}
\label{S:Appendix}

\begin{proof}[Proof of Lemma \ref{L:rhovar}]
	In this proof, $\mu(\ud x) = \varphi(x)\ud x = C_{\alpha,d}|x|^{-(d-\alpha)}\ud x$. By the change of
	variables $x' = \left(\nu/2\right)^{1/a} x$ and $y' = \left(\nu/2\right)^{1/a} y$, we see that
	\begin{align*}
		\rho_{\nu,a} & \left(|\cdot|^{-\alpha}\right) = \sup_{\Norm{f}_{L^2(\R^d)} =1} \int_{\R^d} \left[ \int_{\R^d} \frac{f(x+y)f(y)}{\sqrt{1+\frac{\nu}{2}|x+y|^a } \sqrt{1+\frac{\nu}{2}|y|^a} } \ud y \right]^2 \mu(\ud x) \\
                 & = \left( \frac{\nu}{2} \right)^{-\alpha/a} \sup_{\Norm{f}_{L^2(\R^d)} =1} \int_{\R^d} \left[ \int_{\R^d} \frac{f\left(\left(\frac{\nu}{2}\right)^{-1/a}(x+y)\right)f\left(\left(\frac{\nu}{2}\right)^{-1/a}y\right)}{\sqrt{1+|x+y|^a } \sqrt{1+|y|^a} } \left(\frac{\nu}{2}\right)^{-d/a} \ud y \right]^2 \varphi(x)\ud x.
	\end{align*}
	By setting $f^*(x) =\left(\nu/2\right)^{-d/(2a)} f\left( \left(\nu/2\right)^{-1/a} x \right)$, we see that
	\begin{align*}
	\rho_{\nu,a} \left(|\cdot|^{-\alpha}\right) & = \left( \frac{\nu}{2} \right)^{-\alpha/a} \sup_{\Norm{f}_{L^2(\R^d)} =1} \int_{\R^d} \left[ \int_{\R^d} \frac{f^*\left(x+y\right)f^*\left(y\right)}{\sqrt{1+|x+y|^a } \sqrt{1+|y|^a} }  \ud y \right]^2 \varphi(x)\ud x \\
                                        & = \left( \frac{\nu}{2} \right)^{-\alpha/a}  \sup_{\Norm{f^*}_2 =1}  \int_{\R^d} \left[ \int_{\R^d} \frac{f^*\left(x+y\right)f^*\left(y\right)}{\sqrt{1+|x+y|^a } \sqrt{1+|y|^a} }  \ud y \right]^2 \varphi(x)\ud x       \\
                                        & = \left( \frac{\nu}{2} \right)^{-\alpha/a}  \rho_{2,a}\left(|\cdot|^{-\alpha}\right),
	\end{align*}
	where the second equality is due to the fact that $\int_{\R^d} f(x)^2 \ud x = \int_{\R^d} f^*(x)^2
	\ud x$.  Then an application of \eqref{E:oldrhovarrep} proves \eqref{E:rhoLam}.

	Similarly, for \eqref{E:LogRho}, by change of variables $\xi_{\sigma(j)}' =
	\left(\nu/2\right)^{1/a} \xi_{\sigma(j)}$, we see that
	\begin{align*}
		\lim_{n \to \infty} \frac{1}{n} & \log \left[ \frac{1}{(n!)^2} \int_{(\R^d)^n} \left( \sum_{\sigma \in \Sigma_n} \prod_{k=1}^n \frac{1}{1+ \frac{\nu}{2}|\sum_{j=k}^n \xi_{\sigma(j)}|^a}\right)^2 \mu (\ud \vec{\xi})\right]                                                                                               \\
                                    & = \lim_{n \to \infty} \frac{1}{n} \log \left[ \frac{1}{(n!)^2}  \left( \frac{\nu}{2} \right)^{-n\alpha/a}  \int_{(\R^d)^n} \left[\sum_{\sigma \in \Sigma_n} \prod_{k=1}^n \frac{1}{1+\left|\sum_{j=k}^n \xi_{\sigma(j)} \right|^a}\right]^2 \mu(\ud \vec{\xi} )\right]                    \\
                                    & = \log\left[ \left( \frac{\nu}{2} \right)^{-\alpha/a} \right] +  \lim_{n \to \infty} \frac{1}{n} \log \left[ \frac{1}{(n!)^2} \int_{(\R^d)^n} \left[\sum_{\sigma \in \Sigma_n} \prod_{k=1}^n \frac{1}{1+\left|\sum_{j=k}^n \xi_{\sigma(j)} \right|^a}\right]^2 \mu(\ud \vec{\xi} )\right] \\
                                    & = \log\left( \left( \frac{\nu}{2} \right)^{-\alpha/a} \right) + \log\left( \rho_{2,a} \left(|\cdot|^{-\alpha}\right) \right)
																		= \log\left( \rho_{\nu, a} \left(|\cdot|^{-\alpha}\right)\right),
	\end{align*}
	where we have applied \eqref{E:oldlim} and \eqref{E:rhoLam}. This completes the proof of Lemma
	\ref{L:rhovar}.
\end{proof}

\begin{proof}[Proof of \eqref{E:fHn}]
	Starting from \eqref{E:fweker}, by the change of variables $t_i' = t_i/c$ and the scaling property in
	\eqref{E:Ffnscale}, we have that
	\begin{align*}
		\mathcal{F}f_n(\cdot,0,ct)(\xi_1,\cdots, \xi_n)
    & = \int_{[0,ct]^n_<} \prod_{k=1}^n \overline{ \mathcal{F}G(t_{k+1} - t_k, \cdot)\left( \sum_{j=1}^k \xi_j \right) }  \ud \vec{t}                                 \\
    & = \int_{[0,t]^n_<} \prod_{k=1}^n  \overline{ \mathcal{F}G\left(c(t_{k+1} - t_k), \cdot \right)\left( \sum_{j=1}^k \xi_j \right) } c^n \ud \vec{t}               \\
    & = \int_{[0,t]^n_<} \prod_{k=1}^n \overline{ c^{b+r-1} \mathcal{F}G\left(t_{k+1} - t_k, \cdot \right)\left( c^{b/a} \sum_{j=1}^k \xi_j \right) } c^n \ud \vec{t} \\
    & = c^{n(b+r)} \mathcal{F}f_n\left(\cdot,0,t\right)\left(c^{b/a}\xi_1, \cdots, c^{b/a}\xi_n\right),
	\end{align*}
	from which we see that
	\begin{align*}
		\int_0^\infty e^{-t} \Norm{ \widetilde{f}_n(\cdot,0,t) }_{\mathcal{H}^{\otimes n}}^2 \ud t
     & = \int_0^\infty e^{-t} \int_{\R^{nd}} \left| \mathcal{F}\widetilde{f_n}(\cdot,0;t)(\xi_1,\cdots,\xi_n) \right|^2 \mu(\ud \vec{\xi})  \ud t                                                             \\
     & = \int_0^\infty e^{-2t} \int_{\R^{nd}} \left| \mathcal{F}\widetilde{f_n}(\cdot,0;2t)(\xi_1,\cdots,\xi_n) \right|^2 \mu(\ud \vec{\xi}) \: 2\: \ud t                                                     \\
     & = 2^{2n(b+r)}\int_0^\infty 2e^{-2t} \int_{\R^{nd}} \left| \mathcal{F}\widetilde{f_n}(\cdot,0,t)(2^{b/a}\xi_1,\cdots,2^{b/a}\xi_n) \right|^2 \mu(\ud \vec{\xi})   \ud t                                 \\
     & = 2^{2n(b+r)}\int_0^\infty 2e^{-2t} \int_{\R^{nd}} \left| \mathcal{F}\widetilde{f_n}(\cdot,0;t)(\xi_1,\cdots,\xi_n) \right|^2 2^{\frac{-nbd}{a}} 2^{\frac{nb(d-\alpha)}{a}} \mu(\ud \vec{\xi})   \ud t \\
     & = 2^{n(2(b+r) - b\alpha/a)}  \int_0^\infty 2e^{-2t} \int_{\R^{nd}} \left| \mathcal{F}\widetilde{f_n}(\cdot,0;t)(\xi_1,\cdots,\xi_n) \right|^2 \mu(\ud \vec{\xi})   \ud t                               \\
     & = \frac{2^{n(2(b+r) - b\alpha/a)}}{(n!)^2}  \int_0^\infty 2e^{-2t} \int_{\R^{nd}} H_n(t,\vec{x})^2 \ud \vec{x}   \ud t,
	\end{align*}
	where we have applied \eqref{E:greenscale} in the fourth equality. This proves \eqref{E:fHn}.
\end{proof}

\bigskip

{\bf Acknowledgement.} We would like to thank Raluca Balan for some useful comments on our paper.

\addcontentsline{toc}{section}{Bibliography}

\end{document}